\definecolor{myrow}{RGB}{245,245,255}
\global\long\def\d{\mathrm{d}}%
\global\long\def\E{\mathbb{E}}%
\global\long\def\W{\mathcal{W}}%
\global\long\def\X{\mathcal{X}}
\global\long\def\R{\mathbb{R}}%
\global\long\def\P{\mathbb{P}}
\global\long\def\law{\mathcal{L}}
\begin{document}


\RUNAUTHOR{}

\RUNTITLE{Wasserstein‑$p$ Central Limit Theorem Rates: From Local Dependence to Markov Chains}

\TITLE{Wasserstein‑$p$ Central Limit Theorem Rates: From Local Dependence to Markov Chains}

\ARTICLEAUTHORS{%
\AUTHOR{Yixuan Zhang}
\AFF{Department of Industrial \& Systems Engineering, University of Wisconsin-Madison, \EMAIL{yzhang2554@wisc.edu}}

\AUTHOR{Qiaomin Xie}
\AFF{Department of Industrial \& Systems Engineering, University of Wisconsin-Madison, \EMAIL{qiaomin.xie@wisc.edu}}
} 

\ABSTRACT{%
Non-asymptotic central limit theorem (CLT) rates play a central role in modern machine learning and operations research. In this paper, we study CLT rates for
multivariate dependent data in Wasserstein-$p$ ($\mathcal W_p$) distance, for
general $p\ge 1$. We focus on two fundamental dependence structures that commonly arise in practice: locally
dependent sequences and geometrically ergodic Markov chains. In both settings,
we establish the \textit{first optimal} $\mathcal O(n^{-1/2})$ rate in $\mathcal W_1$, as well as
the first $\mathcal W_p$ ($p\ge 2$) CLT rates
under mild moment assumptions, substantially improving the best previously known
bounds in these dependent-data regimes. As an application of our optimal $\mathcal W_1$
rate for locally dependent sequences, we further obtain the first optimal
$\mathcal W_1$-CLT rate for multivariate $U$-statistics.

On the technical side, we derive a tractable auxiliary bound for $\mathcal W_1$ Gaussian
approximation errors that is well suited for studying dependent data. For Markov chains,
we further prove that the regeneration time of the split chain associated with a
geometrically ergodic chain has a geometric tail without assuming strong
aperiodicity or other restrictive conditions. These
tools may be of independent interests and enable our optimal $\mathcal W_1$ rates and underpin our $\mathcal W_p$ ($p\ge 2$) results.
}%




\KEYWORDS{Central limit theorem, Wasserstein distance, Local dependence, Markov chains, Nummelin’s splitting} 

\maketitle


\section{Introduction}

Non-asymptotic Central Limit Theorem (CLT) rates  are important in modern Machine Learning (ML)  and Operations Research (OR), as learning and decision-making are often based on limited data and therefore require quantitative uncertainty guarantees. In particular, CLT rates are essential for obtaining non-asymptotic
error bounds and for enabling statistically valid inference on parameters of
interest in stochastic algorithms and dynamical systems
\citep{hastings1970monte,chen2018gaussian,srikant2024rates,samsonov2024gaussian,wu2025uncertainty,chernozhukov2013gaussian}.

Over the past decade, substantial progress has been made in quantifying the CLT rates through the use of transportation metrics \citep{rio2009upper,gallouet2018regularity,raivc2018multivariate, bonis2020stein, srikant2024rates}. In particular, the Wasserstein-$p$ ($\W_p$) distance \citep{villani2008optimal} between two probability measures $\nu,\mu$ on $\mathbb{R}^d$  is defined as
\begin{equation}\label{eq:wp}
  \mathcal{W}_p(\nu,\mu)
  := \inf_{\gamma\in\Gamma(\nu,\mu)}
  \left( \mathbb{E}_{(X,Y)\sim \gamma}\bigl[\|X-Y\|^p\bigr] \right)^{1/p},\quad \forall p \geq 1,
\end{equation}
where $\Gamma(\nu,\mu)$ denotes the set of couplings of $(\nu,\mu)$. Given a sequence of $\mathbb{R}^d$-valued observations $\{X_i\}_{i=0}^{n-1}$, define the partial sum
$S_n:=\sum_{i=0}^{n-1}X_i$ and the  normalized covariance $\Sigma_n:= n^{-1}\operatorname{Var}(S_n)$. Throughout this section, we assume for convenience that $\E[X_i]=0$ for all $i\in\{0,1,\dots,n-1\}$ and that $\Sigma_n = I_d$.
 CLT rates bound the Gaussian approximation error in $\W_p$:
\begin{equation}\label{eq:cltrate}
  \W_p\big(\mathcal L(\tfrac{S_n}{\sqrt n}), \mathcal N(0,I_d)\big),\quad \text{where } \law(\cdot)\text{ denotes the law of a random variable.}
\end{equation}

When $\{X_i\}_{i=0}^{n-1}$ are independent, a large literature establishes the
 CLT rates~\eqref{eq:cltrate}. In the univariate case ($d=1$), the optimal $\mathcal{O}(n^{-1/2})$ rate in $\W_p$ ($p\ge 1$) has been established under the moment condition $\sup_{i\ge 0}\E|X_i|^{2+p}<\infty$~\citep{petrov2012sums,rio2009upper,bobkov2013entropic}.
In the multivariate case ($d\ge 2$), obtaining the optimal rate is more delicate. In particular,
the regularity-based approach~\citep{gallouet2018regularity} typically leads to an
additional logarithmic factor, producing suboptimal rates of order $\mathcal{O}(n^{-1/2}\log n)$.
Moreover, for $p> 1$, the Wasserstein distance $\W_p$ generally lacks a tractable
dual representation for dimensions $d\ge 2$ \citep{bobkov2024fourier}, which limits the direct use of
Lipschitz-test-function arguments that are available in the univariate setting.

Recent work has made progress on the multivariate case. For $\W_1$, a rate of $\mathcal{O}(n^{-\delta/2})$ for $\delta\in(0,1)$ was established under the moment condition
$\sup_{i\ge 0}\E\|X_i\|^{2+\delta}<\infty$~\citep{gallouet2018regularity}, while the optimal $\mathcal{O}(n^{-1/2})$ rate for $\delta=1$
was obtained via a new Gaussian approximation framework~\citep{raivc2018multivariate}.
For $\W_p$ with $p\ge 2$, the rate
$\mathcal{O}\!\left(n^{-(p+q-2)/(2p)}\right)$ under
$\sup_{i\ge 0}\E\|X_i\|^{p+q}<\infty$ for some $q\in(0,2]$ was proved~\citep{bonis2020stein}.
 In particular, taking $q=2$
gives the optimal $\mathcal{O}(n^{-1/2})$ rate. These results for independent data are summarized in the blue-shaded region of Table~\ref{tab:summary}.

However, in many modern ML and OR applications, the observations are typically dependent rather than independent. In ML, quantities of interest often arise as partial
sums along correlated data streams or recursive algorithmic trajectories. A
particularly important example is stochastic gradient descent (SGD), where the
update takes the form
\[
\theta_{t+1}=\theta_t-\alpha \nabla f(\theta_t,w_t).
\]
Here \(\{w_t\}_{t \geq 0}\) denotes the sampled data or stochastic noise. Even when
\(\{w_t\}_{t \geq 0}\) is independent and identically distributed, the iterate
sequence \(\{\theta_t\}_{t \geq 0}\) itself forms a time-homogeneous
Markov chain
\citep{dieuleveut2020bridging,yu2021analysis,zhang2024prelimit,zhang2025piecewise},
as each update depends only on the current state and fresh randomness.
Therefore, when studying the fluctuations of learning algorithms, the relevant
dependence structure typically lies in the trajectory of the iterates rather than the input data. This perspective extends to other
recursive methods, such as Q-learning and TD-learning in reinforcement learning
\citep{zhang2024constant,huo2023bias}. In OR, analogous dependent-data structures arise naturally in steady-state simulation \citep{law1984confidence} and in the performance analysis of queueing and network models \citep{asmussen2003applied,Meyn2007}. Let $\{x_t\}_{t\ge 0}$ be a Markov chain on a state space $\mathcal X$ with stationary distribution $\pi$, and let $c:X\to\mathbb{R}^d$ be a $\pi$-integrable vector of one-step performance measures. Define
\[
\bar c_n := \frac{1}{n}\sum_{t=0}^{n-1} c(x_t),
\qquad
\theta := \int_{\mathcal X} c(x)\,\pi(dx).
\]
Then $\bar c_n$ is the standard run-length-$n$ sample-average estimator of the steady-state mean-performance vector $\theta$. Consequently, quantitative CLT rates for dependent data
provide a useful framework for understanding finite-sample behaviors of these methods. 

Despite the maturity of the theory for
independent data, corresponding \(\W_p\)-CLT rates under dependence remain largely underexplored. In this paper, we focus on two fundamental dependence
structures that frequently arise in modern ML and OR: (1) locally
dependent data and (2) Markov chains. We review the related literature for
these two settings separately below.

\noindent\textbf{Local Dependence.} Locally dependent data form an important subclass of dependent sequences, in which each
observation depends only on a bounded neighborhood in an underlying dependency graph
\citep{rinott1994normal}; see Section~\ref{sec:local-dependent} for a precise definition.
Such dependence arises naturally in graph-structured learning through network interactions,
for example in node-level prediction with graph neural networks
\citep{kipf2016semi,hamilton2017inductive}. For locally dependent data in the univariate setting, the optimal $\mathcal{O}(n^{-1/2})$
rate in $\W_p$ for all $p\ge 1$ was established~\citep{liu2023wasserstein}.
 In the multivariate setting, under an almost-sure boundedness assumption, recent work~\citep{fang2023p} obtained a rate of
$\mathcal{O}(n^{-1/2}\log n)$ for $p\ge 1$.

To the best of our knowledge, however, CLT rates in $\W_p$ for multivariate locally dependent data under mild moment assumptions remain underexplored for $p\ge 1$.

\noindent\textbf{Markov Chains.} Markov chains provide a unifying framework for modeling stochastic dynamics in
modern ML and OR. In ML, they
underlie sequential decision-making models such as Markov decision processes and
reinforcement learning
\citep{puterman2014markov,sutton1998reinforcement,zhang2024constant}, and they
also appear in generative modeling through diffusion-based Markovian dynamics
\citep{ho2020denoising}. In OR, Markov chains have long been used to model queueing systems
\citep{asmussen2003applied} and inventory systems with Markov-modulated demand
\citep{song1993inventory}. More recently, they have also played an important
role in healthcare operations, where they are used to study hospital patient
flow, discharge decisions, and interhospital transfers under congestion
\citep{shi2021timing,chan2026optimizing}. Markovian models likewise arise in
platform operations, such as ride-hailing and other two-sided marketplaces,
where demand, supply, matching, and pricing evolve sequentially over time
\citep{varma2023dynamic,ben2025data}. Across these settings, Markovian state
dynamics provide a tractable framework for analyzing stability, congestion, and
long-run performance.

Let $\{x_i\}_{i\ge 0}$ be a Markov chain on a general state space $(\mathcal X,\mathcal B)$
with unique stationary distribution $\pi$. Let $\{h_i\}_{i\ge 0}$ be measurable functions
$h_i:\mathcal X\to\mathbb R^d$ satisfying $\E_\pi[h_i]=0$ for all $i\ge 0$.
Then the sequence $\{h_i(x_i)\}_{i\ge 0}$ serves as the dependent analogue of the observations
$\{X_i\}_{i\ge 0}$ considered above. 
Recent work studied geometrically ergodic Markov chains satisfying a drift condition with Lyapunov function $V$~\citep{srikant2024rates}.
 In the homogeneous case ($h_i\equiv h$), they obtained
a rate $\mathcal{O}(n^{-\delta/2})$ under the domination condition $\|h\|^{2+\delta}\le V$ for $\delta\in(0,1)$,
and a rate $\mathcal{O}(n^{-1/2}\log n)$ under $\|h\|^{3}\le V$. Time-inhomogeneous $\{h_i\}_{i\ge 0}$ are considered in follow up work~\citep{wu2025uncertainty}, but only a $\mathcal{O}(n^{-1/2}\log n)$ rate is obtained under
stronger assumptions: the transition kernel has a positive spectral gap, and the functions
$h_i$ are uniformly bounded.
Both results are restricted to $\W_1$ and are suboptimal due to the additional
logarithmic factor; they are summarized in the yellow-shaded region of
Table~\ref{tab:summary}.

In the work by~\cite{srikant2024rates}, the authors highlight several open problems for geometric ergodic Markov chain CLT rates,
including: (i) whether the optimal $\mathcal O(n^{-1/2})$ $\W_1$ rate can
be achieved (under mild conditions), and (ii) what $\W_p$ CLT rates are attainable when $p>1$.

In this work, we establish CLT rates in $\W_p$ ($p\ge 1$)---optimal in some regimes---for
multivariate locally dependent sequences and geometric ergodic Markov chains under mild moment 
conditions. Before stating our contributions, we highlight the main challenges in obtaining
such results.

\begin{table}[h]
\centering
\begin{tabular}{|p{5.4cm}|p{2cm}|p{4cm}|p{3.9cm}|}
\toprule
Data&Metric& Assumptions& Rate\\
\midrule
\rowcolor{myrow}Independence~\citep{gallouet2018regularity,raivc2018multivariate}&$\W_1$&$\sup_{i \geq 0}\E[\|X_i\|^{2+\delta}]<\infty$ ($\delta \in (0,1]$)&$\mathcal O(n^{-\delta/2})$\\\hline
\rowcolor{myrow}Independence~\citep{bonis2020stein}&$\W_p$ ($p \geq 2$)&$\sup_{i \geq 0}\E[\|X_i\|^{p+q}]<\infty$ ($q \in (0,2]$)&$\mathcal O(n^{-\tfrac{p+q-2}{2p}})$\\\hline
\rowcolor{red!8}Local Dependence (Theorem \ref{thm:local-depend-1})&$\W_1$&$\sup_{i \geq 0}\E[\|X_i\|^{2+\delta}]<\infty$ ($\delta \in (0,1]$)&$\mathcal O ( n^{-\delta/2})$\\\hline
\rowcolor{red!8}$M$-Dependence (Theorem \ref{thm:m-depend-p})&$\W_p$ $ (p \geq 2)$&$\sup_{i \geq 0}\E[\|X_i\|^{p+q}]<\infty$ ($q \in (0,2]$)&$\mathcal O (n^{-\frac{p+q-2}{2(2p+q-2)}})$\\\hline
\rowcolor{yellow!8}Geometric Ergodic (GE) Markov Chains~\citep{srikant2024rates}&$\W_1$&$h_i\equiv h$, $\|h\|^{2+\delta} \leq V$  ($\delta \in (0,1]$)&$
    \mathcal{O}(n^{-\delta/2})$ if $\delta \in (0,1)$; $\mathcal{O}(  n^{-1/2}\log n)$ if $\delta =1$\\\hline
\rowcolor{yellow!8}Positive Spectral Gap Markov Chains~\citep{wu2025uncertainty}&$\W_1$&$h_i$ is bounded&$\mathcal{O}(  n^{-1/2}\log n)$\\\hline
\rowcolor{red!8}GE Markov Chains (Theorem \ref{thm:MC-W1})&$\W_1$&$\|h_i\|^{2+\delta} \leq V$  ($\delta >1$)& $\mathcal{O}(  n^{-1/2})$\\\hline
\rowcolor{red!8}GE Markov Chains (Theorem \ref{thm:MC-Wp})&$\W_p$ ($p \geq 2$)&$h_i\equiv h$, $\|h\|^{p+q} \leq V$  ($q \in (0,2]$)&$\mathcal O (n^{-\frac{p+q-2}{2(2p+q-2)}})$\\
\bottomrule
\end{tabular}
\caption{Summary of prior work and our results on CLT rates for independent, dependent and Markovian data. Here, $V$ denotes the Lyapunov function that certifies geometric ergodicity of Markov chain (Assumption~\ref{assumption:markovchain}).
}
\label{tab:summary}
\end{table}
\subsection{Challenges for $\W_p$ CLT Rates with Multivariate Dependent Data
}\label{sec:challenge}
We emphasize that extending $\W_p$ CLT rates ($p\ge 1$) from the univariate setting ($d=1$)
to the multivariate setting ($d\ge 2$) is highly nontrivial, even for independent data.
Two main challenges arise in the multivariate case.

The first challenge arises when one attempts to leverage regularity bounds for
solutions to Stein's equation in order to obtain the \emph{optimal}
$\mathcal{O}(n^{-1/2})$ multivariate CLT rate in $\W_1$. Regularities for
Stein solutions are standard ingredients in Stein's method and have long been
used to derive classical \emph{univariate} CLT rates, for instance, in the
$\W_1$ and Kolmogorov distances \citep{chen2010normal,nourdin2012normal}.
However, a key obstacle emerges when $d\ge 2$. To illustrate this issue, we
recall a regularity lemma from \cite{gallouet2018regularity}; the
notation used below is defined in Section~\ref{sec:notation}.
\begin{lemma}[Proposition 2.2 in \cite{gallouet2018regularity}]\label{lem:regularity} When $d\ge 2$, for any $h\in C^{0,1}$ let $f_h$ denote the solution to the multivariate Stein equation $\Delta f_h(x) - x^{\top}\nabla f_h(x)=h(x)-\E[h(Z)]$ for any $x\in\mathbb{R}^d$, where $Z\sim\mathcal N(0,I)$. Then $f_h\in C^{2,\delta}$ and $[\nabla^2 f_h]_{\operatorname{Lip},\delta} \lesssim \frac{1}{1-\delta}$ for any $\delta\in(0,1)$. \end{lemma}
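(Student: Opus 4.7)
The plan is to use the Ornstein--Uhlenbeck (OU) semigroup representation of the Stein solution. Let $P_t h(x) = \E[h(e^{-t} x + \sqrt{1-e^{-2t}}\, Z)]$ with $Z \sim \mathcal N(0, I_d)$; then a standard calculation shows
\[ f_h(x) = -\int_0^\infty \bigl(P_t h(x) - \E[h(Z)]\bigr)\,dt \]
solves the Stein equation. Exchanging differentiation and the integral reduces the task to controlling the modulus of continuity of $\nabla^2 P_t h$ in $x$ uniformly in $t$ and then integrating in $t$.

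The core step is to derive two complementary estimates on $\nabla^2 P_t h$ via Gaussian integration by parts (IBP), using the smoothing effect of the OU semigroup. Performing IBP once shifts one derivative off $h$ onto the Gaussian factor, yielding
\[ \nabla^2 P_t h(x) = \frac{e^{-2t}}{\sqrt{1-e^{-2t}}}\, \E\bigl[(\nabla h)(W_x) \otimes Z\bigr], \qquad W_x := e^{-t}x + \sqrt{1-e^{-2t}}\, Z, \]
which gives an $L^\infty$ bound of order $e^{-2t}(1-e^{-2t})^{-1/2}\,\|h\|_{\operatorname{Lip}}$. Performing IBP a second time removes the remaining derivative from $h$ to give
\[ \nabla^2 P_t h(x) = \frac{e^{-2t}}{1-e^{-2t}}\, \E\bigl[h(W_x)(ZZ^\top - I)\bigr], \]
from which, after the centering $h(W_x) - h(W_y)$ (valid since $\E[ZZ^\top - I] = 0$), the increment is bounded by $\|h\|_{\operatorname{Lip}} \cdot e^{-t}\|x-y\|$ times the prefactor, producing a Lipschitz-in-$x$ estimate of order $e^{-3t}(1-e^{-2t})^{-1}\,\|h\|_{\operatorname{Lip}}\,\|x-y\|$.

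Combining these bounds via the elementary inequality $\min(A,B) \le A^\delta B^{1-\delta}$ gives
\[ \bigl\|\nabla^2 P_t h(x) - \nabla^2 P_t h(y)\bigr\| \lesssim \frac{e^{-(2+\delta)t}}{(1-e^{-2t})^{(1+\delta)/2}}\, \|h\|_{\operatorname{Lip}}\, \|x-y\|^\delta. \]
Integrating in $t$ via the substitution $u = e^{-2t}$ reduces the remaining integral to the Beta integral $\tfrac{1}{2}\int_0^1 u^{\delta/2}(1-u)^{-(1+\delta)/2}\,du = \tfrac{1}{2}\,B\bigl(1+\delta/2,\, (1-\delta)/2\bigr)$, which blows up precisely like $1/(1-\delta)$ as $\delta \to 1^-$ because $\Gamma(z) \sim 1/z$ near the origin while the other Gamma factors remain bounded. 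The main obstacle---and the reason the interpolation is essential---is the singularity at $t\to 0^+$: the Lipschitz-in-$x$ bound alone carries a factor $(1-e^{-2t})^{-1}$, which is not integrable near zero and would force the $\delta=1$ case to accumulate the spurious logarithmic factor familiar from classical Stein-method bounds. Interpolating against the less singular $L^\infty$ bound of order $(1-e^{-2t})^{-1/2}$ restores integrability for every $\delta<1$ at the controlled cost of the advertised $1/(1-\delta)$ prefactor.
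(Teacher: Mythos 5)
This lemma is imported verbatim from Proposition~2.2 of \cite{gallouet2018regularity}; the paper gives no proof of it, so there is nothing internal to compare against. Your argument is, in substance, the standard proof used in that reference: represent $f_h$ through the Ornstein--Uhlenbeck semigroup, obtain the two Gaussian integration-by-parts representations of $\nabla^2 P_t h$ (one keeping $\nabla h$, giving the $e^{-2t}(1-e^{-2t})^{-1/2}$ sup bound; one removing all derivatives from $h$, giving the $e^{-3t}(1-e^{-2t})^{-1}$ Lipschitz-in-$x$ bound), interpolate via $\min(A,B)\le A^{1-\delta}B^{\delta}$, and integrate in $t$ to hit the Beta integral whose $\Gamma((1-\delta)/2)$ factor produces exactly the $1/(1-\delta)$ blow-up. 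I checked the two representations, the interpolated exponent $e^{-(2+\delta)t}(1-e^{-2t})^{-(1+\delta)/2}$, and the substitution $u=e^{-2t}$; all are correct, and the explanation of why the naive Lipschitz-in-$x$ bound alone fails near $t=0$ is the right diagnosis. Two minor points you should make explicit in a full write-up: (i) differentiation under the $t$-integral needs the integrable-in-$t$ envelopes you derived (they do the job, since $(1-e^{-2t})^{-1/2}$ is integrable near $0$), together with a short verification that the integral formula indeed solves the Stein equation for Lipschitz $h$; (ii) your constants absorb quantities such as $\E\|ZZ^\top-I\|$ and are therefore dimension-dependent, which is weaker than the dimension-free form in \cite{gallouet2018regularity} but fully consistent with this paper's convention that $\lesssim$ hides constants independent of $n$ only.
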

Unlike the univariate case, where an analogous regularity holds even for
$\delta=1$ \citep[Lemma~6]{barbour1986asymptotic}, in the multivariate setting
 $\delta$ must be taken strictly less than one; moreover, it has been shown that
$f_h$ need not belong to $C^{2,1}$ \citep{raivc2004multivariate}.
 For illustration, suppose that the data are i.i.d.\ and satisfy a uniform
$(2+\alpha)$-moment condition for some $\alpha\in(0,1]$.
 As shown in the proof of
\cite[Theorem~1.1]{gallouet2018regularity}, choosing any $\delta\in(0,1)$ with
$\delta\le \alpha$ leads to an error term of the form $n \cdot (n^{-1/2})^{2+\delta} \cdot \frac{1}{1-\delta} \in \mathcal{O}(\frac{n^{-\delta/2}}{1-\delta} )$.  If $\alpha<1$, choosing $\delta=\alpha$ yields the rate $\mathcal O(n^{-\alpha/2})$.
This matches the optimal dependence on $n$ under a finite
$(2+\alpha)$-moment in the univariate case \citep{ibragimov1966accuracy}, demonstrating that Lemma~\ref{lem:regularity} is sufficient in this regime.
However, when $\alpha=1$ (finite third moment),
Lemma~\ref{lem:regularity} forces $\delta<1$, and the optimal choice is
$\delta=1-2/\log n$, which yields the suboptimal
$\mathcal O(n^{-1/2}\log n)$ rate rather than $\mathcal O(n^{-1/2})$. In summary, the first challenge can be stated as follows.
\begin{challenge}\label{challenge:regularity} Relying on Lemma~\ref{lem:regularity} alone precludes achieving the optimal
$\mathcal{O}(n^{-1/2})$ rate.

\end{challenge}
The second challenge concerns multivariate CLT rates in $\W_p$ for $p>1$.
When $p=1$, the Kantorovich--Rubinstein duality $\W_1(\mu,\nu) = \zeta_1(\mu,\nu):= \sup_{f\in C^{0,1}}|\int f\mathrm d\mu-\int f\mathrm d\nu|$ provides a powerful route to bounding $\W_1$ rates via test functions.
In the univariate setting, an analogous comparison is available for $p>1$, namely $\W_p(\mu,\nu) \lesssim \zeta_p(\mu,\nu):=\sup_{f\in C^{p-1,1}}|\int f\mathrm d\mu-\int f\mathrm d\nu|$; see \cite{liu2023wasserstein}. In contrast, when $d\ge 2$, the situation is more subtle.

\begin{challenge}\label{challenge:dual}
Unlike the univariate setting, in high dimensions the Zolotarev distance $\zeta_p$
might not be well defined for $p>1$, and the precise relationship between $\W_p$ and $\zeta_p$
is not fully understood; see \cite{bobkov2024fourier}. Consequently, techniques that control
$\zeta_1$ do not directly yield upper bounds on $\W_p$ when $p>1$ and $d \geq 2$.
\end{challenge}
A natural workaround is to consider the
\emph{sliced} or \emph{max–sliced} Wasserstein distances \(\widetilde \W_p\) built from one–dimensional
projections, and then invoke univariate \(\W_p\) CLTs (e.g. \cite{liu2023wasserstein}). However, for \(p\ge2\), there is no general reverse inequality that controls \(\W_p\) by a function of \(\widetilde \W_p\).
Even for $p=1$, the recent quantified Cramér–Wold inequality \citep{bobkov2024quantified} controls $\W_1$ by a \emph{power} of the max-sliced $\W_1$ with an exponent degrading as $1/d$, leading at best to rates of order $\mathcal{O}(n^{-1/(2d)})$. Such bounds are
too weak in high dimensions.

\noindent\textbf{Resolving Challenges~\ref{challenge:regularity} and~\ref{challenge:dual} in the Independent Case:} Recent work~\citep{raivc2018multivariate} proposed a new framework for Gaussian approximation
(rigorously reviewed in Section~\ref{sec:raivc}) and showed how
it yields the optimal $\mathcal{O}(n^{-1/2})$ rate in $\W_1$ for the independent setting. For $\W_p$ with $p\ge 2$, \cite{bonis2020stein} developed an approach that uses  exchangeable pairs and attains the optimal rate under
the moment conditions summarized in Table~\ref{tab:summary}.

\noindent\textbf{Persistence of Challenges Under Dependence:} We emphasize that Challenges~\ref{challenge:regularity} and~\ref{challenge:dual} persist in
the dependent setting. Specifically, Challenge~\ref{challenge:regularity} remains a bottleneck in obtaining the optimal
$\W_1$ CLT rates for Markov chains: existing techniques~\citep{srikant2024rates,wu2025uncertainty} rely on Lemma~\ref{lem:regularity} and consequently fail to attain the optimal rate. Furthermore, the methods of
\cite{raivc2018multivariate} and \cite{bonis2020stein} do not directly apply beyond the independent case:
the key objects in \cite{raivc2018multivariate} and the exchangeable-pair construction in \cite{bonis2020stein} crucially rely on independence, making them difficult to apply to dependent data directly. 

\subsection{Our Contributions}
Our main contributions are twofold, corresponding to the locally dependent and geometric ergodic Markov chain
settings, which we discuss separately. A roadmap for the main results of this paper is provided at the end of this subsection; see Figure~\ref{fig:roadmap}.

\noindent\textbf{Contributions for Locally Dependent Data:} 
Section~\ref{sec:local-dependent} presents our $\W_p$ CLT rates for locally dependent
data, and Section~\ref{sec:outline-local} outlines the proof techniques.
Specifically, we show the following:
\begin{itemize}
  \item First, we extend the framework of \cite{raivc2018multivariate} by deriving an auxiliary bound on the $\W_1$ Gaussian approximation error for a \emph{broad class of dependent} (not necessarily locally dependent) partial sums $S_n$ (Proposition \ref{prop:newS}). Specifically, the error is
controlled by how much the law of $S_n$ changes when conditioning on a single summand $X_i$.
This bound makes $\W_1$ CLT rates under dependence more tractable and may be of
independent interest; see Section~\ref{sec:ourS}.

\item For \(\W_1\), Proposition~\ref{prop:newS} yields the optimal
\(\mathcal O(n^{-1/2})\) rate for multivariate locally dependent sequences under a finite third-moment assumption, via a block-resampling coupling; see Section~\ref{sec:outline-local-depend-1} for details. More generally, we obtain a rate $\mathcal{O}(n^{-\delta/2})$
under a finite $(2+\delta)$-moment condition for $\delta \in (0,1]$, matching the best-known dependence on $n$ in
the independent case. These results are stated in Theorem~\ref{thm:local-depend-1} and summarized in the first red-shaded row of
Table~\ref{tab:summary}.
\item For \(\W_p\) with \(p \ge 2\), we establish the rate $\mathcal O(n^{-\frac{p+q-2}{2(2p+q-2)}})$ for multivariate \(M\)-dependent sequences
\citep{romano2000more,diananda1955central}, an important subclass of locally dependent data, under a uniform \((p+q)\)-moment bound. The proof proceeds via a big--small block decomposition, which transfers rates for independent data to the \(M\)-dependent setting; see Section~\ref{sec:outline-m-depend-p} for details. To our knowledge, this result provides the
\emph{first} $\W_p$ CLT rate for multivariate $M$-dependent data under mild moment conditions.
As highlighted in Section~\ref{sec:outline-MC-Wp}, this result serves as a key input for our
$\W_p$ ($p\ge 2$) CLT rates for Markov chains. These results are stated in Theorem~\ref{thm:m-depend-p} and summarized in the second red-shaded row of
Table~\ref{tab:summary}.
\item As an application of our optimal $\W_1$ rate for locally dependent sequences, we
obtain the first optimal $\W_1$ CLT rate for multivariate $U$-statistics
\citep{hoeffding1992class}. Existing results typically focus on the univariate
setting or consider different (often stronger) metrics, such as the
hyperrectangle distance. This application is discussed in Section~\ref{sec:U}.

\end{itemize}

\noindent\textbf{Contributions for Markov Chains:} Section~\ref{sec:MC} presents our $\W_p$ CLT rates for geometric ergodic Markov chains, and
Section~\ref{sec:outline-berry} outlines the proof techniques. Specifically, our contributions include:
\begin{itemize}
\item We employ the split-chain construction of \citet{nummelin1978splitting} to establish the CLT rates in both \(\W_1\) and \(\W_p\) for \(p \ge 2\). In particular, we prove that the regeneration time of the split chain associated with a geometrically ergodic Markov chain has a geometric tail (see Lemma~\ref{lem:geom_tail}), without requiring strong aperiodicity or other restrictive assumptions often imposed in the literature. The details are presented
in Section~\ref{sec:splitchain} and may be of independent interest.

    \item For $\W_1$, we resolve the open problem (i) posed in \cite{srikant2024rates} by establishing a $\mathcal O(n^{-1/2})$ $\W_1$ CLT rates for geometric ergodic Markov chain. On the technical side, we bypass the reliance on Lemma~\ref{lem:regularity}
found in prior work \citep{srikant2024rates,wu2025uncertainty} and instead leverage our
Proposition \ref{prop:newS}. Our approach utilizes the split-chain construction to
characterize fundamental properties of the time-reversed chain. We then employ a
combination of forward and backward maximal couplings to localize the effect of
conditioning on a single time point of the chain.
 These results correspond to the third red-shaded row of
Table~\ref{tab:summary} and are presented in Theorem~\ref{thm:MC-W1}.

    \item For $\W_p$ with $p\ge 2$, we resolve open problem (ii) posed in \cite{srikant2024rates}
by establishing a $\W_p$--CLT convergence rate for geometrically ergodic Markov
chains that match the current state-of-the-art $\W_p$--CLT rates for $M$-dependent data.
Our technical approach uses the split-chain construction to obtain a regeneration
decomposition, where successive regenerative blocks form a $1$-dependent
sequence. This reduction enables us to invoke our newly developed $\W_p$ ($p\ge 2$) CLT rates for
$M$-dependent data in the Markovian setting.
 These findings correspond to the fourth red-shaded row of
Table~\ref{tab:summary} and are presented in Theorem~\ref{thm:MC-Wp}.

\end{itemize}
\begin{figure}[htbp]
\centering
\begin{tikzpicture}[
    >=Latex,
    line width=0.95pt,
    font=\small,
    result/.style={
        draw,
        rounded corners=5pt,
        fill=black!7,
        text width=3.2cm,
        minimum height=1.1cm,
        align=center
    },
    lemma/.style={
        draw,
        rounded corners=6pt,
        fill=black!5,
        text width=4.7cm,
        inner sep=6pt,
        align=center
    },
    method/.style={
        font=\footnotesize,
        align=center,
        inner sep=2pt,
        text width=10cm
    },
    flow/.style={-{Latex[length=2.4mm,width=1.8mm]}, semithick},
    support/.style={-{Latex[length=2.2mm,width=1.6mm]}, semithick, dashed}
]

\begin{pgfonlayer}{background}
    \filldraw[rounded corners=8pt, fill=black!3, draw=black!15]
        (0,3.35) rectangle (16,0.35);
    \filldraw[rounded corners=8pt, fill=black!3, draw=black!15]
        (0,-0.4) rectangle (16,-2.60);
\end{pgfonlayer}

\node[anchor=west, font=\bfseries] at (0.45,2.95) {CLT Rates in $\W_1$};
\node[anchor=west, font=\bfseries] at (0.45,-0.8) {CLT Rates in $\W_p$};

\node[result, text width=3.0cm] (aux) at (2.2,1.9)
{Auxiliary Bound\\(Proposition~\ref{prop:newS})};

\node[result] (local) at (14.0,2.6)
{Local Dependence (Theorem~\ref{thm:local-depend-1})};

\node[result] (mcwone) at (14.0,1.2)
{Markov Chains (Theorem~\ref{thm:MC-W1})};

\node[method] (block) at (8,2.85)
{Block-Resampling Coupling (Section~\ref{sec:outline-local-depend-1})};

\node[method] (fbmc) at (8.15,1.4)
{Forward--Backward Maximal Coupling (Section~\ref{sec:outline-MC-W1})};

\coordinate (fork) at (4.4,1.9);

\draw[flow] (aux.east) -- (fork) |- (local.west);
\draw[flow] (aux.east) -- (fork) |- (mcwone.west);

\node[result, text width=2.8cm] (ind) at (2.2,-1.85)
{Independence};

\node[result] (mdep) at (8.1,-1.85)
{$M$-Dependence\\(Theorem~\ref{thm:m-depend-p})};

\node[result] (mcwp) at (14,-1.85)
{Markov Chains\\(Theorem~\ref{thm:MC-Wp})};

\node[method] (bsb) at (5.1,-1.85)
{Big--Small Block\\(Section~\ref{sec:outline-m-depend-p})};

\node[method] (regen) at (11,-1.85)
{Regeneration\\(Section~\ref{sec:outline-MC-Wp})};

\draw[flow] (ind.east) -- (mdep.west);
\draw[flow] (mdep.east) -- (mcwp.west);

\node[lemma] (geom) at (8.15,0)
{Geometric Tails of Split-Chain\\Regeneration (Lemma~\ref{lem:geom_tail})};

\draw[support]
    (geom.north) to (fbmc.south);

\draw[support]
    (geom.south) to[out=-35,in=120] (regen.north);

\end{tikzpicture}
\caption{Results roadmap. Dashed arrows indicate where Lemma~\ref{lem:geom_tail} enters the argument.}
\label{fig:roadmap}
\end{figure}
\subsection{Related Work}
\noindent\textbf{Central Limit Theorem Rates and Probability Metrics.} 
A classical line of work studies Gaussian approximation in \emph{integral probability metrics} \citep{muller1997integral}. In the univariate
case, the Kolmogorov distance admits the celebrated Berry--Esseen bound of order
$\mathcal O(n^{-1/2})$ under a finite third-moment assumption~\citep{petrov2012sums}. In higher dimensions, a common analogue is the
\emph{convex-set distance} $\sup_{A\in\mathcal C}|\P(S_n/\sqrt n\in A)-\P(Z\in A)|$, where $\mathcal{C}$
denotes the class of convex Borel sets in $\R^d$ and
$Z\sim\mathcal N(0,I_d)$. Sharp dimension dependence in Berry--Esseen-type bounds
for convex sets was established by~\cite{bentkus2003dependence}. Another prominent
metric in modern high-dimensional statistics is the \emph{hyperrectangle
distance} \citep{chen2018gaussian,chen2019randomized}, defined as the supremum
over axis-aligned rectangles.

Recently, transport distances have attracted significant attention in CLT theory
\citep{rio2009upper,gallouet2018regularity,raivc2018multivariate,bonis2020stein,srikant2024rates}.
These metrics endow probability laws with the geometry of the underlying space by
interpreting distributional discrepancy as the minimal cost of transporting mass
\citep{villani2008optimal}. This geometric perspective, together with stability
properties \citep{kuhn2019wasserstein} and powerful dual
representations (e.g., against Lipschitz test functions for $\W_1$), makes
transport-based CLT rates both robust and practically meaningful, with broad
applications in modern ML theory \citep{panaretos2019statistical,huo2024collusion}.

\noindent\textbf{Nummelin’s Splitting.} 
Nummelin's splitting construction \citep{nummelin1978splitting} is a foundational
tool for introducing \emph{regeneration} into general state-space Markov chains.
Splitting and regeneration are now standard in the stability and ergodic theory
of Markov chains \citep{Meyn12_book}, and they also underpin modern MCMC analysis \citep{mykland1995regeneration,jones2001honest}.

Beyond asymptotic ergodic results, splitting-based regenerative decompositions
have been widely used to derive non-asymptotic concentration inequalities for
additive functionals of Markov chains:
the works \citep{adamczak2008tail,adamczak2015exponential,bertail2018new} focus on
the strongly aperiodic setting, while \cite{lemanczyk2021general} treats general
$m$-step minorization. Regeneration methods have also been used to obtain
Berry--Esseen-type bounds for Markov chains
\citep{bolthausen1980berry,bolthausen1982berry,douc2008bounds}; however, these
results are largely restricted to the $m=1$ case, the univariate setting, and
metrics such as the Kolmogorov distance. Our work extends the use of split
chains by establishing regeneration properties under general geometric
ergodicity, and by leveraging the resulting block structure to derive
multivariate $\W_p$ ($p\ge 1$) CLT rates.

\subsection{Notation}\label{sec:notation}
We write $\nabla$ and $\Delta$ for the gradient and Laplacian operators on
$\mathbb R^d$, respectively. For $k\in\mathbb N$ and $\alpha\in(0,1]$, the H\"older
class $C^{k,\alpha}$ consists of functions
$f:\mathbb R^d\to\mathbb R$ that are $k$ times continuously differentiable and
whose $k$th derivative is H\"older continuous with exponent $\alpha$, i.e.,
\[
[\nabla^k f]_{\operatorname{Lip},\alpha}
:=\sup_{x\neq y}\frac{\|\nabla^k f(x)-\nabla^k f(y)\|}{\|x-y\|^\alpha}<\infty.
\]
For $L>0$, let $\operatorname{Lip}_L$ denote the class of real-valued $L$-Lipschitz functions on
$\mathbb R^d$, i.e.,
\[
\operatorname{Lip}_L:=\{h:\mathbb R^d\to\mathbb R:\ |h(x)-h(y)|\le L\|x-y\|\ \ \forall x,y\}.
\]
By the Kantorovich--Rubinstein duality, $\W_1(\mu,\nu)=\sup_{h\in \operatorname{Lip}_1}\Bigl|\int h\d\mu-\int h\d\nu\Bigr|.$

For symmetric matrices $A,B$, we write
$A\succeq B$ (resp.\ $A\succ B$) if $A-B$ is positive semidefinite (resp.\
positive definite), and $\lambda_{\min}(A)$ for the smallest eigenvalue of $A$.

For $p\ge1$, $\|Z\|_{L^p}:=(\E\|Z\|^p)^{1/p}$ denotes the $L^p$ norm of a random
vector $Z$, and $L^q(\P)$ denotes the usual space of $q$-integrable random
variables under $\P$. Unless stated otherwise, we use the standard order notations
$\mathcal O(\cdot)$ and $\Omega(\cdot)$ to hide absolute constants that do not
depend on the sample size $n$. We also write $a\lesssim b$ (resp.\ $a\gtrsim b$)
to mean that $a\in\mathcal O(b)$ (resp.\ $a\in\Omega(b)$).

Given a Markov kernel $P$ and a measurable function $V:\mathcal X\to\R$, we write
$PV(x):=\int V(y)P(x,\d y)$. For a probability measure $\nu$ on $\mathcal X$, we use
$\nu(V):=\int V\d\nu$. Given two measurable functions $V,V':\mathcal X\to\R$, we write $V\le V'$ to
denote pointwise domination, i.e., $V(x)\le V'(x)$ for all $x\in\mathcal X$, and
$V\lesssim V'$ to denote $V(x)\le CV'(x)$ for all $x\in\mathcal X$ and some
 constant $C>0$ that do not depends on the sample size $n$. Finally, we adopt the convention that
$\sum_{i=a}^b(\cdot)=0$ whenever $b<a$.

\section{Central Limit Theorems for Locally Dependent Data}\label{sec:local-dependent}
In this section, we focus on locally dependent data~\citep{rinott1994normal}. Let $I=\{0,1,\dots,n-1\}$, and let
$\Gamma=(I,E)$ be a graph with vertex set $I$ and edge set $E$. We say that
$\Gamma$ is a dependence graph for the collection $\{X_i:i\in I\}$ if, for
any disjoint subsets $K,L\subseteq I$ such that there is no edge in $\Gamma$
between $K$ and $L$, the subcollections $\{X_k:k\in K\}$ and $\{X_\ell:\ell\in L\}$
are independent. Write $\deg(i;\Gamma)$ for the degree of vertex $i$ in $\Gamma$, and define $D :=1+\max_{i \in I} \operatorname{deg}(i;\Gamma)$, noting that $D$ may depend on $n$. For $i\in I$, let $\mathcal N[i]$ denote the
closed neighborhood of $i$ in $\Gamma$, namely
\[
\mathcal N[i]:=\{i\}\cup\{j\in I:\ (i,j)\in E\}.
\]
A common subclass of locally dependent data is $M$-dependence (where $M$
may depend on $n$); see, e.g., \cite{diananda1955central,romano2000more}. We say
that $\{X_i:i\in I\}$ is $M$-dependent if the subcollections
$\{X_i:i\in K\}$ and $\{X_j:j\in L\}$ are independent whenever
\[
\min\{|i-j|:\ i\in K,\ j\in L\}>M.
\]
Equivalently, the dependence graph $\Gamma$ has an edge between distinct
vertices $i\neq j$ if and only if $0<|i-j|\le M$. In particular, the maximum
degree satisfies $D\le 1+2M$. The case $M=0$ reduces to the independent
setting (no edges, hence $D=1$).

We consider a collection \(\{X_i : i \in I\}\) of \(\mathbb{R}^d\)-valued random vectors. The same analysis applies to matrix- and tensor-valued random variables by vectorizing each \(X_i\) and modifying the norm geometry accordingly. Without loss of generality, we assume the sequence is centered:
$\E[X_i]=0$ for all $i\in I$. Define
\[
S_n:=\sum_{i=0}^{n-1} X_i,
\qquad
\Sigma_n:=\frac{1}{n}\operatorname{Var}(S_n).
\]
In this section, we focus on the nondegenerate case  where
\(\lambda_{\min}(\Sigma_n) \in \Omega(1)\).

\subsection{CLT Rates in Wasserstein-1 Distance}\label{sec:depend-pequalto1}
We now state a theorem that quantifies the gap between
\(\law\big(\tfrac{S_n}{\sqrt{n}}\big)\) and the Gaussian  \(\mathcal N(0,\Sigma_n)\) in $\W_1$.

\begin{theorem}[Wasserstein--1 CLT Rates for Locally Dependent Data]\label{thm:local-depend-1}
Let $\{X_i\}_{i=0}^{n-1}$ be locally dependent as defined in Section~\ref{sec:local-dependent}. Assume that  $\sup_{0\le i\le n-1}\mathbb{E}\left[\|X_i\|^{2+\delta}\right]\in \mathcal O(1)$ for some $\delta\in(0,1]$. Then
\[
\W_1\big(\mathcal{L}(\tfrac{S_n}{\sqrt{n}}),\mathcal{N}(0,\Sigma_n)\big)
\in\begin{cases}
\mathcal{O} \Big(\frac{\lambda_{\max}(\Sigma_n)^{1/2}}{\lambda_{\min}(\Sigma_n)^{\delta/2}}\cdot D\cdot n^{-\delta/2}\Big) & \text{ if } \delta \in (0,1),\\
\mathcal{O} \Big(\frac{\lambda_{\max}(\Sigma_n)^{1/2}}{\lambda_{\min}(\Sigma_n)^{3/2}}\cdot D^2\cdot n^{-1/2}\Big) & \text{ if } \delta=1.
\end{cases} 
\]
Here, $\mathcal{O}(\cdot)$ hides constants that are independent of $D$ and $n$.
\end{theorem}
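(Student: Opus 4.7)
The plan is to combine the auxiliary $\W_1$ bound of Proposition~\ref{prop:newS} with the local-dependence structure encoded by the graph $\Gamma$. I would first reduce to the isotropic case by setting $Y_i := \Sigma_n^{-1/2} X_i$, so that $\widetilde S_n := \sum_{i=0}^{n-1} Y_i$ satisfies $\var(\widetilde S_n/\sqrt n) = I_d$ while $\{Y_i\}$ inherits the same dependence graph $\Gamma$ and maximum degree $D-1$. Since $x\mapsto \Sigma_n^{1/2} x$ is Lipschitz with constant $\lambda_{\max}(\Sigma_n)^{1/2}$, the Wasserstein distance transforms as
\[
\W_1\bigl(\law(S_n/\sqrt{n}),\,\mathcal N(0,\Sigma_n)\bigr)
\;\le\; \lambda_{\max}(\Sigma_n)^{1/2}\;\W_1\bigl(\law(\widetilde S_n/\sqrt n),\,\mathcal N(0,I_d)\bigr),
\]
while the standardization converts the $(2+\delta)$-moment hypothesis into $\E\|Y_i\|^{2+\delta}\lesssim \lambda_{\min}(\Sigma_n)^{-(2+\delta)/2}$, which is what ultimately produces the Lyapunov-type ratios in the final rate.

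Next I would apply Proposition~\ref{prop:newS} to $\widetilde S_n/\sqrt n$. The point of this proposition is that the $\W_1$ Gaussian approximation error is controlled by a sum over $i$ of terms measuring the change in $\law(\widetilde S_n)$ induced by conditioning on a single summand $Y_i$. Under local dependence this conditioning has sharply localized effect: because $\{Y_j\}_{j\notin \mathcal N[i]}$ is independent of $Y_i$ by the definition of the dependence graph, only the neighborhood sum $\sum_{j\in \mathcal N[i]} Y_j$ is perturbed. The cardinality bound $|\mathcal N[i]|\le D$ is the mechanism through which the degree $D$ enters the final rate.

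For $\delta\in(0,1)$, combining this neighborhood localization with the classical Stein regularity estimate of Lemma~\ref{lem:regularity} bounds the total contribution by $n\cdot D\cdot n^{-(2+\delta)/2}=D\,n^{-\delta/2}$, which together with the eigenvalue factors from the standardization step yields the first branch of the theorem. For $\delta=1$, Lemma~\ref{lem:regularity} is not sharp (Challenge~\ref{challenge:regularity}), so I would rely directly on the internal structure of Proposition~\ref{prop:newS}, unfolding its error estimate through a second layer of conditioning that converts the third-moment-style contribution into a covariance-type expansion of the neighborhood sum $\sum_{j\in\mathcal N[i]} Y_j$; via Cauchy–Schwarz and the uniform third-moment bound, the variance of this sum scales like $D$, producing the extra degree factor that upgrades $D$ to $D^2$ in the second branch.

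The hard part is precisely the $\delta=1$ case: orchestrating the Raič-style error decomposition (Section~\ref{sec:raivc}) together with the dependence-graph bookkeeping so that one attains the optimal $n^{-1/2}$ rate without the logarithmic loss intrinsic to Lemma~\ref{lem:regularity}, while keeping the degree dependence to $D^2$ rather than a higher power. The delicate point is that each closed neighborhood $\mathcal N[i]$ contributes a variance-type error term that must be bounded using conditional independence outside $\mathcal N[i]$, and the sum over $i$ involves overlapping neighborhoods whose combinatorial cost must be tracked carefully to avoid a spurious factor of $D$ or more; the interplay with the $\lambda_{\min}(\Sigma_n)^{-3/2}$ factor from the standardization, which exceeds what a straightforward $(2+\delta)$-moment calculation would predict, is what I expect to require the most care.
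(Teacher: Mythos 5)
Your plan is essentially the paper's proof: for $\delta=1$ it applies Proposition~\ref{prop:newS} with a coupling that fixes $U_i$ and resamples only the block $\mathcal N[i]$ (Cauchy--Schwarz plus uniform third moments giving the $D^2 n^{-1/2}$ term), for $\delta\in(0,1)$ it falls back on the Stein-equation regularity of Lemma~\ref{lem:regularity} exploiting that $\sum_{j\notin\mathcal N[i]}U_j$ is independent of $U_i$, and general $\Sigma_n$ is handled by the same Lipschitz/standardization reduction. One caveat: keep the two regimes cleanly separated---the $\W_2^2$ bound in Proposition~\ref{prop:newS} genuinely requires third-moment control, so for $\delta\in(0,1)$ the argument must run entirely through the Stein decomposition (as in \eqref{eq:threeterm}) rather than ``combining'' the proposition's localization with Lemma~\ref{lem:regularity}.
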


To our knowledge, Theorem \ref{thm:local-depend-1} establishes  the first \emph{optimal} \(W_1\) CLT rates for
multivariate locally dependent data under a finite \((2+\delta)\)-th moment assumption with \(\delta\in(0,1]\). The dependence on $n$ matches the optimal rates known in the i.i.d.\ setting for $\delta\in(0,1)$ \citep{gallouet2018regularity} and the independent case for $\delta=1$ \citep{raivc2018multivariate}.
Furthermore, Theorem~\ref{thm:local-depend-1} extends the univariate results of
\cite{liu2023wasserstein} to the multivariate setting and provides a comprehensive treatment of the full range $\delta\in(0,1]$, whereas
\cite{liu2023wasserstein} focuses only on the case $\delta=1$. The dependence on \(D\) in Theorem \ref{thm:local-depend-1} is the same as that in the univariate case; see \cite{barbour1989central,liu2023wasserstein}.

We remark that in Theorem~\ref{thm:local-depend-1} and all other CLT rate results in this paper, the dependence on the dimension is subsumed into the order notation. The optimal dimension dependence of CLT rates in \(\W_p\) remain poorly understood, even for multivariate independent data. Moreover, currently there is no unified approach to formulating even potentially suboptimal dimension dependence in the independent setting. For instance, when \(p=1\), \cite{gallouet2018regularity} consider a finite-moment framework in which the moment assumptions do not explicitly encode dimension dependence, whereas for \(p\ge 2\), \cite{bonis2020stein} allows the moment terms to depend on the dimension through the distribution of the data. Deriving sharp dimension-dependent bounds is beyond the scope of this paper. We leave this question to future work and discuss several promising directions in Section~\ref{sec:conclusion}.

\cite{raivc2018multivariate} developed an abstract Gaussian approximation framework that achieved the optimal multivariate $\W_1$ CLT rate for $\delta=1$ in the independent
setting. However, this success relies on constructing auxiliary objects specifically tailored to independence, which does not extend directly to dependent data. In this work, we generalize the framework of~\cite{raivc2018multivariate} by introducing novel objects that capture dependent structures, resulting a tractable bound~\eqref{eq:newS} on the $\W_1$ CLT error for general dependent data (Proposition \ref{prop:newS}). Leveraging this bound \eqref{eq:newS}, we
establish the optimal rate $\mathcal{O}(n^{-1/2})$ in Theorem~\ref{thm:local-depend-1} for
$\delta=1$. For $\delta\in(0,1)$, we apply the Stein-equation regularity theory
\citep{gallouet2018regularity} from the i.i.d.\ setting to locally dependent sequences.
An overview of Rai\v{c}'s framework, our construction of new objects, and a proof sketch are provided in
Section~\ref{sec:outline-local}, with full details deferred to Appendix~\ref{sec:local-depend-1}.

\subsection{CLT Rates in Wasserstein-p ($p \geq 2$) Distance}\label{sec:depend-plargethan2}
As discussed in Section~\ref{sec:challenge}, extending multivariate CLT rates in $\W_p$
from the case $p=1$ to $p\ge 2$ is \emph{not} a routine generalization; in particular,
techniques that control $\W_1$ do not directly yield bounds for $\W_p$ when $p\ge 2$.
Bonis \cite[Theorems~4 and~9]{bonis2020stein} developed an exchangeable-pairs framework for
Gaussian approximation that attains optimal multivariate $\W_p$ rates for independent
data with $p\ge 1$. Building on this approach, recent work~\citep{fang2023p} derived $\W_p$ bounds of order $\mathcal{O}(n^{-1/2}\log n)$ for $p\ge 2$ under \emph{bounded} locally dependent data.
Nevertheless, multivariate $\W_p$ ($p\ge 2$) CLT rates for locally dependent sequences
under mild moment assumptions remain poorly understood.

To narrow this gap, we establish the following theorem, which provides multivariate 
$\W_p$ ($p\ge 2$) CLT rates for $M$-dependent data under mild moment conditions.
\begin{theorem}[Wasserstein--p CLT Rates for $M$-Dependent Data]\label{thm:m-depend-p}
Let $\{X_i\}_{i=0}^{n-1}$ be $M$-dependent as defined in Section~\ref{sec:local-dependent}. Assume that $\sup_{0\le i\le n-1}\mathbb{E}\left[\|X_i\|^{p+q}\right]\in \mathcal O(1)$ for some $p \geq 2$ and $q \in(0,2]$. Then
\[
\W_p\big(\mathcal{L}(\tfrac{S_n}{\sqrt{n}}),\mathcal{N}(0,\Sigma_n)\big)
\in \mathcal{O}\Big(
\frac{\lambda_{\max}(\Sigma_n)^{1/2}}{\lambda_{\min}(\Sigma_n)^{1/2}}\cdot
(M+1)^{1+\frac{2-q}{2(2p+q-2)}}\cdot
n^{-\frac{p+q-2}{2(2p+q-2)}}
\Big).
\]
Here, $\mathcal{O}(\cdot)$ hides constants that are independent of $M$ and $n$.
\end{theorem}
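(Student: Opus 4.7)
The plan is to reduce Theorem~\ref{thm:m-depend-p} to the multivariate $\W_p$ CLT for independent data due to \cite{bonis2020stein} via the classical big-block/small-block (``block independence'') device. First, partition $\{0,1,\ldots,n-1\}$ into $K := \lfloor n/(B+M)\rfloor$ alternating big blocks $B_1,\ldots,B_K$ of size $B$ and small blocks $S_1,\ldots,S_K$ of size $M$, where $B \geq M$ is a free parameter to be optimized at the end. Setting $Y_j := \sum_{i\in B_j} X_i$ and $Z_j := \sum_{i\in S_j} X_i$, the sum decomposes as $S_n = T_n + R_n$ with $T_n := \sum_{j=1}^K Y_j$ and $R_n := \sum_{j=1}^K Z_j$. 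Consecutive big blocks are separated by $M$ indices, hence at index-distance $M+1$, so the $M$-dependence hypothesis makes $\{Y_j\}_{j=1}^K$ \emph{mutually independent}; an analogous argument (using $B \geq M$) yields independence of $\{Z_j\}_{j=1}^K$.

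Next, apply the $\W_p$ triangle inequality to bound $\W_p(\law(S_n/\sqrt{n}), \mathcal{N}(0,\Sigma_n))$ by the sum of three pieces: the small-block remainder $\|R_n/\sqrt{n}\|_{L^p}$, the big-block CLT error $\W_p(\law(T_n/\sqrt{n}), \mathcal{N}(0,\Sigma_n^{(T)}))$ with $\Sigma_n^{(T)} := \var(T_n)/n$, and the Gaussian-Gaussian mismatch $\W_p(\mathcal{N}(0,\Sigma_n^{(T)}), \mathcal{N}(0,\Sigma_n))$. For the block-sum moments, I would establish a Rosenthal-type bound for $M$-dependent data by decomposing each $M$-dependent block into $M+1$ interlaced sub-sequences whose consecutive entries lie at distance $M+1$ and are therefore \emph{independent}, then applying the classical Rosenthal inequality within each sub-sequence; this gives $\|Y_j\|_{L^{p+q}} \lesssim \sqrt{B(M+1)}$ and an analogous bound on $\|Z_j\|_{L^{p+q}}$. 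Combined with a second Rosenthal inequality for the independent sum $\sum_j Z_j$, the small-block term satisfies $\|R_n/\sqrt{n}\|_{L^p} \lesssim (M+1)^{b} \cdot B^{-1/2}$ for an explicit exponent $b$. For the big-block term I would invoke \cite[Theorems 4 and 9]{bonis2020stein} applied to the independent $Y_j$'s, after pre-conditioning by $\Sigma_n^{-1/2}$ so that the target covariance is near-identity---this pre-conditioning is what produces the $\lambda_{\max}(\Sigma_n)^{1/2}/\lambda_{\min}(\Sigma_n)^{1/2}$ prefactor appearing in the statement---yielding an error of order $(M+1)^{a} \cdot (B/n)^{(p+q-2)/(2p)}$. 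Finally, the Gaussian-Gaussian term reduces to $\|\Sigma_n - \Sigma_n^{(T)}\|_{\mathrm{op}}$ (up to the same $\lambda_{\min}$ factor), and by $M$-dependence the cross-covariance $\mathrm{Cov}(T_n, R_n)$ is supported only on adjacent big/small-block pairs, so this contribution is of the same order as the remainder and does not dominate.

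Combining these three estimates produces a total bound of the form $(M+1)^a (B/n)^{(p+q-2)/(2p)} + (M+1)^b B^{-1/2}$, and balancing the two summands by choosing $B \propto (M+1)^\alpha\, n^{(p+q-2)/(2p+q-2)}$ with the appropriate $\alpha = \alpha(a,b,p,q)$ yields the claimed rate $n^{-(p+q-2)/(2(2p+q-2))}$ together with the $(2M+1)^{1+(2-q)/(2(2p+q-2))}$ prefactor. \emph{Main obstacle.} The principal technical difficulty is to pin down the exponents $a$ and $b$ sharply enough that the optimization delivers the claimed $M$-power $1 + (2-q)/(2(2p+q-2))$: this requires careful bookkeeping of (i) the interlacing-based Rosenthal bounds, (ii) how Bonis's theorem scales jointly with the $L^{p+q}$-norm of the summands, their variance, and the (non-identity) target covariance, and (iii) the Gaussian-Gaussian step, ensuring that no piece contributes an extra spurious factor of $M+1$. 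Once the $M$-dependence at each stage is tracked to the correct order, the final optimization over $B$ is routine.
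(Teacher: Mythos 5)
Your proposal is essentially the paper's own proof: the paper uses the same big--small block decomposition, Rosenthal-type moment bounds (including the interlacing trick you describe), Bonis's independent-data $\W_p$ CLT for the big blocks, a Powers--St\o rmer bound for the Gaussian covariance mismatch, and optimization of the big-block length $\ell \asymp (2M+1)^{\frac{2p}{2p+q-2}} n^{\frac{p+q-2}{2p+q-2}}$, which reproduces exactly the claimed rate and $(2M+1)$-prefactor. The only detail to add is the leftover segment of up to $\ell+M$ indices beyond the last block pair (your decomposition $S_n=T_n+R_n$ is not exhaustive as written); the paper absorbs it into the remainder via the same interlacing Rosenthal argument you already propose.
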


We emphasize that, to our knowledge, Theorem~\ref{thm:m-depend-p} provides the \emph{first} multivariate $\W_p$ CLT rates ($p\ge2$) for $M-$dependent data under finite-moment assumptions. We also acknowledge that the bound may not be optimal: when $q=2$ our best dependence on $n$ is $\mathcal{O}(n^{-1/4})$, whereas the common conjectured optimal rate is $\mathcal{O}(n^{-1/2})$; closing this gap in full generality remains open. Nevertheless, Theorem~\ref{thm:m-depend-p} significantly narrows the gap between the $\W_1$ theory and the $\W_p$ ($p\ge2$) regime in high dimensions. Furthermore, it enables the characterization of Markov chain CLT rates under $\W_p$ for $p\ge2$ as we will discuss in Section \ref{sec:outline-MC-Wp}, where existing results for bounded random variables \citep{fang2023p} do not apply. Obtaining optimal $\W_p$ CLT rates for general locally dependent sequences under
finite-moment assumptions remains an important direction for future research. One
promising avenue is to extend the exchangeable-pair framework of
\cite{bonis2020stein}, which is currently tailored to independent data, to
settings with dependence.

We establish Theorem~\ref{thm:m-depend-p} by combining the multivariate CLT rates for
independent data~\citep{bonis2020stein} with a standard big–small block decomposition~\cite[Equation 17.36]{Meyn12_book}. The argument reduces the sum of
$M$-dependent vectors to a sum of independent \emph{big} blocks, while deriving a
precise upper bound for the contribution of the \emph{small} blocks (of length $M$),
together with an optimal choice of the big-block length. An outline of the proof is
given in Section~\ref{sec:outline-m-depend-p}, and full details appear in
Appendix~\ref{sec:m-depend-p}.

\subsection{Applications to U-statistics}\label{sec:U}
In this section, we apply our multivariate  $\W_1$ CLT rates for locally dependent data
(Theorem~\ref{thm:local-depend-1}) to derive $W_1$ CLT rates for $U$-statistics. Let $\{Z_i\}_{i=0}^{n-1}$ be i.i.d.\ random vectors taking values in a measurable space
$(\mathcal Z,\mathcal B(\mathcal Z))$. Fix an integer $r\ge1$, and let
$h:\mathcal Z^{r}\to\R^d$ be a symmetric measurable kernel. Denote the target
\[
\theta := \E[h(Z_0,\dots,Z_{r-1})]\in\R^d .
\]
A natural estimator for $\theta$ is the U-statistic of order $r$ \citep{hoeffding1992class}:
\begin{equation}\label{eq:U}
U_n := \binom{n}{r}^{-1}
\sum_{0\le i_1<\cdots<i_r\le n-1} h\left(Z_{i_1},\ldots,Z_{i_r}\right).
\end{equation}
We emphasize that $U$-statistics have numerous practical applications. Notable examples
include covariance matrix estimation
\citep{dempster1972covariance,bickel2008covariance,bickel2008regularized}
and subbagging/ensemble methods such as random forests
\citep{breiman1996bagging,breiman2001random,mentch2016quantifying}.
Below we present two concrete illustrations.
\begin{example}[\textbf{Sample Covariance}]\label{example:sample-covariance}
Let $\{Z_i\}_{i=0}^{n-1}\subset\R^p$ be i.i.d.\ with mean $\mu:=\E[Z_0]$ and covariance
$\Sigma:=\operatorname{Var}(Z_0)$. Define $\bar Z_n:=\frac{1}{n}\sum_{i=0}^{n-1} Z_i$ and $\hat\Sigma_n:=\frac{1}{n-1}\sum_{i=0}^{n-1}(Z_i-\bar Z_n)(Z_i-\bar Z_n)^\top .$ Then $\E[\hat\Sigma_n]=\Sigma$. Moreover, $\hat\Sigma_n$ is a matrix-valued ($d=p\times p$) $U$-statistic of order
$2$ with symmetric kernel
$h(z,z'):=\frac{1}{2}(z-z')(z-z')^\top$ for $z,z'\in\R^p$, namely,
\[
\hat\Sigma_n
=
\binom{n}{2}^{-1}\sum_{0\le i<j\le n-1} h(Z_i,Z_j).
\]

\end{example}
\begin{example}[\textbf{Subbaging}]\label{example:subbagging}
Consider supervised learning with features \(X\in\mathcal X\) and response \(Y\in\R^d\).
Suppose we observe an independent training sample  \(\{Z_i\}_{i=0}^{n-1} = \{(X_0,Y_0),\ldots,(X_{n-1},Y_{n-1})\}\).
Fix \(r\in\{1,\ldots,n\}\) and a symmetric base learner \(T\) that, given a subsample
\(\{i_1,\ldots,i_r\}\subset\{0,\ldots,n-1\}\), produces a prediction at a target point
\(x^*\), denoted $T_{x^*}\big(Z_{i_1},\dots, Z_{i_r}\big).$ The subbagged predictor \citep{breiman2001random,mentch2016quantifying} averages these
predictions over all \(\binom{n}{r}\) subsamples:
\[
b_n(x^*)
:= \binom{n}{r}^{-1}
   \sum_{0\le i_1<\cdots<i_r\le n-1}
   T_{x^*}\big(Z_{i_1},\dots, Z_{i_r}\big).
\]
Averaging over subsamples typically reduces variance (and hence improves predictive accuracy)
for base learners \citep{breiman1996bagging}.
\end{example}
Without loss of generality, assume $\theta=0$. We focus on the nondegenerate regime in which $\lambda_{\min}\big(\operatorname{Var}(\E[h(Z_0,\dots,Z_{r-1}) \mid Z_0])\big) \in \Omega(1)$. This is a standard assumption in the CLT literature for \(U\)-statistics
\citep{callaert1978berry,gotze1987approximations,chen2018gaussian}.  Under this assumption,
the following result characterizes $\W_1$ CLT rates for $U$-statistics of order $r$.
\begin{corollary}\label{co:U}
Given i.i.d.\ $\{Z_i\}_{i=0}^{n-1}$ as in Section \ref{sec:U} such that $\E[\|h(Z_0, \cdots, Z_{r-1})\|^3]<\infty$. We have
\begin{align*}
&\W_1\Big(\law\big(\tfrac{\sqrt{n}U_n}{r}\big), \mathcal N\big(0, \operatorname{Var}\big(\mathbb{E}[h(Z_0,\ldots,Z_{r-1}) \mid Z_0]\big)\big)\Big) \in \mathcal O (n^{-1/2}).
\end{align*}
\end{corollary}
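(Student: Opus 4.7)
The plan is to apply the classical Hoeffding decomposition to reduce $U_n$ to an i.i.d.\ sum plus a higher-order remainder, and then invoke Theorem~\ref{thm:local-depend-1} in the degenerate case $D=1$ (i.e., independence) on the main linear term. Since $\theta=0$, define the first-order projection $g_1(z):=\E[h(z,Z_1,\dots,Z_{r-1})]$. Hoeffding's decomposition yields
\[
U_n \;=\; \frac{r}{n}\sum_{i=0}^{n-1} g_1(Z_i) \;+\; R_n,
\]
where $R_n$ is a linear combination of degenerate $U$-statistics of orders $k=2,\dots,r$ formed from the higher-order projections of $h$.

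First, I would handle the main i.i.d.\ term. The summands $\{g_1(Z_i)\}_{i=0}^{n-1}$ are i.i.d., zero-mean, and by Jensen's inequality satisfy $\E[\|g_1(Z_0)\|^3]\le \E[\|h(Z_0,\dots,Z_{r-1})\|^3]<\infty$. Their covariance matrix is exactly $\Sigma:=\operatorname{Var}(\E[h(Z_0,\ldots,Z_{r-1}) \mid Z_0])$, whose smallest eigenvalue is bounded below by the nondegeneracy assumption and whose largest eigenvalue is bounded above by $\E[\|h\|^2]<\infty$. Applying Theorem~\ref{thm:local-depend-1} with $D=1$ and $\delta=1$ therefore yields
\[
\W_1\!\left(\law\!\Big(\tfrac{1}{\sqrt{n}}\sum_{i=0}^{n-1} g_1(Z_i)\Big),\, \mathcal N(0,\Sigma)\right) \in \mathcal O(n^{-1/2}).
\]

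Next, I would bound the remainder. By the mutual orthogonality of the Hoeffding projections and the classical variance estimate $\E\|U_n^{(k)}\|^2 \in \mathcal O(n^{-k})$ for the degenerate $U$-statistic built from the order-$k$ projection, summing over $k=2,\dots,r$ yields $\E[\|R_n\|^2]\in\mathcal O(n^{-2})$. Since $\W_1(\law(X),\law(Y))\le \E[\|X-Y\|]$ for any joint law of $(X,Y)$,
\[
\W_1\!\left(\law\!\big(\tfrac{\sqrt{n}U_n}{r}\big),\, \law\!\Big(\tfrac{1}{\sqrt{n}}\sum_{i=0}^{n-1} g_1(Z_i)\Big)\right) \;\le\; \frac{\sqrt{n}}{r}\,\sqrt{\E[\|R_n\|^2]} \;\in\; \mathcal O(n^{-1/2}),
\]
and the triangle inequality combined with the previous display completes the argument.

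The main obstacle is verifying the $\mathcal{O}(n^{-2})$ control on $\E[\|R_n\|^2]$ in the multivariate setting under only the stated finite third-moment assumption on $h$. Although the univariate analogue is classical, some care is needed because the projections are $\R^d$-valued; I would handle this coordinatewise, exploiting the pairwise orthogonality of the Hoeffding projections together with the standard variance bound $\E\|U_n^{(k)}\|^2\lesssim n^{-k}\,\E[\|h\|^2]$ for each $k\ge 2$, which is controlled by $\E[\|h\|^3]<\infty$ via $L^2\subset L^3$.
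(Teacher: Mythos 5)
Your proof is correct, but it takes a genuinely different route from the paper. The paper applies Theorem~\ref{thm:local-depend-1} directly to the full collection $\{h(Z_{i_1},\dots,Z_{i_r})\}$ of $\binom{n}{r}$ kernel evaluations, viewed as a locally dependent family with dependency degree $D\asymp n^{r-1}$: it verifies $\lambda_{\min}$ and $\lambda_{\max}$ of $\operatorname{Var}\bigl(\binom{n}{r}^{-1/2}W\bigr)$ are both $\Theta(n^{r-1})$, obtains the rate $n^{r/2-1}$ before rescaling, and then must correct the covariance mismatch between $n\binom{n}{r}^{-2}\operatorname{Var}(W)$ and $r^2\operatorname{Var}(\E[h\mid Z_0])$ via a combinatorial estimate (Lemma~\ref{lem:com}) and a Gaussian--Gaussian $\W_1$ comparison. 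You instead use the Hoeffding (H\'ajek) projection: the linear term $\frac{r}{n}\sum_i g_1(Z_i)$ is an i.i.d.\ sum whose covariance is \emph{exactly} the target $\Sigma=\operatorname{Var}(\E[h\mid Z_0])$, so Theorem~\ref{thm:local-depend-1} is only invoked in the trivial case $D=1$, $\delta=1$ (with $\E\|g_1(Z_0)\|^3<\infty$ by conditional Jensen and the eigenvalue conditions from nondegeneracy), and the remainder is handled by the orthogonality of Hoeffding projections plus the standard degenerate-$U$-statistic bound $\E\|U_n^{(k)}\|^2\lesssim n^{-k}$ for $k\ge 2$ (valid coordinatewise in $\R^d$ and needing only $\E\|h\|^2<\infty$), giving $\E\|R_n\|^2\in\mathcal O(n^{-2})$ and hence an $\mathcal O(n^{-1/2})$ coupling bound after multiplying by $\sqrt n$. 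Your route is more classical and elementary, completely sidesteps the covariance-matching step, and makes the eigenvalue bookkeeping trivial; the paper's route, by contrast, genuinely exercises the local-dependence theorem with growing degree (which is the point of the application) and avoids importing the degenerate-$U$-statistic machinery. One minor slip: the inclusion you want is $L^3\subset L^2$ on a probability space (finite third moment implies finite second moment), not ``$L^2\subset L^3$''; this does not affect the argument.
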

To the best of our knowledge, Corollary~\ref{co:U} gives the first
\(\W_1\) CLT rate for multivariate \(U\)-statistics of order \(r\).  In contrast, much of the existing literature
focuses on Gaussian approximation in other stronger metrics, such as the
Kolmogorov distance or the hyperrectangle distance, including the univariate
result~\citep{callaert1978berry} and high-dimensional extensions~\citep{gotze1987approximations,chen2018gaussian}.
 Related work on locally dependent data
established univariate $\W_p$ CLT rates and discussed applications to $U$-statistics
\citep{liu2023wasserstein,fang2019wasserstein}, but these approaches do not extend 
to the multivariate setting, as explained in Section~\ref{sec:challenge}. The proof of Corollary~\ref{co:U} follows from Theorem~\ref{thm:local-depend-1} and is
deferred to Appendix~\ref{sec:proof-U}.

\section{Technical Overview of CLT Rates under Local and 
$M$-Dependence}\label{sec:outline-local}
This section is organized as follows. Section~\ref{sec:raivc} reviews the Gaussian
approximation framework of \cite{raivc2018multivariate}, which yields optimal multivariate
$\W_1$ CLT rates in the independent setting. Section~\ref{sec:ourS} extends this framework by deriving a tractable auxiliary bound on the
$\W_1$ CLT error for general dependent data.
 Sections
\ref{sec:outline-local-depend-1} and~\ref{sec:outline-m-depend-p} then outline the proofs of
Theorems~\ref{thm:local-depend-1} and~\ref{thm:m-depend-p}, respectively. Throughout, we
restrict to the normalization $\Sigma_n:=n^{-1}\operatorname{Var}(S_n)=I_d$; the general nondegenerate
case $\lambda_{\min}(\Sigma_n)=\Omega(1)$ follows by a standard linear-algebra reduction,
as detailed in Appendix~\ref{sec:local-depend-1}.

\subsection{Rai\v{c}'s Framework for Gaussian Approximation}\label{sec:raivc}
The work by \cite{raivc2018multivariate} presents a general multivariate Stein framework for Gaussian approximation. Specifically, let \(W\in\mathbb{R}^d\) satisfy \(\mathbb{E}\|W\|^2<\infty\), \(\mathbb{E}W=0\), and \(\operatorname{Var}(W)=I_d\), and let \(Z\sim\mathcal N(0,I_d)\). For test functions \(f:\mathbb{R}^d\to\mathbb{R}\) in the Hölder class \(\mathcal C^{0,1}\), the framework provides explicit bounds on \(|\mathbb{E}f(W)-\mathbb{E}f(Z)|\). The argument is summarized in three steps with suitable constructions of auxiliary objects:

\paragraph{Step 1:  Generalized size-biased transformation.} \textit{Construct} a measurable index space $\Xi$, a family of random vectors
$\{V_\xi:\xi\in\Xi\}$, and a vector measure $\mu$ on $\Xi$ such that
\[
\mathbb{E}[f(W)W]=\int_{\Xi}\mathbb{E}_\xi[f(V_\xi)]\mu(\d\xi).
\]
The above identity generalizes classical size-biased representations
\citep{baldi1989normal} and their multivariate extensions
\citep{goldstein1996multivariate}, which are standard tools for obtaining
quantitative CLT rates \citep{chen2010normal}.

\paragraph{Step 2: Interpolation.}
For each $\xi\in\Xi$, \emph{interpolate explicitly} along the straight line from $V_\xi$ to $W$:
\[
W_s^{(\xi)}:=(1-s)V_\xi+sW,\qquad s\in[0,1],\qquad Z_\xi:=W-V_\xi.
\]
\textit{Construct} a measurable selector
$\psi:\Xi\times[0,1]\times\mathbb{R}^d\to\Xi$ and define a $\mathbb{R}^d$–valued vector measure
\[
\nu_\xi(B):=\int_0^1\int_{\mathbb{R}^d}
\mathbf{1}\{\psi(\xi,s,z)\in B\} z
\law(Z_\xi)(\d z)\d s,\qquad \forall B\in\mathcal{B}(\Xi).
\]
The selector \(\psi\) is chosen such that the conditional laws match:
\begin{equation}\label{eq:stablility}
    \law\big(W_s^{(\xi)}\big|Z_\xi=z\big)=\law\big(V_{\psi(\xi,s,z)}\big),
\quad \forall s\in[0,1] \text{ and }\law(Z_\xi)\text{-a.e.\ }z\in\mathbb{R}^d
\end{equation}
Then, we obtain:
\[
\mathbb{E}f(W)-\mathbb{E}_\xi f(V_\xi)=\int_{\Xi}\big\langle \mathbb{E}_\eta[\nabla f(V_\eta)],\nu_\xi(\d\eta)\big\rangle.
\]
Consequently, the discrepancy between the laws of $W$ and $V_\xi$ is captured by the
vector measure $\nu_\xi$.

\paragraph{Step 3: Quantitative bounds.} By introducing some \(\beta\)-functionals, \cite[Theorem~2.9]{raivc2018multivariate}
provides quantitative bounds comparing \(W\) with the standard Gaussian
\(Z\sim\mathcal N(0,I_d)\):

\begin{lemma}[Theorem 2.9 of \cite{raivc2018multivariate}]\label{lem:raivc}
Consider the objects \(\mathcal S = (\Xi,\{V_\xi\}_{\xi\in\Xi},\mu,\psi,\{\nu_\xi\}_{\xi\in\Xi})\) constructed in
Steps~1–2.
Define the quantities
\begin{align*}
&\beta_1^{(\xi)}  :=|\nu_{\xi}|(\Xi),  \qquad \beta_2  :=\int_{\Xi} \beta_1^{(\xi)}|\mu|(\mathrm{d} \xi), 
\qquad \beta_2^{(\xi)}  :=\int_{\Xi} \beta_1^{(\eta)}|\nu_{\xi}|(\mathrm{d} \eta), \\
&\beta_{123}^{(\xi)}(a, b, c)  :=\int_{\Xi} \min \left\{a, b \beta_1^{(\eta)}+c \sqrt{\beta_2^{(\eta)}}\right\}|\nu_{\xi}|(\mathrm{d} \eta),\quad  \beta_{234}(a, b, c)  :=\int_{\Xi} \beta_{123}^{(\xi)}(a, b, c)|\mu|(\mathrm{d} \xi).
\end{align*}
If \(\beta_2<\infty\), then for any \(f:\mathbb R^d\to\mathbb R\),
\begin{align*}
 |\mathbb{E}[f(W)]-\E[f(Z)]| \leq \beta_{234}(1.8,3.58+0.55 \log d, 3.5) [f]_{\operatorname{Lip},1} .
\end{align*}
\end{lemma}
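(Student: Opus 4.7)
The plan is to use Stein's method, with the Ornstein--Uhlenbeck Stein equation as the bridge between $W$ and $Z$, and with the Step~1 size-biased identity and the Step~2 interpolation identity serving as the mechanism that converts the resulting Stein error into the $\beta$-functionals. By rescaling it suffices to treat $f$ with $[f]_{\operatorname{Lip},1}\le 1$, and by a Gaussian-convolution smoothing $f*\varphi_t$ with $t\downarrow 0$ at the end, it further suffices to consider smooth $f$. Let $g$ solve the multivariate Stein equation $\Delta g(x) - x^\top \nabla g(x) = f(x) - \mathbb{E}f(Z)$. Refined regularity estimates obtained from the Ornstein--Uhlenbeck semigroup representation $g(x) = -\int_0^\infty \mathbb{E}[f(e^{-t}x + \sqrt{1-e^{-2t}}Z) - \mathbb{E}f(Z)]\,\d t$ give $[\nabla g]_{\operatorname{Lip},1}\le 1.8$, $\|\nabla^2 g\|_\infty \le 3.58 + 0.55\log d$, and a H\"older-type bound with constant $3.5$; these are precisely the three constants appearing in the statement.

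Evaluating the Stein equation at $W$ and taking expectation gives
\[
\mathbb{E} f(W) - \mathbb{E} f(Z) \;=\; \mathbb{E}\Delta g(W) - \mathbb{E}[W^\top \nabla g(W)].
\]
I would apply the Step~1 identity componentwise with $\partial_i g$ in place of $f$ to rewrite the second term as $\int_\Xi \langle \mathbb{E}\nabla g(V_\xi),\mu(\d\xi)\rangle$. For the first term, applying Step~1 to the linear test functions $x\mapsto x_j$ together with $\operatorname{Var}(W)=I_d$ yields the matrix identity $\int_\Xi \mathbb{E}[V_\xi]\,\mu(\d\xi)^\top = I_d$, which allows $\mathbb{E}\Delta g(W) = \operatorname{tr}(\mathbb{E}\nabla^2 g(W))$ to also be rewritten as an integral against $\mu$. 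Combining the two and using a fundamental-theorem-of-calculus expansion $\nabla g(W)-\nabla g(V_\xi) = \int_0^1 \nabla^2 g(W_s^{(\xi)}) Z_\xi\,\d s$ along the interpolation path of Step~2 produces a representation of $\mathbb{E} f(W)-\mathbb{E} f(Z)$ as an integral over $\xi\in\Xi$ whose integrand is quadratic in $\nabla^2 g$ evaluated along $W_s^{(\xi)}$.

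To convert this representation into $\beta_{234}$, I would bound each integrand in two complementary ways and take the pointwise minimum inside the eventual $|\nu_\xi|$-integration. A \emph{crude} bound applies only $[\nabla g]_{\operatorname{Lip},1}\le 1.8$ to a single gradient difference, producing the constant $a=1.8$. A \emph{refined} bound uses Step~2 once more to re-express the Hessian term via $\nu_\xi$: the piece controlled by $\|\nabla^2 g\|_\infty$ produces a term of size $b\,\beta_1^{(\eta)}$ with $b=3.58+0.55\log d$, while the residual H\"older fluctuation is handled by Cauchy--Schwarz against the second-moment quantity $\beta_2^{(\eta)}$, yielding $c\sqrt{\beta_2^{(\eta)}}$ with $c=3.5$. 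Taking the pointwise minimum inside the $|\nu_\xi|$-integral gives $\beta_{123}^{(\xi)}(a,b,c)$, and integrating against $|\mu|$ gives $\beta_{234}(a,b,c)$; the factor $[f]_{\operatorname{Lip},1}$ enters linearly through the regularity estimates for $g$.

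The main technical obstacle is the re-indexing step that converts a remainder of the form $\mathbb{E}[\nabla^2 g(W_s^{(\xi)}) Z_\xi]$ into an integral against $\nu_\xi$: this is precisely where the selector $\psi$ from Step~2, with its matching-of-conditional-laws property $\law(W_s^{(\xi)}\mid Z_\xi=z)=\law(V_{\psi(\xi,s,z)})$, becomes indispensable, because it lets each evaluation along the interpolation path be rewritten as an evaluation at some $V_\eta$ and then integrated against $\nu_\xi(\d\eta)$. Obtaining the sharp $\log d$ dependence in $b = 3.58+0.55\log d$ requires the refined semigroup bound for $\|\nabla^2 g\|_\infty$ rather than the off-the-shelf Lemma~\ref{lem:regularity}, and the careful accounting of which norm (Lipschitz, $L^\infty$, or variance-weighted) to use at each stage is what produces the final minimum structure inside $\beta_{123}^{(\xi)}$.
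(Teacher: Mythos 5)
This lemma is not proved in the paper at all: it is imported verbatim as Theorem~2.9 of \cite{raivc2018multivariate}, and the paper only summarizes its mechanism (Gaussian smoothing of the test function, with second- \emph{and fourth-}order derivative bounds for the smoothed function). Measured against that mechanism, your sketch has a genuine gap at exactly the step the framework exists to avoid. Your ``refined'' bound extracts the term $c\sqrt{\beta_2^{(\eta)}}$ by Cauchy--Schwarz against a ``residual H\"older fluctuation'' of $\nabla^2 g$, where $g$ is the \emph{unsmoothed} Ornstein--Uhlenbeck Stein solution for a Lipschitz $f$. That requires exponent-$1$ control of the modulus of continuity of $\nabla^2 g$ (equivalently, third-derivative-type control) with an absolute constant, and this is precisely what fails in dimension $d\ge 2$: as recorded in Lemma~\ref{lem:regularity} and Challenge~\ref{challenge:regularity}, one only has $[\nabla^2 f_h]_{\operatorname{Lip},\delta}\lesssim 1/(1-\delta)$ for $\delta<1$, and $f_h\notin C^{2,1}$ in general. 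Your attributions of the constants are also off, and internally inconsistent: if $[\nabla g]_{\operatorname{Lip},1}\le 1.8$ then $\|\nabla^2 g\|_\infty\le 1.8$, so a separate bound $3.58+0.55\log d$ for $\|\nabla^2 g\|_\infty$ cannot be where the $\log d$ comes from.

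The missing idea is that in Rai\v{c}'s argument the Gaussian smoothing is \emph{not} a removable technicality sent to zero at the end, as you use it ($f*\varphi_t$, $t\downarrow 0$, merely to assume smoothness). The smoothing bandwidth is kept positive and balanced against the $\beta$-functionals: the smoothed test function (equivalently, its Stein solution) admits uniform second- and fourth-order derivative bounds whose blow-up in the bandwidth is logarithmic, and optimizing that trade-off is what simultaneously produces the pointwise minimum $\min\{a,\,b\beta_1^{(\eta)}+c\sqrt{\beta_2^{(\eta)}}\}$ inside the $|\nu_\xi|$-integral and the $\log d$ in $b=3.58+0.55\log d$. Your re-indexing step via the selector $\psi$ and the identity $\law(W_s^{(\xi)}\mid Z_\xi=z)=\law(V_{\psi(\xi,s,z)})$ is the right way to convert interpolation remainders into integrals against $\nu_\xi$, and the Step~1 manipulations (including $\int_\Xi \E[V_\xi]\,\mu(\d\xi)^\top=I_d$ from linear test functions) are fine; but without the quantitative smoothing step the chain of estimates cannot close with absolute constants, and following your route one would instead land in the $1/(1-\delta)$ regime that yields the suboptimal $\log n$ losses this lemma was introduced to eliminate.
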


Through Steps~1 and~2, this framework overcomes Challenge~\ref{challenge:regularity} by
introducing a Gaussian smoothing of $f$.  This smoothing bypasses the need for a bounded third derivative; instead, the
analysis relies on second- and fourth-order derivative bounds for the smoothed
test function. This mechanism is made explicit in the proof of
Lemma~\ref{lem:raivc}; we refer further details to \cite{raivc2018multivariate}. This framework is instrumental in achieving the optimal $\mathcal{O}(n^{-1/2})$
CLT rate for \emph{independent} data under a finite third-moment assumption
\citep{raivc2018multivariate}. By contrast, approaches based on regularity bounds
for Stein's equation (e.g., \cite{gallouet2018regularity}) typically yield only
the suboptimal $\mathcal{O}\bigl(n^{-1/2}\log n\bigr)$ rate; see
Section~\ref{sec:challenge}.

\subsection{Constructing Object $\mathcal S$ for Dependent Data and A Tractable Auxiliary Bound}\label{sec:ourS} 

The $\W_1$ CLT rates for multivariate \emph{independent} data were established by \cite{raivc2018multivariate} via an explicit construction of the objects
\(
\mathcal S = (\Xi,\{V_\xi\}_{\xi\in\Xi},\mu,\psi,\{\nu_\xi\}_{\xi\in\Xi})
\).
However, this specific construction is restricted to the independent setting and 
does not extend to dependent data. In this work, we generalize Rai\v{c}'s approach  by introducing a novel construction of
$\mathcal S$ that faithfully captures the underlying dependence structure. Importantly,
our choice of $\mathcal S$ yields an auxiliary bound on the $\W_1$ CLT rate for general
dependent data, as stated in the following proposition.

\begin{proposition}\label{prop:newS}
Let $\{X_i\}_{i=0}^{n-1}$ be an $\R^d$-valued sequence such that $\sup_{0\le i\le n-1}\E\big[\|X_i\|^2\big] < \infty.$ Define the normalized variables $U_i := X_i/\sqrt{n}$ for $i\in\{0,\dots,n-1\}$ and $W := \sum_{i=0}^{n-1} U_i .$ Assume that $\E[W]=0$ and $\operatorname{Var}(W)=I_d$. Then,
\begin{equation}\label{eq:newS}
\W_1\bigl(\law(W), \mathcal N(0,I_d)\bigr)
\;\lesssim\;
\sum_{i=0}^{n-1}\E\left[
\|U_i\|\W_2^2\bigl(\law(W), \law(W \mid U_i)\bigr)
\right].
\end{equation}
\end{proposition}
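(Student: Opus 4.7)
The plan is to apply Rai\v{c}'s abstract Gaussian approximation framework (Lemma~\ref{lem:raivc}) with a \emph{new} instance of the object $\mathcal S = (\Xi, \{V_\xi\}_{\xi\in\Xi}, \mu, \psi, \{\nu_\xi\}_{\xi\in\Xi})$ tailored to the dependent setting. The key idea is to parameterize $V_\xi$ by both a time index $i\in\{0,\dots,n-1\}$ and a value $u\in\R^d$, and to take the law of $V_\xi$ to match the \emph{conditional} law $\law(W\mid U_i=u)$. This is designed so that $\beta_1^{(\xi)}$ directly controls the $\W_2$ discrepancy between $\law(W)$ and $\law(W\mid U_i)$ — precisely the quantity appearing on the right-hand side of \eqref{eq:newS}.

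Concretely, I would set $\Xi:=\{0,\dots,n-1\}\times\R^d$, realize $V_{(i,u)}$ via an optimal $\W_2$ coupling of $\law(W)$ and $\law(W\mid U_i=u)$ (so that $Z_{(i,u)}:=W-V_{(i,u)}$ satisfies $\|Z_{(i,u)}\|_{L^2}=\W_2(\law(W),\law(W\mid U_i=u))$), and define the $\R^d$-valued measure $\mu(\{i\}\times\dd u):=u\,\law(U_i)(\dd u)$. The Step~1 identity of Rai\v{c}'s framework is then immediate: for any test function $f$,
\[
\int_{\Xi}\E[f(V_\xi)]\,\mu(\dd\xi)=\sum_{i=0}^{n-1}\int_{\R^d}\E[f(W)\mid U_i=u]\,u\,\law(U_i)(\dd u)=\E[W f(W)].
\]
For Step~2 I would build the selector $\psi(\xi,s,z)$ by disintegrating the joint law of $(V_\xi,Z_\xi)$ and matching $\law((1-s)V_\xi+sW\mid Z_\xi=z)$ to an index-point in the $V$-family, enlarging $\Xi$ (e.g.\ to carry interpolation parameters $(s,z)$) if necessary so that the parameterization is expressive enough to realize all such shifted conditional laws.

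The technical core is to bound the $\beta$-functionals and convert them into the target form $\sum_i\E[\|U_i\|\,\W_2^2(\cdot)]$. From the total-variation estimate $\beta_1^{(\xi)}\le\E\|Z_\xi\|\le\W_2(\law(W),\law(W\mid U_i=u))$ I immediately obtain $\beta_2\le\sum_i\E[\|U_i\|\,\W_2(\law(W),\law(W\mid U_i))]$, which is only a first-order version of \eqref{eq:newS}. To upgrade this first power of $\W_2$ into the desired $\W_2^2$, I would exploit the nonlinear $c\sqrt{\beta_2^{(\eta)}}$ term in the min defining $\beta_{123}^{(\xi)}$, bound $\min\{a,b\beta_1^{(\eta)}+c\sqrt{\beta_2^{(\eta)}}\}\le c\sqrt{\beta_2^{(\eta)}}$, and combine the resulting $\sqrt{\beta_2^{(\eta)}}$ factor with the $\|z\|$-weight inside $|\nu_\xi|(\dd\eta)$ via Cauchy--Schwarz, so that each layer of the nested integral contributes an additional $\W_2$ factor. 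Feeding the resulting bound on $\beta_{234}(1.8,3.58+0.55\log d,3.5)$ into Lemma~\ref{lem:raivc} and taking the supremum over Lipschitz test functions via Kantorovich--Rubinstein duality yields \eqref{eq:newS}.

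The main obstacle I expect is Step~2: constructing a measurable selector $\psi$ so that the conditional laws $\law(W_s^{(\xi)}\mid Z_\xi=z)$ align with the parameterized $V$-family, since these shifted/interpolated conditional laws need not a priori be of the form $\law(W\mid U_i=u)$. The natural resolution is to enlarge $\Xi$ to carry shift and interpolation parameters and set the corresponding $V_\xi$ accordingly; verifying that the Step~1 identity and the $\beta$-bounds are preserved under this enlargement is the delicate part. Once $\psi$ is in place, the remaining computations, while intricate, reduce to careful applications of Cauchy--Schwarz and Fubini combined with the coupling identity $\|Z_\xi\|_{L^2}=\W_2(\law(W),\law(W\mid U_i=u))$.
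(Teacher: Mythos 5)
Your proposal is essentially the paper's own argument: the paper resolves your anticipated Step~2 obstacle exactly as you suggest, by enlarging the index set to $\Xi=I\times\R^d\times[0,1]\times\R^d$ so that $V_{(i,x,t,y)}$ is the interpolated conditional law $(1-t)\widetilde V_{i,x}+tW\mid Y_{i,x}=y$, taking $\mu$ supported on $\{t=0,y=0\}$ with $\mu(\{i\}\times B\times\{0\}\times\{0\})=\E[U_i\mathds 1\{U_i\in B\}]$ and the selector $\psi((i,x,t,y),s,z)=(i,x,t+s(1-t),z/(1-t))$, and the quadratic $\W_2^2$ then emerges just as you sketch, since at the shifted index $\eta=\psi(\xi,s,z)$ both $\beta_1^{(\eta)}$ and $\sqrt{\beta_2^{(\eta)}}$ are $\lesssim\|z\|$ while $|\nu_\xi|$ carries a second $\|z\|$-weight, after which one optimizes over couplings to replace $\E\|W-V_\xi\|^2$ by $\W_2^2(\law(W),\law(W\mid U_i))$. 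One small correction: $\min\{a,\,b\beta_1^{(\eta)}+c\sqrt{\beta_2^{(\eta)}}\}\le c\sqrt{\beta_2^{(\eta)}}$ is false in general (take $\beta_2^{(\eta)}=0$, $\beta_1^{(\eta)}>0$, $a$ large), so keep the full bound $b\beta_1^{(\eta)}+c\sqrt{\beta_2^{(\eta)}}$ as the paper does; both terms scale like $\|z\|$, so the conclusion is unaffected.
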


The inequality \eqref{eq:newS} bounds the CLT rates in $\W_1$ by aggregating, over indices \(i\), a weighted measure of dependence quantifying how much the law of  \(W\) changes when conditioning on a single increment \(U_i\). Therefore, to bound the $\W_1$ CLT error, it suffices to control the right-hand side of
\eqref{eq:newS}, which is typically more tractable since it directly reflects the
underlying dependence structure. Below we outline the proof of
Proposition~\ref{prop:newS} by describing our construction of $\mathcal S$.

\noindent\textbf{Proof Outline of Proposition~\ref{prop:newS}.}
The goal of the construction is to embed the partial sum \(W\) of general dependent data into the abstract framework of Section~\ref{sec:raivc}. To do so, we construct a family \(\{V_\xi\}\) satisfying two key requirements. 
First, it must provide a representation of \(\E[f(W)W]\) in terms of \(\{V_\xi\}\), as required in Step~1 of Section~\ref{sec:raivc}. 
Second, the family must remain closed under the interpolation operation in Step~2 of Section~\ref{sec:raivc}: after interpolating between \(V_\xi\) and \(W\) and conditioning on the residual \(W-V_\xi\), the resulting conditional law should again belong to the same family, indexed by some \(\psi\). 
This closure property is stated precisely in~\eqref{eq:stablility}.

A natural starting point is to pin one coordinate \(U_i\) at a value \(x\) and consider the corresponding conditional sum \(\widetilde V_{i,x}\). This construction yields the desired representation of \(\E[f(W)W]\) by conditioning on \(U_i\). The main difficulty is that, after interpolating between \(\widetilde V_{i,x}\) and \(W\) and then conditioning on the residual \(W-\widetilde V_{i,x}\), the resulting conditional law need not be representable again in the form \(\widetilde V_{i,x}\). To overcome this, we enlarge the parameter space by introducing both an interpolation parameter and a residual parameter, thereby constructing a richer family \(\{V_\xi\}\) that is preserved by this procedure. This is the key idea behind the proof.

Once we construct this admissible object \(\mathcal S\), Lemma~\ref{lem:raivc} becomes applicable, and Proposition~\ref{prop:newS} follows by estimating the associated \(\beta\)-functionals. The details are deferred to Appendix~\ref{sec:proof-newS}.

\subsection{Proof Outline of Theorem \ref{thm:local-depend-1}}\label{sec:outline-local-depend-1}
\noindent\textbf{Case 1: $\delta = 1$.} When $\delta=1$, we leverage Proposition~\ref{prop:newS} to derive the $W_1$ CLT rate.  Specifically, in order to control each term $\E\left[\|U_i\|
\W_2^2\bigl(\law(W),\law(W\mid U_i)\bigr)\right]$ on the right-hand side of \eqref{eq:newS}, we construct an explicit
coupling between $W$ and $W\mid U_i$. In particular, for each $i\in I$ and $x\in\R^d$, we consider the following \emph{block-resampling} coupling:
\[
\widetilde U^{(i,x)}_i:=x,\qquad
\widetilde U^{(i,x)}_{\mathcal N[i]\setminus\{i\}}\sim \law\big(U_{\mathcal N[i]\setminus\{i\}}\mid U_i=x,\ U_{\mathcal N[i]^c}\big),
\qquad
\widetilde U^{(i,x)}_j:=U_j\ (j\notin\mathcal N[i]).
\]
The above coupling fixes the $i$th coordinate at $x$, keeps all coordinates outside its neighborhood unchanged, and only resamples its neighborhood $\mathcal N[i]\setminus\{i\}$ conditional on
$(U_i=x,U_{\mathcal N[i]^c})$.
Crucially, this mechanism \emph{localizes the effect of conditioning to the block $\mathcal N[i]$}. Under the uniform third-moment bound
$\sup_{0\le i\le n-1}\E\|X_i\|^{3}=\mathcal O(1)$ (i.e., $\delta=1$), one can show
that each term $\E\left[\|U_i\|
\W_2^2\bigl(\law(W),\law(W\mid U_i)\bigr)\right]$ is of order $\mathcal O(D^2 n^{-3/2})$. Summing over $i$ then yields the rate
$\mathcal O(D^{2} n^{-1/2})$ in Theorem~\ref{thm:local-depend-1}. The detailed
proof is deferred to Appendix~\ref{sec:proof-local-depend-1-optimal}.

\noindent\textbf{Case 2: $\delta \in (0,1)$.} The extension of Rai\v{c}'s framework to the regime $\delta\in(0,1)$ is not immediate.
 Under the condition $\sup_{0\le i\le n-1}\mathbb{E}\|X_i\|^{2+\delta}\in \mathcal{O}(1)$ for some $\delta\in(0,1)$, we instead invoke the classical regularity theory for the Stein equation \citep{gallouet2018regularity}. Specifically, for any $h\in C^{0,1},$ let $f_h$ denote the solution to the multivariate Stein equation
\[
\Delta f(x)-x\cdot\nabla f(x)=h(x)-\mathbb{E}h(Z), \qquad x\in\mathbb{R}^d.
\]
By setting $W=W_i+U_i$, and employing a second–order Taylor expansion, we have
\[
\mathbb{E}\big[h(W)-h(Z)\big]
=\frac{1}{n}\sum_{i=0}^{n-1} \mathbb{E}\left[\Delta f_h(W)-nU_i^{\top}
\nabla^{2}f_h\left(W_i+\theta U_i\right)U_i\right],
\]
where $\theta$ is uniformly
distributed in $[0,1]$ and independent of everything else. While $f_h\in C^{2,\delta}$ by \cite{gallouet2018regularity}, the locally dependent structure implies that $W_i$ is not independent of $U_i$. Consequently, the argument for independent data~\citep{gallouet2018regularity}  does not apply directly. 
To circumvent this, for each $i \in \{0, \dots, n-1\}$, we define $W_i' = \sum_{j \notin \mathcal N[i]}U_j$ and decompose the above equation into three terms
\begin{equation}\label{eq:threeterm}
\begin{aligned}
\mathbb{E}[h(W)-h(Z)]=&\frac{1}{n} \sum_{i=0}^{n-1} \mathbb{E}\Big[\Delta f_h(W_i'+U_i)-nU_i^T \nabla^2 f_h\big(W_i'+\theta U_i\big) U_i\Big]\\
&+\frac{1}{n} \sum_{i=0}^{n-1}\mathbb{E}\Big[\Delta f_h(W)-\Delta f_h(W_i'+U_i)\Big]\\
&+\sum_{i=0}^{n-1}\mathbb{E}\Big[U_i^T \nabla^2 f_h\big(W_i'+\theta U_i\big) U_i-U_i^T \nabla^2 f_h\big(W_i+\theta U_i\big) U_i\Big].
\end{aligned}
\end{equation}
We then bound the above three terms individually. The first term is analyzed following the approach in \cite{gallouet2018regularity}, as \(W_i'\) is independent of \(U_i\) by construction. The remaining two terms are controlled by leveraging the \(C^{2,\delta}\) regularity of \(f_h\). The detailed proof is deferred to
Appendix~\ref{sec:proof-local-depend-1-other}.

\subsection{Proof Outline of Theorem~\ref{thm:m-depend-p}}\label{sec:outline-m-depend-p} 
We establish the $\W_p$ CLT rates via a standard \emph{big--small block} decomposition~\citep{Meyn12_book}, which approximates the sum of $M$-dependent data by a sum of independent
big blocks, while tightly controlling the contribution of the small blocks (of length $M$) and
optimizing the big-block length.
  Fix an integer $\ell\ge M$ (to be chosen later) and set
\[
k:=\left\lfloor\frac{n}{\ell+M}\right\rfloor.
\]
Define consecutive \emph{big blocks} of length $\ell$
\[
B_j:=\{(j-1)(\ell+M),\dots,(j-1)(\ell+M)+\ell-1\},\qquad j=1,\dots,k,
\]
and \emph{small blocks} of length $M$
\[
G_j:=\{(j-1)(\ell+M)+\ell,\dots,j(\ell+M)-1\},\qquad j=1,\dots,k,
\]
together with the remainder\[
R:=\{k(\ell+M),\dots,n-1\},
\qquad |R|\le \ell+M.
\]
Let
\[
U_j:=\sum_{i\in B_j}X_i,\quad V_j:=\sum_{i\in G_j}X_i,\quad
A:=\sum_{j=1}^k U_j,\quad B:=\sum_{j=1}^k V_j,\quad \Delta:=B+\sum_{i\in R}X_i.
\]
Then $S_n=A+\Delta$ holds exactly. Moreover, since the gap between successive big
blocks satisfies $\min B_j-\max B_{j-1}=M+1>M$, the sequence $\{U_j\}_{j=1}^k$ is
independent; similarly, $\{V_j\}_{j=1}^k$ is independent. We next invoke the following $\W_p$ CLT rates for independent
data to control $\law(A)$.
\begin{lemma}[Theorem 6 of \cite{bonis2020stein}]\label{lem:independent}
Let $\{X_i\}_{i=0}^{n-1}$ be an independent sequence of of $\R^d$-valued random vectors with $\E[X_i]=0$ and $\frac{1}{n}\operatorname{Var}(S_n) = I_d.$ If $\sup_{0 \leq i \leq n-1}\E[\|X_i\|^{p+q}] \in \mathcal O(1) $ for some $p \geq 2$ and $q \in (0,2]$, we have
    \begin{align*}
       \W_p(\law(\tfrac{S_n}{\sqrt{n}}), \mathcal{N}(0, I_d)) \in \mathcal{O}(n^{-\frac{p+q-2}{2p}})  .
    \end{align*}
    \end{lemma}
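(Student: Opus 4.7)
The plan is to apply multivariate Stein's method via an exchangeable pair, extract an associated Stein kernel with controlled moments, and then invoke a transport-type inequality that converts the Stein discrepancy into a $\W_p$ bound.

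First, I would construct the canonical exchangeable pair for independent partial sums. Let $W := S_n/\sqrt n$, draw $I$ uniformly from $\{0,\dots,n-1\}$ and let $X_I'$ be an independent copy of $X_I$; set $W' := W + (X_I' - X_I)/\sqrt n$. Independence of the $X_i$ together with the normalization $\frac{1}{n}\operatorname{Var}(S_n) = I_d$ yields exactly the regression identities
\[
\E[W' - W \mid W] = -\tfrac{1}{n}W, \qquad \E\bigl[(W'-W)(W'-W)^\top\bigr] = \tfrac{2}{n} I_d.
\]
These are the defining ingredients that make Stein's method for Gaussian approximation run.

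Second, I would extract a Stein kernel. Define the matrix-valued map $\tau(W) := \frac{n}{2}\E[(W'-W)(W'-W)^\top \mid W]$. By construction $\E[\tau(W)] = I_d$, and a second-order Taylor expansion of $f(W') - f(W)$ combined with exchangeability shows that for any smooth $f$,
\[
\E\bigl[\langle \tau(W), \nabla^2 f(W)\rangle_{\mathrm{HS}} - \langle W, \nabla f(W)\rangle\bigr] = R_n(f),
\]
with the Taylor remainder satisfying $|R_n(f)| \lesssim [\nabla^2 f]_{\operatorname{Lip},q}\cdot n^{-q/2}\cdot \sup_i \E\|X_i\|^{2+q}$ for any $q \in (0,2]$. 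The Hölder exponent $q$ is what permits only a $(2+q)$-moment assumption. An Efron--Stein-style conditional-variance computation, exploiting that $\tau(W)$ is an average of $n$ independent contributions from the summands, then controls the Stein discrepancy itself: $\E\|\tau(W) - I_d\|_{\mathrm{HS}}^{(p+q)/2} \lesssim n^{1 - (p+q)/2}\cdot \sup_i \E\|X_i\|^{p+q}$.

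Third, I would invoke a transport inequality that converts the Stein discrepancy into a $\W_p$ bound. Such an inequality is the main technical contribution of Bonis' framework; it takes the rough form $\W_p(\law(W), \gamma) \lesssim \|\tau(W) - I_d\|_{\mathrm{HS}, L^r}^{r/p}$ with $r = (p+q)/2$, and is proved by coupling the Ornstein--Uhlenbeck semigroup $(P_t)$ to the Brenier optimal transport map. The idea is to write $\W_p^p(\law(W), \gamma)$ as an integral over $t \in (0,\infty)$ of a Fisher-information-type functional along the OU-smoothed flow $P_t\law(W)$, and then use integration by parts to replace gradients of densities by the Stein kernel. Substituting the Step~2 moment bound gives exactly the claimed rate: $n^{(1 - r)/p} = n^{-(p+q-2)/(2p)}$ after setting $r = (p+q)/2$.

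The main obstacle is the third step: establishing the sharp $\W_p$-to-Stein-discrepancy inequality in dimension $d \geq 2$. As emphasized in Challenge~\ref{challenge:dual}, when $p > 1$ and $d \geq 2$ the Zolotarev distance $\zeta_p$ does not dominate $\W_p$, so a direct duality argument against $C^{p-1,1}$ test functions is not available. Bonis' resolution combines the OU heat-flow representation of $\W_p^p$ with stochastic estimates on derivatives of Brenier potentials, using Burkholder--Davis--Gundy-type inequalities to control martingale increments of $P_{T-t} h(W_t)$. Carrying this through at the fractional Hölder exponent $q \in (0,2]$---rather than at the integer exponent $2$---is the delicate technical heart of the argument, and is precisely what produces the sharp exponent $\frac{p+q-2}{2p}$ in the final rate.
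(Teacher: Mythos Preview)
The paper does not prove this lemma at all: it is stated as an external result, explicitly attributed to \cite{bonis2020stein} (Theorem~6 there), and is simply invoked as a black box in the proof of Theorem~\ref{thm:m-depend-p}. So there is no ``paper's own proof'' to compare against.

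Your sketch is a reasonable high-level outline of Bonis' actual argument. A couple of points of accuracy: Bonis does not proceed via a static Stein kernel $\tau(W)$ followed by a separate transport inequality. Instead, he interpolates $\law(W)$ to $\mathcal N(0,I_d)$ along the Ornstein--Uhlenbeck semigroup and bounds $\frac{d}{dt}\W_p$ directly using the exchangeable pair at each time $t$; the control of moments of $W'-W$ (your Step~2) enters inside this time-derivative estimate rather than as an independent Stein-discrepancy bound. Also, the moment exponent that appears is $p+q$ on $\|X_i\|$ (as stated in the lemma), not just $2+q$ as one line of your sketch suggests. But the overall architecture you describe---exchangeable pair, OU flow, $q$-H\"older Taylor remainder, integration over $t$---is faithful to the source, and your identification of Challenge~\ref{challenge:dual} as the reason this argument is nontrivial for $p>1$, $d\ge 2$ is exactly right.
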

Finally, applying the triangle inequality, together with bounds on $\E\|\Delta\|^{p}$ and on the
discrepancy between $\operatorname{Var}(A)/k$ and $\Sigma_n$, and optimizing the big-block length $\ell$,
yields Theorem~\ref{thm:m-depend-p}.

\noindent\textbf{Remark:} The big--small block technique attains independence by discarding correlations; however, applying the triangle
inequality requires a tradeoff between: (i) the reduced effective sample size (as larger
$\ell$ results in fewer independent big blocks) and (ii) the error induced by the discarded small blocks. Balancing these two
errors typically leads to a rate that is suboptimal compared with the best available rate in the
independent case (Lemma~\ref{lem:independent}). Nevertheless, as noted following Theorem~\ref{thm:m-depend-p}, our results significantly narrow the existing gap between $\W_1$ and $\W_p$ CLT theories. Achieving the optimal $\mathcal{O}(n^{-1/2})$ rate remains an important direction for future research.

\section{Central Limit Theorems for Markov chains }\label{sec:MC}
We consider a Markov chain $\{x_i\}_{i\ge 0}$ with transition kernel $P$ on a general state
space $(\mathcal X,\mathcal B)$, and assume it satisfies the following geometric ergodicity
condition.
\begin{assumption}[Geometric Ergodicity]\label{assumption:markovchain}
The Markov kernel $P$ is $\psi$-irreducible and aperiodic. There exist a petite set $C$,
constants $\lambda\in[0,1)$ and $L<\infty$, and a function $V:\mathcal X\to[1,\infty)$ such
that
\begin{align*}
   PV \leq \lambda V + L\mathbf{1}_{C}. 
\end{align*}
Moreover, the initial distribution $x_0\sim\mu$ satisfies $\E[V(x_0)]<\infty$.
\end{assumption}
Assumption~\ref{assumption:markovchain} is standard in the derivation of CLTs for Markov
chains; see, e.g., \cite{Meyn12_book,douc2008bounds, srikant2024rates}. In this section, we study additive functionals.
Let $h_i:\mathcal X\to\mathbb R^{d}$ satisfy $\E_\pi[h_i]=0$ and $\|h_i\|^{2}\le V$ for all
$i\ge 0$. For $n\ge 1$, define
\[
S_n:= \sum_{i=0}^{n-1} h_i(x_i),
\qquad
\Sigma_n:= \frac{1}{n}\operatorname{Var}(S_n).
\]
We consider the nondegenerate regime where the normalized covariance matrices are uniformly well-conditioned, i.e., $\lambda_{\min}(\Sigma_n) \geq \Omega(1)$. We have the following lemma; its proof is deferred to Appendix~\ref{sec:proof-Sigma-exists}.

\begin{lemma}
\label{lem:Sigma-exists1}
Under Assumption~\ref{assumption:markovchain}, the matrix $\Sigma_n$ is well defined for
every $n\ge 1$, and $\lambda_{\max}(\Sigma_n)=\mathcal O(1)$. Moreover, in the homogeneous
case where $h_i\equiv h$ for all $i\ge 0$, the limit $  \Sigma_\infty:= \lim_{n\to\infty}\frac{1}{n}\operatorname{Var}(S_n)$ exists, and $\|\Sigma_n-\Sigma_\infty\|=\mathcal O(1/n)$.
\end{lemma}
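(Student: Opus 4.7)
The plan is to exploit Assumption~\ref{assumption:markovchain} to obtain geometric decay of the covariances $\operatorname{Cov}(h_i(x_i),h_j(x_j))$ and then sum them up. The first preparatory step is to upgrade the drift from $V$ to $\sqrt V$: Jensen's inequality gives $P\sqrt V\le \sqrt{PV}\le \sqrt{\lambda V+L\mathds{1}_C}\le \sqrt{\lambda}\,\sqrt V+\sqrt{L}\,\mathds{1}_C$, so $\sqrt V$ is itself a Lyapunov function with rate $\sqrt{\lambda}<1$ and the same petite set $C$. Standard Meyn--Tweedie theory (Theorem 16.0.1 of \cite{Meyn12_book}) then yields constants $\rho\in(0,1)$ and $C_0<\infty$ such that $\|P^k(x,\cdot)-\pi\|_{\sqrt V}\le C_0\sqrt{V(x)}\,\rho^k$ for all $k\ge 0$, while iterating the original drift gives $\sup_i \E[V(x_i)]<\infty$. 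Combined with $\|h_i\|^2\le V$, this makes $\operatorname{Var}(S_n)$ well defined.

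For the bound $\lambda_{\max}(\Sigma_n)=\mathcal O(1)$, I would expand $\operatorname{Var}(S_n)=\sum_{i,j}\operatorname{Cov}(h_i(x_i),h_j(x_j))$ and control each block separately. For $i\le j$, the tower property gives $\E[h_i(x_i)h_j(x_j)^\top]=\E\bigl[h_i(x_i)(P^{j-i}h_j)(x_i)^\top\bigr]$. Since $\|h_j(x)\|\le \sqrt{V(x)}$ and $\pi(h_j)=0$, the $\sqrt V$-ergodicity above yields $\|(P^{j-i}h_j)(x)\|\le C_0\sqrt{V(x)}\rho^{j-i}$, so $\|\E[h_i(x_i)h_j(x_j)^\top]\|\le C_0\rho^{j-i}\E[V(x_i)]\le C_1\rho^{j-i}$. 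The mean correction $\E[h_i(x_i)]\E[h_j(x_j)]^\top$ is even smaller, bounded by $C_1\rho^{i+j}$ by applying the same ergodicity to $\mu P^i$. Summing the resulting geometric series over $0\le i,j\le n-1$ gives $\|\operatorname{Var}(S_n)\|\le C_2 n$.

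For the homogeneous case, I set $\gamma_k:=\E_\pi[h(x_0)h(x_k)^\top]$, which satisfies $\|\gamma_k\|\le C_3\rho^k$ by the same argument applied under $\pi$. The natural candidate $\Sigma_\infty:=\gamma_0+\sum_{k\ge 1}(\gamma_k+\gamma_k^\top)$ converges absolutely. Writing $g_k(x):=h(x)(P^k h)(x)^\top$, the bound $\|g_k\|_V\le C_0\rho^k$ combined with $V$-geometric ergodicity gives $\|\E[h(x_i)h(x_j)^\top]-\gamma_{j-i}\|\le C_4\rho^{j-i}\rho^i$ via $\int g_{j-i}\,\d(\mu P^i-\pi)$, whose double sum is $\mathcal O(1)$ and hence contributes $\mathcal O(1/n)$ after dividing by $n$. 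The standard telescoping identity
\[
\frac{1}{n}\sum_{i,j=0}^{n-1}\gamma_{|i-j|}=\gamma_0+\sum_{k=1}^{n-1}\Bigl(1-\tfrac{k}{n}\Bigr)(\gamma_k+\gamma_k^\top),
\]
together with the geometric bound on $\gamma_k$, shows the gap from $\Sigma_\infty$ is $\mathcal O(1/n)$. Combining both contributions yields $\|\Sigma_n-\Sigma_\infty\|=\mathcal O(1/n)$.

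The main obstacle is the mismatch between the moment exponent available for $h$ (only $\sqrt V$ via $\|h\|^2\le V$) and the Lyapunov function $V$: a direct application of $V$-geometric ergodicity would require $\E[V(x_i)^{3/2}]$ bounds that do not follow from the drift. The upgrade to a $\sqrt V$-drift, and the corresponding $\sqrt V$-geometric ergodicity, is exactly what makes the product $\sqrt V\cdot\sqrt V=V$ close the bookkeeping and is the crux of the argument.
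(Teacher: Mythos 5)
Your proposal is correct, but the route differs from the paper's. You bound $\lambda_{\max}(\Sigma_n)$ by directly expanding $\operatorname{Var}(S_n)=\sum_{i,j}\operatorname{Cov}(h_i(x_i),h_j(x_j))$, conditioning on $\mathcal F_i$ to rewrite the off-diagonal blocks as $\E[h_i(x_i)(P^{j-i}h_j)(x_i)^\top]$, and then controlling these via $\sqrt V$-geometric ergodicity and the geometric decay $\|P^k h_j\|\lesssim \sqrt V\,\rho^k$. The paper instead passes through a Poisson-equation/martingale decomposition: it sets $g_t:=\sum_k P^k h_{t+k}$, writes $S_n=g_0(x_0)-Pg_n(x_{n-1})+M_n$ with $M_n$ a martingale, and uses orthogonality of martingale differences together with a $V$-bound on the quadratic variation increments. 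Both proofs hinge on the same core idea you identify as the crux — upgrading the drift to $\sqrt V$ via Jensen so that the $\sqrt V\cdot\sqrt V=V$ bookkeeping closes, and invoking $\sqrt V$-geometric ergodicity (the paper cites Baxendale's Theorem~1.1 for this step, not Theorem~16.0.1 of Meyn--Tweedie, which is a minor citation slip on your part) — so the key analytic input is shared. Your direct covariance approach is more elementary and self-contained here; the paper's martingale decomposition has the advantage that the same objects ($g_t$, $M_n$, the Poisson solution) are reused verbatim in the regeneration-decomposition proof of Theorem~\ref{thm:MC-Wp}, so it amortizes. For the homogeneous case your $\Sigma_\infty=\gamma_0+\sum_{k\ge 1}(\gamma_k+\gamma_k^\top)$ and the paper's $\Sigma_\infty=\E_\pi[m_1m_1^\top]$ coincide by the standard Green--Kubo identity, and your $\mathcal O(1/n)$ rate argument (telescoping the $(1-k/n)$ weights plus a $\|\mu P^i-\pi\|_V$ bound for the initialization mismatch and a $\rho^{i+j}$ bound for the mean-correction) is sound; you should state explicitly that the mean-correction term $\frac1n\sum_{i,j}\E_\mu[h(x_i)]\E_\mu[h(x_j)]^\top$ is also $\mathcal O(1/n)$ in the homogeneous part, since you only addressed it in the $\lambda_{\max}$ part, but it is the same estimate.
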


It therefore suffices to control the distance to $\mathcal N(0,\Sigma_n)$. Indeed, $\W_p(\mathcal N(0, \Sigma_n),\mathcal N(0, \Sigma_\infty)) \lesssim \|\Sigma_n-\Sigma_\infty\|^{1/2} \in \mathcal{O} (n^{-1/2})$ for any
$p\ge 1$, by Lemma~\ref{lem:gauss-gauss}. Since $\mathcal O(n^{-1/2})$ is already the optimal order
for Gaussian approximation, this covariance-mismatch term is of the same (or smaller) order
as the intrinsic CLT error. In particular, by the triangle inequality, we have
\[
  \W_p\bigl(\law(\tfrac{S_n}{\sqrt n}),\mathcal N(0,\Sigma_\infty)\bigr)
  \le
  \W_p\bigl(\law(\tfrac{S_n}{\sqrt n}),\mathcal N(0,\Sigma_n)\bigr)
  + \W_p\bigl(\mathcal N(0,\Sigma_n),\mathcal N(0,\Sigma_\infty)\bigr)\lesssim\W_p\bigl(\law(\tfrac{S_n}{\sqrt n}),\mathcal N(0,\Sigma_n)\bigr).
\]
We establish two theorems characterizing $\W_p$ CLT rates
for Markov chains with $p=1$ and $p\geq 2$.

\subsection{CLT Rates in Wasserstein-1 Distance}
We now state a theorem that quantifies the gap between
\(\law\big(\tfrac{S_n}{\sqrt{n}}\big)\) and the Gaussian  \(\mathcal N(0,\Sigma_n)\) in $\W_1$.
\begin{theorem}[Wasserstein--1 CLT Rates for Markov Chains]
\label{thm:MC-W1}
Under Assumption \ref{assumption:markovchain},  if there exists $\delta>1$ such that $\|h_i\|^{2+\delta} \leq V$ for all $i \geq 0$,
    \begin{align*}
       \W_{1}\bigl(\law(\tfrac{S_n}{\sqrt{n}}),\mathcal N(0,\Sigma_n)\bigr) \in \mathcal{O}(n^{-1/2}).
    \end{align*} 
\end{theorem}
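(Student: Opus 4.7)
The plan is to apply Proposition \ref{prop:newS} after a standard linear-algebra reduction to $\Sigma_n=I_d$, so that the task becomes bounding
\[
\sum_{i=0}^{n-1}\E\left[\|U_i\|\,\W_2^2\bigl(\law(W),\law(W\mid U_i)\bigr)\right],\qquad U_i:=h_i(x_i)/\sqrt n,\quad W:=\sum_{i=0}^{n-1}U_i.
\]
This cleanly bypasses the regularity bottleneck flagged in Challenge \ref{challenge:regularity} and isolates the core technical task: for each $i$, exhibit an explicit coupling between the trajectory and a version of it conditioned on $U_i$ whose discrepancy is localized in time.

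The heart of the argument is a forward/backward coupling built on the split-chain representation. Fix $i$. For any sample $x_i'$ of $\law(x_i\mid U_i)$ we need to stitch together a trajectory that looks like the original chain far from time $i$ and passes through $x_i'$ at time $i$. Forward of $i$, I would run the two forward chains from $x_i$ and from $x_i'$ and apply a maximal coupling; by geometric ergodicity they coalesce at a forward time $\tau^+$ with a geometric tail. The backward direction is the new technical ingredient. Although time reversal is natural only under the stationary law, the split-chain representation introduces regeneration epochs, and the paper's Section \ref{sec:splitchain} establishes that the split-chain regeneration time has a geometric tail under \emph{only} the general geometric drift condition (no strong aperiodicity assumption). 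This drives a backward maximal coupling whose coalescence time $\tau^-$ also has a geometric tail. Outside the random window $[i-\tau^-,\,i+\tau^+]$ the original trajectory and its conditioned copy $\widetilde{x}_j$ agree, so
\[
W-\widetilde{W}\;=\;\frac{1}{\sqrt n}\sum_{j\in[i-\tau^-,\,i+\tau^+]}\bigl(h_j(x_j)-h_j(\widetilde{x}_j)\bigr).
\]

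The rest is Hölder bookkeeping. Cauchy--Schwarz on the window gives $\|W-\widetilde{W}\|^2\lesssim n^{-1}(\tau^++\tau^-)\sum_{j\in[i-\tau^-,\,i+\tau^+]}(\|h_j(x_j)\|^2+\|h_j(\widetilde{x}_j)\|^2)$. Multiplying by $\|U_i\|=\|h_i(x_i)\|/\sqrt n$ and taking expectation, Hölder separates the random window lengths from the functional terms. The drift $PV\le\lambda V+L\mathds{1}_C$ together with $\E[V(x_0)]<\infty$ propagates a uniform bound on $\E[V(x_j)]$; the hypothesis $\|h_j\|^{2+\delta}\le V$ with $\delta>1$ yields finite moments of order strictly greater than three for every $\|h_j(x_j)\|$; and the geometric tails of $\tau^\pm$ supply arbitrarily high moments. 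Combining these produces a per-index bound of order $n^{-3/2}$, and the outer sum over $i$ delivers the claimed $\mathcal O(n^{-1/2})$ rate. The requirement $\delta>1$ enters precisely at this Hölder step, since the triple product $\|h_i\|\cdot\|h_j\|^2$ demands a marginal integrability exponent strictly above three.

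The main obstacle is the backward coupling under only a general geometric drift condition. Classical time-reversal arguments require stationarity and often strong aperiodicity; here the initial law $\mu$ is arbitrary (only $\E[V(x_0)]<\infty$) and the minorization is in general an $m$-step, not a one-step minorization. Converting the split-chain regeneration structure into a uniform geometric tail bound on $\tau^-$ that holds for every index $i$, while absorbing the non-stationary burn-in from $\mu$, is where the delicate bookkeeping lives. Once this piece is in place, the forward coupling, the Proposition \ref{prop:newS} reduction, and the final Hölder estimate slot together along a clean template.
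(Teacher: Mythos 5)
Your proposal follows essentially the same route as the paper's proof: the reduction via Proposition~\ref{prop:newS} (with the linear-algebra normalization), a forward--backward maximal coupling that localizes the effect of conditioning on $x_i$ to a random window with geometrically decaying length, the split-chain regeneration-tail result to make the backward direction work without strong aperiodicity, and H\"older bookkeeping in which $\delta>1$ is consumed. The one point you flag as the delicate obstacle---time reversal under a non-stationary initial law---is resolved in the paper not by absorbing the burn-in into the backward coupling but by first coupling to the stationary chain (an $\mathcal O(n^{-1/2})$ cost) and then working with a bi-infinite stationary version, with Lemma~\ref{lem:backward} establishing geometric ergodicity of $P^\ast$ so that Lemma~\ref{lem:forward-meeting} applies in both time directions; note also that $\delta>1$ is needed because the coupling-time tails carry a $V(x_i)^{\alpha}$ weight (with $\alpha=(\delta-1)/\delta$) that must be balanced against $\|h_i(x_i)\|$ and the conditional $(2+\delta)$-moments under $\pi(V)<\infty$, rather than merely because the product $\|h_i\|\cdot\|h_j\|^2$ requires integrability above the third moment.
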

We emphasize that the best rate available in the literature is
$\mathcal O(n^{-1/2}\log n)$ \citep{srikant2024rates,wu2025uncertainty}. The
logarithmic factor appears because existing analysis relies on Lemma~\ref{lem:regularity}; see the discussion in Section~\ref{sec:challenge}.
In contrast, Theorem~\ref{thm:MC-W1} establishes the first \emph{optimal}
$\mathcal O(n^{-1/2})$ $\W_1$--CLT rate for geometrically ergodic Markov chains,
thereby resolving an open problem highlighted in the recent work~\citep{srikant2024rates}. 

We remark that our assumptions on the Markov chain are strictly weaker than those in
\cite{wu2025uncertainty}, which require a positive spectral gap and uniform boundedness of the
test functions $h_i$. Furthermore, unlike \cite{srikant2024rates}, which is is restricted to the homogeneous
case $h_i \equiv h$, Theorem~\ref{thm:MC-W1} allows for time-varying test
functions $\{h_i\}_{i\ge 0}$. To achieve the optimal $\mathcal O(n^{-1/2})$, Theorem~\ref{thm:MC-W1} imposes a slightly stronger
moment condition on the chain: we assume a $V$-dominated $(2+\delta)$-moment for some
$\delta>1$, while \cite{srikant2024rates} works with $\delta=1$.
 Nevertheless, the approach of \cite{srikant2024rates} does not yield the
$\mathcal O(n^{-1/2})$ rate---even under stronger moment assumptions---due to
its reliance on Lemma~\ref{lem:regularity}.
It remains unclear whether a $V$-dominated third moment alone suffices to
achieve an $\mathcal O(n^{-1/2})$ $\W_1$--CLT rate. We leave this question for
future research.

Our proof attains the optimal rate by leveraging Proposition~\ref{prop:newS}, which reduces the
analysis to controlling $\W_2^2\bigl(\law(W),\law(W\mid U_i)\bigr)$.
 We accomplish this by first
showing that the time-reversal kernel $P^*$ also satisfies
Assumption~\ref{assumption:markovchain}, and then applying a combination of
forward and backward maximal couplings to localize the effect of conditioning
on $U_i$.

As an intermediate step, we prove that the regeneration time of the split chain
\citep{nummelin1978splitting} associated with a geometrically ergodic Markov chain
has a geometric tail, without assuming strong aperiodicity \citep{roberts1999bounds} or other restrictive
conditions \citep{rosenthal1995minorization}. The details are presented in Section~\ref{sec:splitchain} and may be
of independent interest.
Finally, the
proof of Theorem~\ref{thm:MC-W1} is outlined in Section~\ref{sec:outline-MC-W1},
with full details deferred to Appendix~\ref{sec:proof-MC-W1}.

\subsection{CLT Rates in Wasserstein-p ($p \geq 2$) Distance}
The  $\W_p$ ($p\ge 2$) CLT rates for Markov chains 
remain poorly understood. To narrow this gap, we establish the following
theorem, which provides the first multivariate $\W_p$ ($p\ge 2$) CLT rates for
geometrically ergodic Markov chains with time-homogeneous additive functionals ($h_i \equiv h$) under mild moment conditions.

\begin{theorem}[Wasserstein--p CLT Rates for Markov Chains]
\label{thm:MC-Wp}
Under Assumption \ref{assumption:markovchain}, if $h_i \equiv h$ and $\|h\|^{p+q} \leq V$ for some $p \geq 2$ and $q \in (0,2]$,
    \begin{align*}
       \W_{p}\bigl(\law(\tfrac{S_n}{\sqrt{n}}),\mathcal N(0,\Sigma_n)\bigr) \in \mathcal{O}(n^{-\frac{p+q-2}{2(2p+q-2)}}) .
    \end{align*}
\end{theorem}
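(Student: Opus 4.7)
My plan is to reduce the geometrically ergodic Markov chain setting to the $M$-dependent setting with $M=1$ via a split-chain regeneration decomposition, and then invoke Theorem~\ref{thm:m-depend-p} directly. The matching of the two rate expressions is what makes this strategy viable.

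The first step is to apply the Nummelin split-chain construction developed in Section~\ref{sec:splitchain} to produce a sequence of regeneration epochs $0 \leq \tau_0 < \tau_1 < \tau_2 < \cdots$ for the chain $\{x_i\}_{i\geq 0}$. A crucial intermediate result already established in the Markov-chain part of the paper is that, under Assumption~\ref{assumption:markovchain} alone (no strong aperiodicity needed), the inter-regeneration times $T_k := \tau_k - \tau_{k-1}$ have geometric tails, i.e.\ $\P(T_k > t) \lesssim \rho^{t}$ for some $\rho \in (0,1)$ uniformly in $k$. I would then define the regenerative blocks
\[
Y_k := \sum_{i=\tau_{k-1}}^{\tau_k - 1} h_i(x_i),
\]
and argue that $\{Y_k\}_k$ is a 1-dependent $\R^d$-valued sequence---adjacent blocks are coupled only through the shared regeneration point afforded by the split construction. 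A uniform moment bound $\sup_k \E\|Y_k\|^{p+q} \lesssim 1$ would then follow by combining the domination $\|h_i\|^{p+q} \leq V$, the Lyapunov drift $PV \leq \lambda V + L\mathds{1}_C$ (which controls $\E[V(x_i)]$ along excursions), and the geometric tail of $T_k$.

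With these pieces in place, I would write
\[
S_n = \sum_{k=1}^{N_n} Y_k + R_n,
\]
where $N_n$ is the number of complete regenerations before time $n$ and $R_n$ collects the initial and final partial-block remainders. The geometric tail of the $T_k$ yields $N_n \in \Theta(n)$ with very high probability and $\E\|R_n\|^p \lesssim 1$, so that the boundary contribution to $\W_p(\law(S_n/\sqrt n),\cdot)$ is only $\mathcal O(n^{-1/2})$. Applying Theorem~\ref{thm:m-depend-p} with $M=1$ to the $\{Y_k\}$ (with $\sim n$ effective blocks, each having the uniform $(p+q)$-moment bound) supplies a $\W_p$ rate of order $\mathcal O(n^{-(p+q-2)/(2(2p+q-2))})$ between $\law\bigl(\tfrac{1}{\sqrt n}\sum_{k=1}^{N_n} Y_k\bigr)$ and $\mathcal N(0,\widetilde\Sigma)$ for a suitable covariance $\widetilde\Sigma$. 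The mismatch between $\widetilde\Sigma$ and $\Sigma_n$ costs at most $\mathcal O(n^{-1/2})$ by Lemma~\ref{lem:Sigma-exists1} together with a standard Gaussian-to-Gaussian Wasserstein bound; this is absorbed by the main rate since $(p+q-2)/(2(2p+q-2)) \leq 1/2$. A final triangle inequality assembles the result.

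The main obstacle is twofold. First, rigorously establishing 1-dependence (rather than a larger $M$) of $\{Y_k\}$ under the general, possibly multi-step, minorization afforded by Assumption~\ref{assumption:markovchain}---this is precisely why the split-chain analysis of Section~\ref{sec:splitchain} must be carried out without strong aperiodicity and why the geometric-tail statement for $T_k$ in that setting is indispensable. Second, transferring the $M$-dependent CLT from the random block-indexed scale (with $N_n$ summands) to the time-indexed scale in $\W_p$---rather than merely in distribution---requires jointly controlling the random count $N_n$, the boundary remainder $R_n$, and the replacement of the normalization $\sqrt{N_n}$ by $\sqrt n$, all in an $L^p$ sense. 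Once these two reductions are executed, the $M$-dependent rate from Theorem~\ref{thm:m-depend-p} plugs in directly to yield the claimed $\mathcal O\bigl(n^{-(p+q-2)/(2(2p+q-2))}\bigr)$ bound.
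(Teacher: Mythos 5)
Your high-level strategy is the same as the paper's (split-chain regeneration, geometric-tail lemma, reduce to $1$-dependent blocks, invoke Theorem~\ref{thm:m-depend-p} with $M=1$, then match covariances), but the proposal leaves the decisive step unexecuted, and your choice of blocks makes it hard to execute. You define raw blocks $Y_k=\sum_{i=\tau_{k-1}}^{\tau_k-1}h_i(x_i)$ and then say you will "apply Theorem~\ref{thm:m-depend-p} with $\sim n$ effective blocks,'' treating the random count $N_n$ as if it were deterministic. That is precisely the obstacle you flag at the end but never resolve: Theorem~\ref{thm:m-depend-p} applies to a sum with a \emph{fixed} number of centered $M$-dependent summands, and one cannot condition on $N_n$ either, since conditioning on the block count destroys the $1$-dependence of the blocks. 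The paper's proof devotes its entire Step~2 to this point: it replaces $K_n$ by the deterministic index $k_n=\lfloor n/\E[\Lambda_2]\rfloor$ and controls $\|\widetilde S_n-\bar S_n\|_{L^p}$ via an odd/even splitting of the cycle increments into two martingale difference sequences, the Rosenthal--Burkholder martingale-transform inequality (Lemmas~\ref{lem:BR_Hilbert}--\ref{lem:transform_BR}), and the concentration of $K_n$ (Lemma~\ref{lem:K}); this costs $\mathcal O(n^{-1/4})$ in $\W_p$, not the $\mathcal O(n^{-1/2})$ you assert for the "boundary contribution.'' Crucially, this machinery is available only because the paper does \emph{not} block the raw sum: it first solves the Poisson equation ($g_t=\sum_k P^k h_{t+k}$), writes $S_n=g_0(x_0)-g_n(x_n)+M_n$, and blocks the \emph{martingale} $M_n$ at the regeneration times. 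With your raw blocks $Y_k$, the increments over the random index range $[k_n\wedge N_n,\,k_n\vee N_n]$ are only $1$-dependent, not martingale differences, so no Burkholder-type bound applies and you would need a new maximal-inequality argument that the proposal does not supply.

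Two further points would need repair even granting the reduction. First, Theorem~\ref{thm:m-depend-p} requires centered summands; your raw blocks $Y_k$ are not obviously mean zero for time-inhomogeneous $h_i$ (the random window $\{\tau_{k-1}\le i<\tau_k\}$ is correlated with $h_i(x_i)$), whereas the paper's cycle increments $\widetilde M_i=M_{T_i}-M_{T_{i-1}}$ are centered automatically by optional stopping (Lemma~\ref{lem:mds-tildeMi-proof}). Second, the covariance-matching step is not a consequence of Lemma~\ref{lem:Sigma-exists1}: one must compare $\Sigma_n$ with $\tfrac1n\operatorname{Var}$ of the block sum, which the paper does separately (Appendix~\ref{sec:proof-covariance-mismatch}), obtaining an operator-norm mismatch of $\mathcal O(n^{-1/2})$ and hence, via Lemma~\ref{lem:gauss-gauss}, a Gaussian-to-Gaussian $\W_p$ cost of $\mathcal O(n^{-1/4})$ (not $\mathcal O(n^{-1/2})$ as you claim); it is still absorbed by the target rate since $\tfrac{p+q-2}{2(2p+q-2)}\le\tfrac14$, but the accounting in your proposal is off. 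In short, the skeleton is right, but the missing ideas are exactly the paper's Poisson-equation/martingale blocking (which buys centering and Burkholder-type control) and the odd/even transform argument for trading the random block count for a deterministic one.
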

These rates match the best bounds currently available
for $m$-dependent data (Theorem~\ref{thm:m-depend-p}) and resolve another open
problem highlighted in the conclusion of \cite{srikant2024rates}. As discussed
following Theorem~\ref{thm:m-depend-p}, these rates may not be optimal; however,
they substantially narrow the gap between the $\W_1$ theory and the $\W_p$
($p\ge 2$) regime. 

From a technical perspective, roughly speaking, we proceed as follows. Using the
split-chain construction in Section~\ref{sec:splitchain}, we first apply a
martingale decomposition to the partial sum, and then perform a regeneration
decomposition. This yields a block representation in which successive
regenerative blocks form a $1$-dependent sequence (a special case of
$M$-dependence with $M=1$). This structure allows us to invoke our $\W_p$
($p\ge 2$) CLT rates for $M$-dependent sequences (Theorem~\ref{thm:m-depend-p}).
The proof is outlined in Section~\ref{sec:outline-MC-Wp} and detailed in
Appendix~\ref{sec:proof-MC-Wp}.

In terms of applications, establishing convergence rates for the Markov chain
CLT is crucial for assessing asymptotic efficiency, deriving finite-sample error
bounds, and enabling statistical inference for parameters of interest in
stochastic algorithms and dynamical systems
\citep{srikant2024rates,samsonov2024gaussian,wu2025uncertainty}. Since prior work
on Markov chain CLT rates is largely restricted to $\W_1$ and often yields suboptimal
bounds, our Theorems~\ref{thm:MC-W1} and \ref{thm:MC-Wp} make it possible to
derive sharper guarantees under $\W_1$ and to develop results under more general $\W_p$
metrics for $p\ge 2$.

\section{Technical Overview of CLT Rates for Markov Chains}\label{sec:outline-berry}
This section is organized as follows. Section~\ref{sec:splitchain} reviews the
basic elements of Nummelin splitting \citep{nummelin1978splitting,Meyn12_book} and
presents Lemma~\ref{lem:geom_tail}, which establishes a  geometric tail bound for
the regeneration time. Sections~\ref{sec:outline-MC-W1} and~\ref{sec:outline-MC-Wp}
then outline the proofs of Theorems~\ref{thm:MC-W1} and~\ref{thm:MC-Wp},
respectively.
\subsection{Nummelin's splitting}\label{sec:splitchain}

We collect the standard facts needed for the split-chain construction and the
resulting regeneration decomposition. The only nonstandard result in this
subsection is the geometric-tail bound in Lemma~\ref{lem:geom_tail}.
\begin{lemma}\label{lem:small-set-return}
Under Assumption~\ref{assumption:markovchain}, there exist an accessible small set \(\bar C\subseteq \mathcal X\) and a constant \(\kappa>1\) such that
\begin{equation}\label{eq:return}
    M(a):=\sup_{x\in \bar C}\E_x\big[a^{\sigma_{\bar C}}\big]<\infty,
    \qquad \forall a\in(1,\kappa],
\end{equation}
where $\sigma_{\bar C}:=\inf\{n\ge 1:x_n\in \bar C\}.$ Moreover, there exist \(m\in\mathbb N\), \(\beta\in(0,1)\), and a probability measure \(\nu\) on \(\mathcal X\), supported on \(\bar C\), such that \(\nu(\bar C)=1\), \(\nu(\bar C^c)=0\) and 
\begin{equation}\label{eq:M}
   P^m(x,\cdot)\ge \beta \mathbf{1}_{\bar C}(x)\nu(\cdot),
   \qquad x\in\mathcal X.
\end{equation}
The existence of \(\bar C\) and the exponential return bound~\eqref{eq:return} is a standard consequence of geometric ergodicity; see, for example, Theorem~15.0.1 of \cite{Meyn12_book} and Theorem~15.1.5 of \cite{douc2018markov}. The minorization condition~\eqref{eq:M} is the standard small-set minorization; see Proposition~5.2.4 of \cite{Meyn12_book}.
\end{lemma}
Lemma~\ref{lem:small-set-return} provides exactly the two ingredients needed for
Nummelin's splitting: an accessible small set with exponentially decaying
return times, and a minorization for the \(m\)-skeleton. In particular, given the minorization condition~\eqref{eq:M}, we apply Nummelin's splitting \citep{nummelin1978splitting} to the \(m\)-skeleton of the chain, which yields an augmented process \begin{equation}\label{eq:splitchain} \Phi=\{(x_n,y_n)\}_{n\ge 0} \qquad\text{on}\qquad \widetilde{\mathcal X}:=\mathcal X\times\{0,1\}, \end{equation} where \(x_n\) is the original state and \(y_n\) is an auxiliary level variable. The first coordinate \(\{x_n\}_{n\ge 0}\) has exactly the same law as the original Markov chain. Full details of the construction may be found in \cite{Meyn12_book,lemanczyk2021general}. Two key properties of~\eqref{eq:splitchain} are: (i) At each block boundary $km$, the level is determined from the current state only. Furthermore, if $x_{km}\in\bar C$, draw $y_{km}\sim\mathrm{Bernoulli}(\beta)$; (ii) If $y_{km}=1$, we regenerate $(x_{(k+1)m},y_{(k+1)m})$ by \[ \mathbb P\left(x_{(k+1)m}\in \d x \middle| x_{km}\in\bar C, y_{km}=1\right)=\nu(\d x), \] so $(\bar C,1)$ is an atom for the $m$-skeleton $\{(x_{km}, y_{km})\}_{k \geq 0}$.

To define regeneration cycles, let
\[
r_1:=\inf\{k\ge0:(x_{km}, y_{km})\in(\bar C,1)\},
\qquad
r_i:=\inf\{k>r_{i-1}:(x_{km}, y_{km})\in(\bar C,1)\},
\qquad i\ge2,
\]
be the successive visits of the split chain to the atom. Following the
convention that a new cycle begins one block after an atom visit, we set
\[
\tau_0:=0,
\qquad
\tau_i:=r_i+1,
\qquad
L_i:=\tau_i-\tau_{i-1},
\qquad i\ge1.
\]
The cycle lengths \(\{L_i\}_{i\ge1}\) satisfy the following lemma, whose proof is
deferred to Section~\ref{sec:geom_tail}.

\begin{lemma}\label{lem:geom_tail}
The cycle lengths \(\{L_i\}_{i\ge 1}\) are independent, \(\{L_i\}_{i\ge 2}\) are i.i.d., and there exist constants \(b>0\) and \(\rho\in(0,1)\) such that
\[
\mathbb P(L_i>\ell)\le b\rho^\ell,
\qquad \forall i\ge 1,\ \ell\ge 0.
\]
\end{lemma}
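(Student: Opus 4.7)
The plan is to exploit the regenerative structure provided by Nummelin's splitting. The independence and i.i.d.\ claims will follow from the strong Markov property applied at the regeneration times, while the geometric tail will be obtained by decomposing each cycle into (i) exponential-moment return times of the $m$-skeleton to $\bar{C}$ inherited from \eqref{eq:return}, composed with (ii) a $\operatorname{Geometric}(\beta)$ number of Bernoulli trials that determine when regeneration actually fires.

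For independence, by construction of the split chain the event $\{k=\tau_i-1\}$ forces $(x_{km},y_{km})\in\bar{C}\times\{1\}$, so $x_{\tau_i m}\sim\nu$ and is conditionally independent of the past. Hence the post-$\tau_i$ trajectory of $\Phi$ is a fresh copy of the split chain started from $\nu$. Since $L_{i+1}$ depends only on this post-$\tau_i$ trajectory (up to the next regeneration), the strong Markov property yields mutual independence of $\{L_i\}_{i\ge 1}$ and i.i.d.\ structure for $\{L_i\}_{i\ge 2}$, with common law equal to that of a cycle launched from $\nu$. The first cycle $L_1$ may have a distinct law because it is driven by the initial distribution $\mu$.

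For the tail bound, fix $\eta\in\{\mu,\nu\}$ and consider a single cycle launched from $\eta$. Let $T_0:=0$ and $T_{j+1}:=\inf\{k>T_j:x_{km}\in\bar{C}\}$ denote the successive $m$-skeleton hits of $\bar{C}$, and let $K\sim\operatorname{Geometric}(\beta)$, independent, encode the number of Bernoulli trials until the first success, so that the cycle length satisfies $L_i\le T_K+1$. Condition \eqref{eq:return} combined with the Foster--Lyapunov drift in Assumption~\ref{assumption:markovchain} transfers to the $m$-skeleton (whose kernel $P^m$ inherits a geometric drift in $V$), yielding an $a>1$ such that $M_a:=\sup_{x\in\bar{C}}\mathbb{E}_x[a^{T_1}]<\infty$ and $\mathbb{E}_\eta[a^{T_1}]<\infty$ for both initial laws (trivial for $\eta=\nu$, and for $\eta=\mu$ via $\mathbb{E}[V(x_0)]<\infty$). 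Since $x_{T_j m}\in\bar{C}$ for every $j\ge 1$, iterating the strong Markov property at the $T_j$'s and conditioning on $K$ gives
\[
\mathbb{E}\bigl[a^{T_K}\bigr] \;\le\; \mathbb{E}_\eta\bigl[a^{T_1}\bigr]\sum_{k\ge 1}(1-\beta)^{k-1}\beta\, M_a^{k-1} \;=\; \frac{\beta\,\mathbb{E}_\eta[a^{T_1}]}{1-(1-\beta)M_a},
\]
which is finite provided $(1-\beta)M_a<1$. Markov's inequality then yields $\mathbb{P}(L_i>\ell)\le b\rho^\ell$ with $\rho=1/a\in(0,1)$, uniformly over $\eta\in\{\mu,\nu\}$.

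The hard part will be selecting a single $a>1$ meeting both requirements: $M_a$ finite (which forces $a\le\kappa$) and $(1-\beta)M_a<1$ (so the geometric series converges). This reduces to showing right-continuity $M_a\downarrow M_1=1$ as $a\downarrow 1$, which follows from uniform exponential integrability of $T_1$ over $x\in\bar{C}$ guaranteed by \eqref{eq:return}. A smaller technical point is transferring exponential return-time moments from the original chain (where \eqref{eq:return} applies to $\sigma_{\bar{C}}$) to the $m$-skeleton; this is standard and can be handled either via a drift inequality for $P^m$ or by decomposing each $m$-skeleton excursion into a controlled number of original-chain excursions, each dominated using \eqref{eq:return}.
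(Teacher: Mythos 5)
Your overall route is the same as the paper's: strong Markov at the atom for independence and the i.i.d.\ structure of $\{L_i\}_{i\ge2}$, a cycle written as a $\operatorname{Geometric}(\beta)$ number of $m$-skeleton returns to $\bar C$, and a choice of $a>1$ close enough to $1$ that $(1-\beta)M_a<1$ (using $M_a\downarrow 1$). The problem is the step you set aside as ``standard and a smaller technical point'': transferring exponential moments from the return time $\sigma_{\bar C}$ of the \emph{original} chain to the return/entrance times of the \emph{$m$-skeleton} into $\bar C$. Condition \eqref{eq:return} controls visits to $\bar C$ at arbitrary times, but the regeneration mechanism only fires at times $km$; the chain could keep hitting $\bar C$ at times off the lattice $m\mathbb Z$ for a long stretch, and nothing in \eqref{eq:return} alone rules this out. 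This is exactly where the paper does its real work (Lemma~\ref{lem:returntime-m}): it picks $\ell$ with $\gcd(\ell,m)=1$ and $\inf_{x\in\bar C}P^\ell(x,\bar C)>0$ (using aperiodicity and accessibility), and then runs an ``episode'' argument over residue classes mod $m$ so that each batch of at most $m-1$ consecutive returns lands on the lattice with probability at least $c^{m-1}$, giving geometric domination of the number of original-chain excursions needed. Neither of your two proposed shortcuts closes this gap as stated: a drift inequality for $P^m$ gives exponential hitting-time bounds for the skeleton into a \emph{sublevel set} $\{V\le R\}$, not into $\bar C$ at skeleton times; and ``decomposing each $m$-skeleton excursion into a controlled number of original-chain excursions'' begs the question, since controlling that number is precisely the aperiodicity/residue-class issue. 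Indeed, dispensing with strong aperiodicity ($m=1$) is the stated novelty of this lemma, so it cannot be waved through as routine.

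The same gap reappears in your treatment of the first cycle: you claim $\E_\mu[a^{T_1}]<\infty$ ``via $\E[V(x_0)]<\infty$,'' but the drift only yields exponential moments for the skeleton's entrance into a sublevel set $D=\{V\le R\}$; getting from $D$ into $\bar C$ \emph{at a skeleton time} requires an extra argument (the paper's Lemma~\ref{lem:entrance-mgf}, Step~2, which uses that sublevel sets are small, pads the small-set power to a multiple of $m$, and invokes accessibility of $\bar C$ to get a uniformly positive hitting probability in a bounded number of skeleton steps). Your skeleton of the argument is right, and your handling of the geometric-series/continuity-of-$M_a$ issue is the same as the paper's, but as written the proof is incomplete at its central step.
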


Lemma~\ref{lem:geom_tail} is the main result of this subsection. It will be used in two places: first, to prove geometric ergodicity of the time-reversed chain \(P^\ast\) in Section~\ref{sec:outline-MC-W1}; and second, to control the moments arising from the regeneration decomposition in Section~\ref{sec:outline-MC-Wp}. Related bounds were obtained in \cite{roberts1999bounds} in the strongly aperiodic case \(m=1\). The general-\(m\) setting was studied in \cite{rosenthal1995minorization}, but under the stronger assumption that a sublevel set of the form \(\{x \in \mathcal X : V(x)\le R\}\) itself satisfies~\eqref{eq:return}, a property that does not follow directly from the standard split-chain construction. A technical issue in \cite{rosenthal1995minorization} was later clarified in \cite{roberts1999bounds}. Our Lemma~\ref{lem:geom_tail} recovers the \(m=1\) bounds of \cite{roberts1999bounds} and extends them to general \(m\) under the standard geometric-drift framework.

\subsection{Theorem \ref{thm:MC-W1} ($p = 1$): Proposition \ref{prop:newS} + Forward and Backward Maximal Coupling}\label{sec:outline-MC-W1}
To streamline the analysis, we first reduce to the stationary initialization $x_0\sim\pi$;
as shown in Appendix~\ref{sec:stationary}, this initialization mismatch contributes at most
$\mathcal O(n^{-1/2})$ to the final CLT rate. 

Inspired by Proposition \ref{prop:newS}, we 
 consider the normalization \[
U_i:=\frac{h_i(x_i)}{\sqrt n},\quad \forall i \in \{0,\dots, n-1\} \quad \text{and} \quad 
W:=\sum_{i=0}^{n-1}U_i.
\]
Our goal is to control
$\sum_{i=0}^{n-1}\E\left[\|U_i\|\W_2^2\bigl(\law(W),\law(W\mid U_i)\bigr)\right]$.
Since \(U_i\) is measurable with respect to \(x_i\), and since the map
\(\mu \mapsto \W_2^2(\nu,\mu)\) is convex for fixed \(\nu\)
\citep{matthes2009family}, 
\begin{align*}
\sum_{i=0}^{n-1}\E\left[\|U_i\|\W_2^2\bigl(\law(W),\law(W\mid U_i)\bigr)\right]
\le
\sum_{i=0}^{n-1}\E\left[\|U_i\|\W_2^2\bigl(\law(W),\law(W\mid x_i)\bigr)\right].
\end{align*}
Fix \(i\), and for each \(x \in \mathcal X\), write 
\[
  \nu_x:=\law(W\mid x_i=x).
\]
Under stationarity, \(x_i \sim \pi\), and hence $\law(W)=\int_{\X}\nu_y\,\pi(\d y).$ Therefore, by the convexity of \(\mu \mapsto \W_2^2(\mu,\nu_x)\),
\[
  \W_2^2\bigl(\law(W),\nu_x\bigr)
  \le
  \int_{\X}\W_2^2(\nu_y,\nu_x)\,\pi(\d y).
\]
Thus, the main part of the argument reduces to bounding
\(\W_2^2(\nu_y,\nu_x)\) for fixed \(x,y \in \mathcal X\). To this end, we employ a forward--backward maximal coupling to
\emph{localize the effect of conditioning on \(x_i\)}. We work with a
bi-infinite stationary version \(\{x_t\}_{t\in\mathbb Z}\) of the chain,
with \(x_0 \sim \pi\). Fix \(i \in \{0,\dots,n-1\}\) and \(x,y \in \mathcal X\).
We construct a coupling of two bi-infinite trajectories
\((X_t^x)_{t\in\mathbb Z}\) and \((X_t^y)_{t\in\mathbb Z}\) such that
\[
\law\bigl((X_t^x)_{t\in\mathbb Z}\bigr)
=
\law\bigl((x_t)_{t\in\mathbb Z} \mid x_i = x\bigr),
\qquad
\law\bigl((X_t^y)_{t\in\mathbb Z}\bigr)
=
\law\bigl((x_t)_{t\in\mathbb Z} \mid x_i = y\bigr),
\]
and such that there exist meeting times \(T_i^+(x,y)\) and \(T_i^-(x,y)\) for
which
\[
X_t^x = X_t^y
\qquad\text{whenever}\qquad
t \ge i + T_i^+(x,y)
\quad\text{or}\quad
t \le i - T_i^-(x,y).
\]
This coupling localizes the effect of conditioning on \(x_i\) to a random
neighborhood of \(i\).

To construct the coupling above---and in particular the backward maximal
coupling---we first need a rigorous definition of the backward
(time-reversed) process, together with its basic properties. These are provided
by the following lemma, whose proof relies on the split-chain construction and
Lemma~\ref{lem:geom_tail}, and is deferred to
Appendix~\ref{sec:proof-backward}.
\begin{lemma}
\label{lem:backward}
Under Assumption~\ref{assumption:markovchain}, the time-reversed kernel $P^\ast$
is $\psi$-irreducible, aperiodic, and geometrically ergodic.
\end{lemma}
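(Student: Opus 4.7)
The plan is to prove the three properties of $P^\ast$ by combining the stationary time-reversal identity $\pi(\d x) P(x,\d y)=\pi(\d y) P^\ast(y,\d x)$ with the Nummelin splitting recalled in Section~\ref{sec:splitchain}. For $\psi$-irreducibility, I would fix a measurable set $A$ with $\pi(A)>0$ and, by iterating the identity, obtain $\int_B\pi(\d y)(P^\ast)^n(y,A)=\int_A\pi(\d x) P^n(x,B)$ for every measurable $B$. If the set $B:=\{y:(P^\ast)^n(y,A)=0\text{ for all }n\ge 1\}$ had positive $\pi$-measure, summing over $n$ would force $\sum_n P^n(x,B)=0$ for $\pi$-a.e.\ $x\in A$, contradicting the $\psi$-irreducibility of $P$; hence $\pi(B)=0$ and $P^\ast$ is $\psi$-irreducible. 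Aperiodicity follows by an analogous argument applied to the cyclic class decomposition, which is characterized in reversal-symmetric terms.

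The main work is geometric ergodicity. By Theorem~15.0.1 of \cite{Meyn12_book} it suffices to exhibit an accessible small set for $P^\ast$ together with a geometric moment bound on the corresponding return time. I would take the set to be $\bar C$. Integrating the minorization \eqref{eq:M} against $\pi$ and applying the duality identity gives $\int_B(P^\ast)^m(y,A)\pi(\d y)\ge\beta\nu(B)\pi(A)$ for all measurable $A,B\subseteq\bar C$; a standard Radon--Nikodym extraction then produces a probability measure $\nu^\ast$ and $\beta^\ast>0$ with $(P^\ast)^m(y,\cdot)\ge\beta^\ast\mathds 1_{\bar C}(y)\nu^\ast(\cdot)$ (up to a $\pi$-null modification of $\bar C$), so $\bar C$ is small for $P^\ast$. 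To control $\sigma^\ast_{\bar C}$, I would work with the bi-infinite stationary split chain $\widetilde\Psi=\{(x_n,y_n)\}_{n\in\mathbb{Z}}$ obtained by extending the construction of Section~\ref{sec:splitchain}: its regeneration epochs form a stationary point process on $\mathbb{Z}$ whose inter-arrival lengths $\{L_i\}$ have the geometric tail of Lemma~\ref{lem:geom_tail}. Time-reversal of a stationary point process preserves its law, so the backward inter-regeneration lengths are equidistributed with the forward ones and inherit the same geometric tail. Dominating the first backward hit of $\bar C$ under $P^\ast$ by the first backward regeneration epoch of $\widetilde\Psi$ then yields $\sup_{x\in\bar C}\E^\ast_x[a^{\sigma^\ast_{\bar C}}]<\infty$ for some $a>1$, and Meyn--Tweedie's criterion delivers geometric ergodicity of $P^\ast$.

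The hard part, in my view, is the time-reversal of the split-chain regenerative structure: making the statement ``backward cycle lengths have the same law as forward cycle lengths'' fully rigorous requires a Palm-calculus argument, since the splitting is defined only on the $m$-skeleton and the regeneration epochs interact with the $m$-block boundaries. I expect the cleanest route is to first establish the reversed geometric tail at the level of the $m$-skeleton---which is an honest Markov chain with a genuine atom $(\bar C,1)$, so stationary time-reversal is transparent---and then lift the resulting bound to the original time scale by absorbing the block length $m$ into a multiplicative constant. Once this ingredient is in place, everything else is routine bookkeeping combining Lemma~\ref{lem:geom_tail}, \eqref{eq:M}, and Theorem~15.0.1 of \cite{Meyn12_book}.
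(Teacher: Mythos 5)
Your $\psi$-irreducibility argument is correct and slightly more direct than the paper's: plugging the putative blocking set $B:=\{y:(P^\ast)^n(y,A)=0\ \forall n\ge1\}$ into the $n$-step duality identity \eqref{eq:TR-n} and summing over $n$ forces $\sum_n P^n(x,B)=0$ for $\pi$-a.e.\ $x\in A$, which contradicts $\psi$-irreducibility of $P$ once one recalls that $\pi$ is equivalent to a maximal irreducibility measure. The paper instead shows $B$ is absorbing under both $P^\ast$ and $P$ and closes via the $V$-uniform ergodicity bound on the non-reaching set $B_A$; both routes are fine, and yours is a bit more elementary. Your aperiodicity argument (transfer of the cyclic decomposition under reversal) is the paper's Step~2. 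Your stated fallback for geometric ergodicity---pass to the Nummelin-split $m$-skeleton where $\bar C\times\{1\}$ is a genuine atom, reverse time there so that forward and backward excursion-length laws from the atom coincide, deduce a geometric moment on the backward return time, then lift to $(P^\ast)^m$ and to $P^\ast$---is exactly the paper's Step~3.

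The primary route you sketch first has two genuine gaps. First, integrating \eqref{eq:M} against $\pi$ and applying duality yields only the integral inequality $\int_B(P^\ast)^m(y,A)\,\pi(\d y)\ge\beta\nu(B)\pi(A)$ for $A,B\subseteq\bar C$; disintegrating against $\pi$ gives $(P^\ast)^m(y,A)\ge\beta\pi(A)\,\tfrac{\d\nu}{\d\pi}(y)$ for $\pi$-a.e.\ $y\in\bar C$, and the density $\tfrac{\d\nu}{\d\pi}$ need not be bounded away from zero on $\bar C$. At best you obtain that some positive-$\pi$-measure subset $\bar C'\subsetneq\bar C$ is small for $P^\ast$, and you would then have to redo the backward return-time analysis for $\bar C'$. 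Second, and more importantly, the equidistribution of forward and backward inter-regeneration gaps is a statement about the Palm law at the atom (equivalently, the stationary conditional law $\mu_{\mathcal A}$), not about the conditional law given $x_0=x$ for a particular $x\in\bar C$. So $\sup_{x\in\bar C}\E_x\bigl[a^{\sigma_{\bar C}}\bigr]$ for the reversed chain does not follow directly from the point-process reversal; exactly as for the forward chain, where the first cycle $L_1$ required separate treatment in Lemma~\ref{lem:geom_tail}, the backward hit started from a fixed $x\in\bar C$ needs its own control. The paper sidesteps both problems by working with the split kernel $\widetilde Q^\ast$: $\mathcal A=\bar C\times\{1\}$ is an \emph{atom}, not merely a small set, so $\E_z^{\widetilde Q^\ast}[a^{\tau_{\mathcal A}}]$ is constant over $z\in\mathcal A$, and the forward/backward excursion identity is invoked under $\mu_{\mathcal A}=\widetilde\pi(\cdot\mid\mathcal A)$, which \emph{is} the Palm law at the atom. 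Your plan only closes both gaps once you drop to the skeleton-with-atom level, i.e.\ once you execute your stated fallback.
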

Lemma~\ref{lem:backward}, together with
Assumption~\ref{assumption:markovchain}, implies that both the forward chain
and the time-reversed chain are geometrically ergodic. The next lemma shows
that, in each direction, the coupled chains meet after a time with a geometric
tail. Its proof is deferred to Appendix~\ref{sec:proof-forward-meeting}.
\begin{lemma}
\label{lem:forward-meeting}
Under Assumption \ref{assumption:markovchain}, there exists $\rho \in(0,1)$ and,
for each $x,y\in\mathcal X$, a coupling $(X_t^x,X_t^y)_{t\ge0}$ of
two copies of the chain started at $X_0^x=x$, $X_0^y=y$, such that the
meeting time $T^+(x,y):=\inf\{t\ge0: X_t^x=X_t^y\}$ satisfies
\[
  \P\bigl(T^+(x,y)>k\bigr)
  \lesssim
  \bigl(V(x)+V(y)\bigr)\rho^k,
  \qquad k\ge0.
\]
Consequently, for every $p\ge1$ there exists $C_{p}<\infty$ with $  \E\bigl[T^+(x,y)^p\bigr]
  \le C_{p}\bigl(V(x)+V(y)\bigr).$
\end{lemma}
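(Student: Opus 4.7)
The plan is to construct the coupling via Nummelin splitting (Section \ref{sec:splitchain}) and then bound the meeting time through a drift-minorization analysis of the bivariate $m$-skeleton chain.

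First, I would run two independent copies of the split chain from Section \ref{sec:splitchain}, $\Phi^x$ and $\Phi^y$, starting at $(x,0)$ and $(y,0)$. At each $m$-skeleton time $km$, a chain whose state lies in $\bar{C}$ flips an independent $\mathrm{Bernoulli}(\beta)$ coin to determine the level $y^{(\cdot)}_{km}$; when the coin is $1$, the next $m$-skeleton state is drawn from $\nu$. Set $\tau^\ast := \inf\{k \geq 0 : y^{(x)}_{km} = 1 \text{ and } y^{(y)}_{km} = 1\}$. At this first joint regeneration time, I couple the two draws from $\nu$ by the identity coupling, so that $X^x_{(\tau^\ast+1)m} = X^y_{(\tau^\ast+1)m}$, and thereafter use identity coupling to keep the chains equal. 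The marginal kernels of each chain are preserved throughout, so this is a valid coupling, and $T^+(x,y) \leq (\tau^\ast + 1)m$.

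Next, I would bound $\tau^\ast$ via a drift-minorization analysis of the bivariate $m$-skeleton kernel $\tilde P^m := P^m \otimes P^m$ with Lyapunov function $\tilde V(x,y) := V(x) + V(y)$. Iterating Assumption \ref{assumption:markovchain} gives $\tilde P^m \tilde V \leq \lambda^m \tilde V + M$ for some constant $M$, while \eqref{eq:M} yields $\tilde P^m(x,y;\cdot) \geq \beta^2 \mathbf{1}_{\bar{C}\times\bar{C}}(x,y)\, \nu\otimes\nu(\cdot)$, making $\bar{C}\times\bar{C}$ petite for $\tilde P^m$. Standard drift-to-small-set results (e.g., Meyn--Tweedie, Theorem 15.2.5) then yield $\mathbb{E}_{x,y}[a^{\sigma_{\bar{C}\times\bar{C}}}] \lesssim V(x)+V(y)$ for the first return time $\sigma_{\bar{C}\times\bar{C}}$ of the product chain and some $a>1$, with subsequent return times (started in $\bar{C}\times\bar{C}$) having state-independent exponential moments. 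Since each visit to $\bar{C}\times\bar{C}$ produces a joint regeneration with independent probability $\beta^2$, $\tau^\ast$ is dominated by a $\mathrm{Geom}(\beta^2)$ sum of such return times, giving $\mathbb{E}_{x,y}[a'^{\tau^\ast}] \lesssim V(x)+V(y)$ for some $a'>1$. Markov's inequality then delivers the advertised geometric tail bound, and the moment bound $\mathbb{E}[T^+(x,y)^p] \leq C_p(V(x)+V(y))$ follows by integrating the tail.

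The main obstacle I anticipate is justifying the drift-to-$\bar{C}\times\bar{C}$ bound linearly in $V(x)+V(y)$, since the single-chain drift only yields contraction of $\tilde V$ outside the larger set $(\bar{C}\times\mathcal{X})\cup(\mathcal{X}\times\bar{C})$, not outside $\bar{C}\times\bar{C}$ directly. I would handle this with a two-stage argument exploiting the additive structure of $\tilde V$ and the independence of the marginals prior to coupling: first wait for each marginal to reach $\bar{C}$ (with cost scaling like $V(x)$ or $V(y)$ individually via the single-chain drift), and then invoke Lemma \ref{lem:geom_tail} for each marginal to obtain state-independent geometric moments on the subsequent joint return times.
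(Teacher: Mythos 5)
Your route is genuinely different from the paper's, and substantially more work. The paper's proof is essentially two lines: Assumption~\ref{assumption:markovchain} together with \cite[Theorem 15.0.1]{Meyn12_book} gives $V$-uniform geometric ergodicity, hence $\|P^k(x,\cdot)-P^k(y,\cdot)\|_{\mathrm{TV}}\lesssim (V(x)+V(y))\rho^k$, and then Goldstein's maximal coupling theorem \cite{goldstein1979maximal} supplies a trajectory coupling in which, once the chains meet, they coalesce and $\P(X_k^x\neq X_k^y)$ equals that total-variation distance. The coupling-time tail and the $p$-th moment bound are then immediate. That one citation does all the work that your bivariate split-chain construction and drift analysis are trying to do by hand, so the constructive regeneration route is worth pursuing only if you need the explicit joint-regeneration structure for something else.

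Within your own route there is a real gap, and it is precisely the one you flag, but the fix you propose does not close it. Driving each marginal separately into $\bar C$ at cost $\lesssim V(x)$ and $\lesssim V(y)$ is fine (it is exactly what Lemma~\ref{lem:entrance-mgf} gives), but the two arrivals occur at different skeleton epochs, and ``invoking Lemma~\ref{lem:geom_tail} for each marginal'' only controls each marginal's \emph{own} return time to $\bar C$; it does not control the first epoch at which both lie in $\bar C$ \emph{simultaneously}. The chains can in principle ping-pong --- whenever one is in $\bar C$ the other has just left --- and nothing in your sketch rules that out. The standard repair is to route through a sublevel set $D_R:=\{(x,y):V(x)+V(y)\le R\}$ as an intermediary: the global bivariate drift $\tilde P^m\tilde V\le \lambda^m\tilde V+M$ yields $\E_{x,y}[a^{\sigma_{D_R}}]\lesssim V(x)+V(y)$ for suitably large $R$, and $D_R$ is petite for the (irreducible, aperiodic) product skeleton, so from $D_R$ there is a uniform positive probability of entering $\bar C\times\bar C$ within a bounded number of skeleton steps; iterating via the strong Markov property then gives $\E_{x,y}[a'^{\sigma_{\bar C\times\bar C}}]\lesssim V(x)+V(y)$. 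This is all doable, but it adds several lemmas (irreducibility/aperiodicity/petiteness for the product chain, the $D_R$-to-$\bar C\times\bar C$ step) that the paper's maximal-coupling argument entirely avoids.
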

By \cite[Theorem~15.0.1]{Meyn12_book}, the time-reversed kernel \(P^\ast\)
admits a drift function \(V'\). Moreover, for any \(\alpha \in (0,1)\), the
functions \(V^\alpha\) and \((V')^\alpha\) remain valid Lyapunov functions for
\(P\) and \(P^\ast\), respectively. Hence,
Lemma~\ref{lem:forward-meeting} applies with \(V^\alpha\) and
\((V')^\alpha\) in place of \(V\). These meeting-time bounds are combined with the moment condition \(\|h_i\|^{2+\delta}\le V\) via H\"older's inequality to bound \(\W_2^2(\nu_y,\nu_x)\). Particularly with \(\delta>1\), we can choose \(\alpha = (\delta-1)/\delta\). Applying H\"older's inequality again yields the desired
\(\mathcal O(n^{-1/2})\) rate.


\subsection{Theorem \ref{thm:MC-Wp} ($p\geq 2$): Regeneration Decomposition +  1-Dependent CLT Rates}\label{sec:outline-MC-Wp}
In this section, we assume the moment domination condition $\|h\|^{r}\le V$ for
some $r=p+q>2$, where $p$ and $q$ are the parameters as in
Theorem~\ref{thm:MC-Wp}. The proof outline follows three key steps: 

\medskip
\noindent\textbf{Step 1: Regeneration Decomposition.} Recall the regeneration indices $0=\tau_0<\tau_1<\tau_2<\cdots$ of the $m$-skeleton
of the split chain. Define the corresponding regeneration times in the original
time scale and the associated cycle lengths by
\begin{equation*}
T_i:=m\tau_i,\qquad \Lambda_i:=T_i-T_{i-1}=mL_i,\qquad i\ge 1.
\end{equation*}
Define
\begin{equation}\label{eq:def-gt-tv}
g:=\sum_{k=0}^\infty P^k h.
\end{equation}
Since $\|h\|^r\le V$ and the chain is geometrically ergodic under
Assumption~\ref{assumption:markovchain}, the series in \eqref{eq:def-gt-tv}
converges absolutely. Moreover, $\|g\|\lesssim V^{1/r}$ for all $t\ge 0$. Let $\mathcal F_t:=\sigma(x_0,\dots,x_t)$ be the canonical filtration and define $\xi_{t+1}:=g(x_{t+1})-Pg(x_t)$ for $t \geq 0$ and $M_n:=\sum_{t=0}^{n-1}\xi_{t+1}$ for $n\ge 0.$ Then $(M_n,\mathcal F_n)_{n\ge 0}$ is an $\R^d$-valued martingale, and the partial
sum admits the decomposition
\begin{equation}\label{eq:Sn-mart-tv}
S_n
=
g(x_0)-g(x_n)+M_n,
\qquad \forall n\ge 1.
\end{equation}
Since $\|g\|\lesssim V^{1/r}$, Assumption~\ref{assumption:markovchain} implies $\|S_n-M_n\|_{L_p} \in  \mathcal O(1)$. Next
define the \emph{cycle martingale increments}
\begin{equation*}
\widetilde M_i
:=M_{T_i}-M_{T_{i-1}}
=\sum_{t=T_{i-1}}^{T_i-1}\xi_{t+1},\qquad i\ge 1.
\end{equation*}
Let $K_n:=\min\{i\ge 1:\ T_i>n\}+1$ denote the index immediately following the first regeneration time strictly
after $n$. Since $T_{K_n-2}\le n<T_{K_n-1}<T_{K_n}$, we can decompose the
martingale at time $n$ as
\[
M_n
= \sum_{i=1}^{K_n}\widetilde M_i - \sum_{t=n}^{T_{K_n}-1}\xi_{t+1}
=: \widetilde S_n - R_n,
\]
where $\widetilde S_n:=\sum_{i=1}^{K_n}\widetilde M_i$ and
$R_n:=\sum_{t=n}^{T_{K_n}-1}\xi_{t+1}$. By Assumption~\ref{assumption:markovchain} and the geometric tail bounds for the
cycle lengths $\{L_i\}_{i\ge 1}$ from Lemma~\ref{lem:geom_tail}, the sequence
$\{\widetilde M_i\}_{i\ge 1}$ and the remainder term $R_n$ satisfy the moment
bounds stated below; the proof is deferred to Section~\ref{sec:iid_Mr}.
\begin{lemma}
\label{lem:iid_Mr}
$\{\widetilde M_i\}_{i\ge1}$ are 1-dependent. Moreover, if $\|h\|^r \leq V$ for some $r > 2$, we have $\E[\widetilde M_i]=0$ for any $i \geq 1$, $\sup_{i \geq 1}\mathbb E\|\widetilde M_i\|^{r}<\infty$ and $ \sup_{n \geq 1}\E[\|R_n\|^{r}] <\infty.$
\end{lemma}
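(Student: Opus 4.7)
The plan is to prove the three assertions—1-dependence, zero mean, and uniform $L^r$ bounds—separately, all leveraging the regenerative structure of the split chain (Section~\ref{sec:splitchain}), the martingale property of $(M_n)$, and the geometric tail bound from Lemma~\ref{lem:geom_tail}.

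For the 1-dependence of $\{\widetilde M_i\}_{i\ge 1}$, I would invoke the strong Markov property at each regeneration time. By Nummelin's construction, at every regeneration time $T_j$ the state $x_{T_j}$ is drawn afresh from $\nu$ independently of the past $\sigma\bigl((x_s,y_s):s<T_j\bigr)$, so the split chain splits cleanly into independent pre- and post-$T_j$ segments. The block $\widetilde M_i=\sum_{t=T_{i-1}}^{T_i-1}\xi_{t+1}$ is a measurable functional of the chain on $[T_{i-1},T_i]$, i.e., of the regenerative block $B_i:=(x_{T_{i-1}},\ldots,x_{T_i-1},\Lambda_i)$ augmented by the single boundary state $x_{T_i}$ (the first state of $B_{i+1}$). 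For $|i-j|\ge 2$, the blocks contributing to $\widetilde M_i$ and $\widetilde M_j$ lie on opposite sides of an intervening regeneration and are hence independent, giving the claimed 1-dependence.

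For the zero-mean claim, I would run an optional-stopping argument on the split chain with the filtration $\mathcal G_t:=\sigma\bigl((x_s,y_s):s<t\bigr)\vee\sigma(x_t)$. Since averaging $y_t$ out given $x_t$ recovers the original kernel, $\law(x_{t+1}\mid\mathcal G_t)=P(x_t,\cdot)$ and hence $\E[\xi_{t+1}\mid\mathcal G_t]=Pg_{t+1}(x_t)-Pg_{t+1}(x_t)=0$. So $(M_n)$ is an $\R^d$-valued martingale with respect to $\mathcal G$, and each $T_i$ is a $\mathcal G$-stopping time (its occurrence is determined by the levels $y_{km}$, all $\mathcal G_{T_i}$-measurable). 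To apply optional stopping I would verify
\[
\E\|M_{T_i}\|\;\le\;\sum_{t\ge 0}\|\xi_{t+1}\|_{L^r}\,\P(T_i>t)^{1-1/r}\;<\;\infty,
\]
combining $\|\xi_{t+1}\|_{L^r}\lesssim 1$ (from $\|g_t\|\lesssim V^{1/r}$, Jensen-based $PV^{1/r}\lesssim V^{1/r}$, and $\sup_t\E V(x_t)<\infty$ under Assumption~\ref{assumption:markovchain}) with the geometric tail $\P(T_i>t)\lesssim\rho^t$ from Lemma~\ref{lem:geom_tail}. Optional stopping then gives $\E M_{T_i}=\E M_{T_{i-1}}=0$, hence $\E\widetilde M_i=0$.

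For the uniform $L^r$ bounds, I would first establish the pointwise estimate $\|\xi_{t+1}\|^r\lesssim V(x_t)+V(x_{t+1})$ via $\|g_t\|\lesssim V^{1/r}$ and the bound on $PV^{1/r}$, and then use Jensen's inequality to get
\[
\|\widetilde M_i\|^r\;\lesssim\;\Lambda_i^{r-1}\sum_{t=T_{i-1}}^{T_i}V(x_t).
\]
For $i\ge 2$, regeneration makes the distribution of the block and its boundary state coincide with that of the first regenerative block started from $\nu$, reducing the expectation to $\E_\nu\bigl[\Lambda^{r-1}\sum_{t=0}^{\Lambda}V(x_t)\bigr]$. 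I would control this quantity using the geometric tail of $\Lambda$ from Lemma~\ref{lem:geom_tail} combined with standard Meyn--Tweedie-type drift estimates for regenerative functionals; the case $i=1$ is analogous using $\E V(x_0)<\infty$. For $R_n$, the summation range has length at most $\Lambda_{K_n-1}+\Lambda_{K_n}$ by the definition of $K_n$, so the same Jensen bound yields a uniform-in-$n$ finite expectation. The main technical obstacle is expected to be this $L^r$ moment bound for $r>2$: reconciling the geometric cycle-length tail with the $V$-dominated pointwise estimate requires a careful iteration of the drift inequality (or an appeal to polynomial/exponential-moment lemmas for regenerative functionals) rather than a direct invocation of $\pi(V)<\infty$; by contrast, the 1-dependence and zero-mean assertions follow more transparently from the split-chain regenerative structure and the martingale optional-stopping argument.
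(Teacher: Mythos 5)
Your 1-dependence argument is the same as the paper's (strong Markov property at the atom), and your zero-mean argument via optional stopping is a workable alternative: the paper instead verifies uniform integrability of the stopped martingale $\{M_{t\wedge T_i}\}$ through a Burkholder--Rosenthal bound and invokes an optional-sampling lemma, which yields the stronger conditional statement $\E[\widetilde M_i\mid\mathcal F_{T_{i-1}}]=0$ (needed elsewhere in the paper, though the lemma itself only asks for $\E[\widetilde M_i]=0$). One caveat, which the paper also glosses over: the filtration must be chosen so that $(M_t)$ is a martingale \emph{and} $T_i$ is a stopping time. With your $\mathcal G_t$ containing the level of the \emph{current} block (any $y_{km}$ with $km<t$), the within-block transitions for $m\ge 2$ are bridge kernels, not $P$, so $\E[\xi_{t+1}\mid\mathcal G_t]\neq 0$ in general; you need the level of a block revealed only at that block's end. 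Your integrability check ($\|\xi_{t+1}\|_{L^r}\lesssim 1$ plus the geometric tail of $T_i$) is fine, though you should also note the vanishing of $\E[\|M_t\|\mathds 1\{T_i>t\}]$, which follows from the same estimate.

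The genuine gap is the uniform $L^r$ bound, which is the heart of the lemma and which you explicitly defer to ``standard Meyn--Tweedie-type drift estimates.'' Your pointwise bound $\|\widetilde M_i\|^r\lesssim\Lambda_i^{r-1}\sum_{t=T_{i-1}}^{T_i}V(x_t)$ is valid, but taking expectations then requires controlling $\E_\nu\bigl[\Lambda^{r-1}\sum_{t\le\Lambda}V(x_t)\bigr]$, i.e.\ the joint behavior of powers of the cycle length and the unbounded $V$-values along the cycle. The geometric tail of $\Lambda$ alone does not give this, and decoupling (e.g.\ by Cauchy--Schwarz, or by bounding conditional moments of the remaining cycle length by $V(x_t)$) forces second moments of $V$ along the cycle, which the drift condition---stated only for $V$---does not provide; there is no off-the-shelf estimate that closes this. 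The paper's proof is structured precisely to avoid this: it keeps the exponent $1/r$ on $V$ inside the sum, proves by a drift recursion that $a_{i,j}:=\E[V(x_{T_{i-1}+jm})\mathds 1\{L_i>j\}]$ decays geometrically in $j$ (the key point being that $\{L_i>j\}$ is measurable at the block-start time $T_{i-1}+jm$, combined with $\P(L_i>j)\le b\rho^j$), establishes $\sup_{i}a_{i,0}<\infty$ by showing $\nu(V)\le P^mV(x^*)/\beta<\infty$ for some $x^*\in\bar C$ with $V(x^*)<\infty$ (a point your proposal does not address at all, and which itself needs a short accessibility argument), and then applies Minkowski's inequality in $L^r$ to the $V^{1/r}$-summands so that only \emph{first} moments of $V$ are ever used. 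Your treatment of $R_n$ inherits the same missing estimate, with the additional wrinkle that $K_n$ is a random, $n$-dependent index, so uniformity in $n$ must come from the same uniform-in-$i$ block bound rather than from a per-cycle computation.
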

We emphasize that the sequence $\{\widetilde M_i\}_{i\ge 1}$ is $1$-dependent but
not independent. Indeed, by construction, $\widetilde M_i$ depends on
the interval $[T_{i-1},T_i]$ and not merely on the
block $[T_{i-1},T_i-1]$. By Lemma~\ref{lem:iid_Mr}, we have
\begin{equation}\label{eq:triangle-reminder}
\|S_n-\widetilde S_n\|_{L_p}=\mathcal O(1),
\qquad
\W_{p}\bigl(\law(\tfrac{S_n}{\sqrt n}),
\law(\tfrac{\widetilde S_n}{\sqrt n})\bigr)
=\mathcal O(n^{-1/2}).
\end{equation}
\textbf{Remark:} Instead of employing a martingale decomposition as in \cite{srikant2024rates,wu2025uncertainty} and appealing to martingale CLT rates on $M_n$, we leverage geometric ergodicity and Nummelin’s splitting to partition the sum into $1$-dependent regeneration blocks. This yields general $\W_p$ bounds for all $p\ge2$. 

\medskip
\noindent\textbf{Step 2: Determinizing the Number of Blocks.}
The remaining obstacle is that \(\widetilde S_n\) contains a \emph{random} number \(K_n\) of regeneration blocks, whereas Theorem~\ref{thm:m-depend-p} is stated for sums with a deterministic number of terms. We therefore introduce the deterministic approximation
\[
k_n:=\Bigl\lfloor \frac{n}{\E[\Lambda_2]}\Bigr\rfloor,
\qquad
\bar S_n:=\sum_{i=1}^{k_n}\widetilde M_i.
\]
At first sight, one might try to condition on \(K_n\). However, this approach presents an additional challenge, since the event \(\{K_n=k\}\) depends on the entire regeneration pattern up to time \(n\), and conditioning on it destroys the simple dependence structure of the cycle increments. To circumvent this issue, we compare \(\widetilde S_n\) and \(\bar S_n\) via an odd--even decomposition of the regeneration blocks. Each resulting subsequence is then a martingale difference sequence, which allows us to invoke martingale inequalities, specifically the Rosenthal--Burkholder inequality. We defer the technical calculations to Appendix~\ref{sec:proof-covariance-mismatch}; the resulting estimate is
\begin{equation*}
\|\widetilde  S_n - \bar S_n\|_{L_p} \in \mathcal{O}(n^{1/4}),\qquad \W_{p}\big(\law(\tfrac{\widetilde  S_n}{\sqrt{n}}),\law(\tfrac{\bar  S_n}{\sqrt{n}})\big) \in \mathcal{O}(n^{-1/4}).
\end{equation*}
\noindent\textbf{Remark:}  
We note that the resulting error of replacing $K_n$ by $k_n$ is of order $\mathcal O(n^{-1/4})$, which is no worse
than the best available $\W_p$ ($p\ge 2$) CLT rate for $1$-dependent sequences in
Theorem~\ref{thm:m-depend-p}.

\medskip
\noindent\textbf{Step 3: Applying the 1-dependent CLT Rates and Matching the Covariance.}
In this step, we control the covariance discrepancy between the original sum and
its deterministic cycle-block approximation. Specifically, we show (see
Section~\ref{sec:proof-covariance-mismatch} for details) that
\begin{equation}\label{eq:cov-mismatch}
\big\|\Sigma_n-\frac{1}{n}\operatorname{Var}(\bar S_n)\big\|
\in \mathcal O(n^{-1/2}),
\end{equation}
By Lemma~\ref{lem:gauss-gauss}, this covariance mismatch further implies
\begin{equation*}
\W_p\big(\mathcal N(0,\Sigma_n),\mathcal N(0,\frac{1}{n}\operatorname{Var}(\bar S_n))\big)
=O(n^{-1/4}).
\end{equation*}
Next, by Weyl's inequality, Lemma~\ref{lem:Sigma-exists1}, and the covariance
mismatch bound~\eqref{eq:cov-mismatch}, we obtain $\lambda_{\min}(\frac{1}{n}\operatorname{Var}(\bar S_n)) \in \Omega(1)$ and $\lambda_{\max}(\frac{1}{n}\operatorname{Var}(\bar S_n)) \in \mathcal O(1).$ Moreover, since $\E[\widetilde M_i]=0$ for all $i\ge 1$, we apply the
$1$-dependent $\W_p$--CLT rate in Theorem~\ref{thm:m-depend-p} to obtain
\begin{align*}
\W_{p}\big(\law(\tfrac{\bar  S_n}{\sqrt{n}}),\mathcal N(0, \frac{1}{n}\operatorname{Var}(\bar S_n))\big) \in \mathcal{O}(n^{-\frac{p+q-2}{2(2p+q-2)}}).
\end{align*}
Finally, using the triangle inequality and combining the bounds from Steps~1--2 give
\begin{align*}
\W_{p}\big(\law(\tfrac{S_n}{\sqrt{n}}),\mathcal N(0, \Sigma_n)\big) \leq& \W_{p}\big(\law(\tfrac{S_n}{\sqrt{n}}),\law(\tfrac{\widetilde S_n}{\sqrt{n}})\big) + \W_{p}\big(\law(\tfrac{\widetilde S_n}{\sqrt{n}}),\law(\tfrac{\bar S_n}{\sqrt{n}})\big) \\
&+ \W_{p}\big(\law(\tfrac{\bar S_n}{\sqrt{n}}),\mathcal N(0, \frac{1}{n}\operatorname{Var}(\bar S_n))\big) + \W_{p}\big(\mathcal N(0, \frac{1}{n}\operatorname{Var}(\bar S_n)),\mathcal N(0, \Sigma_n)\big)\\
\lesssim& n^{-1/2} + n^{-1/4} + n^{-\frac{p+q-2}{2(2p+q-2)}} + n^{-1/4} \in \mathcal O(n^{-\frac{p+q-2}{2(2p+q-2)}}).
\end{align*}
\textbf{Remark:} As discussed following Theorem~\ref{thm:MC-Wp}, the rate
$\mathcal O(n^{-\frac{p+q-2}{2(2p+q-2)}})$ might not be
optimal. Indeed, the decomposition above reveals three terms that prevent the
overall bound from reaching $\mathcal O(n^{-1/2})$. Among them, the
dominant contribution is
$\mathcal O(n^{-\frac{p+q-2}{2(2p+q-2)}})$, which comes from our
(best available) $\W_p$ CLT rate for $M$-dependent sequences. This term could
potentially be improved by extending the exchangeable-pair framework of
\cite{bonis2020stein} to dependent data. 

The remaining two $\mathcal O(n^{-1/4})$ terms both arise from the error incurred
when approximating the random cycle count $K_n$ by a deterministic index $k_n$. Recall that the
original cycle-block sum is $\widetilde S_n:=\sum_{i=1}^{K_n}\widetilde M_i,$ where the increments $\{\widetilde M_i\}_{i\ge1}$ are $1$-dependent. 
A natural alternative to deterministic approximation of $K_n$
is to condition on $K_n$, apply the $1$-dependent CLT rate to the
conditional sum $\sum_{i=1}^{K_n}\widetilde M_i$, and then average over $K_n$.
However, this approach is hindered by the fact that the summands themselves depend on $K_n$. Specifically,
each $\widetilde M_i$ is measurable with respect to the trajectory up to time
$T_i$, and the event $\{K_n=k\}$ imposes global constraints on the regeneration
times. Consequently, once conditioning on
$\{K_n=k\}$ the increments $\{\widetilde M_i\}_{i=1}^{k}$ lose the tractable
$1$-dependence structure. 

To obtain the optimal Gaussian approximation rate for $S_n$, it may be
necessary to analyze $\widetilde S_n$ directly, rather than reducing to a fixed-length
sum. This perspective is inspired by renewal theory. In particular, sums of the
form
\begin{equation*}
V_s:=\sum_{i=1}^{N_s} Y_i,
\qquad
N_s:=\max\Bigl\{n\ge 0:\ \sum_{i=1}^n T_i\le s\Bigr\},
\end{equation*}
are known as \emph{compound renewal processes}~\citep{cox1962renewal,cox2017theory,malinovskii2021level}. Our $\widetilde S_n$ is
closely related to this framework. Existing CLT-rate results for compound renewal processes are largely confined to
the univariate setting and focus on metrics such as the Kolmogorov distance~\citep{cox1962renewal,cox2017theory,malinovskii2021level}.
Developing
multivariate CLT rates for compound renewal processes---particularly in
transportation metrics such as $\W_p$---is an interesting direction for future
work.

\section{Conclusion}\label{sec:conclusion}
In this work, we establish $\W_p$ ($p\ge 1$) CLT convergence rates---optimal in
some regimes---for multivariate locally dependent sequences and geometric ergodic Markov chains
under mild moment condition.
 We also
discuss an application of our optimal $\W_1$ rates to multivariate
$U$-statistics. These results may be broadly useful for providing quantitative
uncertainty guarantees across a wide range of problems in machine learning and operations research.

From a technical perspective, we extend Rai\v{c}'s framework~\citep{raivc2018multivariate} and obtain a tractable bound for the
$\W_1$ Gaussian approximation error under dependence. We then leverage this bound to establish optimal $\W_1$ rates for both locally dependent data
and geometric ergodic Markov chains. We expect this bound to be useful more broadly for
deriving rates under other dependent structures. In addition, we show that the
regeneration time of the split chain associated with a geometrically ergodic
Markov chain has a geometric tail, without assuming strong aperiodicity or other
restrictive conditions commonly imposed in the literature; this result may be of
independent interest.

There are several natural directions for future work:
\begin{itemize}
    \item For locally dependent data, it remains open whether one can achieve the
\(\mathcal O(n^{-\frac{p+q-2}{2p}})\) \(\W_p\)-CLT rate for \(p\ge 2\),
matching the best known rate in the independent case \citep{bonis2020stein}.
The argument in \cite{bonis2020stein} relies on an exchangeable-pair
construction tailored to independence, so a natural direction is to extend
this technique to dependent settings.
\item In Theorem~\ref{thm:MC-W1}, we require a $V$-dominated $(2+\delta)$-moment
condition with $\delta>1$ in order to obtain the optimal
$\mathcal O(n^{-1/2})$ $\W_1$ CLT rate for Markov chains. 
Determining whether a $V$-dominated third
moment alone would suffice to achieve this optimal rate remains an important challenge.
\item As discussed in Section~\ref{sec:outline-MC-Wp}, one potential avenue for improving $\W_p$ ($p\ge 2$) CLT rates for Markov chain is to study $\W_p$ CLT rates for
multivariate compound renewal processes. This would likely require extending existing renewal
theory---which are largely confined to the univariate setting and to
metrics such as the Kolmogorov distance---to multivariate transportation metrics
(e.g., \cite{cox1962renewal,cox2017theory,malinovskii2021level}).
\item Studying CLT rates for Markov chains under weaker assumptions, such as
subgeometric ergodicity, is an important direction for future work. We expect
that some of the techniques developed here, especially the split-chain
framework, can be extended to this setting. Related work
\citep{douc2008bounds} established CLT rates for subgeometrically ergodic
Markov chains via splitting, but only in the univariate setting, for the
Kolmogorov distance, and under additional assumptions such as strong
aperiodicity. It would be interesting to use the more general split-chain
theory developed in this paper to obtain analogous multivariate results.
\item This paper focuses mainly on the dependence on the sample size \(n\). Although
one could in principle trace the proof more carefully to make the dimension
dependence explicit, the resulting bounds would likely still be suboptimal.
Obtaining sharp dimension-dependent rates in \(d\) is therefore a separate and
worthwhile problem, especially for high-dimensional applications. Moreover, for
\(p\ge 2\), prior work suggests that optimal dimension dependence typically
requires additional structural assumptions, such as nonlattice or
Poincar\'e-type conditions \citep{bonis2020stein,bonis2024improved}. Studying
sharp dimension dependence for structured dependent data under such assumptions
is an important direction for future research.
\end{itemize}
%
%
%

\ACKNOWLEDGMENT{Q. Xie and Y. Zhang are supported in part by National Science Foundation (NSF) grants CNS-1955997 and
EPCN-2339794 and EPCN-2432546.}


\bibliographystyle{informs2014} 

\bibliography{main} 

@book{Meyn12_book,
author={Meyn, Sean P. and Tweedie, Richard L.},
title={Markov Chains and Stochastic Stability},
series={Cambridge Mathematical Library},
year={2009},
edition={2nd},
publisher={Cambridge University Press},
address={Cambridge},
isbn={9780521731829},
url={https://doi.org/10.1017/CBO9780511626630}
}

@article{bonis2020stein,
  title={Stein's method for normal approximation in Wasserstein distances with application to the multivariate central limit theorem},
  author={Bonis, Thomas},
  journal={Probability Theory and Related Fields},
  volume={178},
  number={3},
  pages={827--860},
  year={2020},
  publisher={Springer}
}

@article{raivc2018multivariate,
  title={A multivariate central limit theorem for Lipschitz and smooth test functions},
  author={Rai{\v{c}}, Martin},
  journal={arXiv preprint arXiv:1812.08268},
  year={2018}
}

@article{gallouet2018regularity,
  title={Regularity of solutions of the Stein equation and rates in the multivariate central limit theorem},
  author={Gallou{\"e}t, Thomas and Mijoule, Guillaume and Swan, Yvik},
  journal={arXiv preprint arXiv:1805.01720},
  year={2018}
}

@article{baxendale2005renewal,
  title={Renewal theory and computable convergence rates for geometrically ergodic Markov chains},
  author={Baxendale, Peter H},
  year={2005}
}

@article{wu2025uncertainty,
  title={Uncertainty quantification for Markov chains with application to temporal difference learning},
  author={Wu, Weichen and Wei, Yuting and Rinaldo, Alessandro},
  year={2025},
  publisher={stat. ml}
}

@book{douc2018markov,
  title={Markov chains},
  author={Douc, Randal and Moulines, Eric and Priouret, Pierre and Soulier, Philippe},
  volume={4},
  year={2018},
  publisher={Springer}
}

@inproceedings{zhang2024prelimit,
  title={Prelimit coupling and steady-state convergence of constant-stepsize nonsmooth contractive SA},
  author={Zhang, Yixuan and Huo, Dongyan and Chen, Yudong and Xie, Qiaomin},
  booktitle={Abstracts of the 2024 ACM SIGMETRICS/IFIP PERFORMANCE Joint International Conference on Measurement and Modeling of Computer Systems},
  pages={35--36},
  year={2024}
}

@article{lemanczyk2021general,
  title={General Bernstein-like inequality for additive functionals of Markov chains},
  author={Lema{\'n}czyk, Micha{\l}},
  journal={Journal of Theoretical Probability},
  volume={34},
  number={3},
  pages={1426--1454},
  year={2021},
  publisher={Springer}
}

@book{puterman2014markov,
  title={Markov decision processes: discrete stochastic dynamic programming},
  author={Puterman, Martin L},
  year={2014},
  publisher={John Wiley \& Sons}
}

@book{sutton1998reinforcement,
  title={Reinforcement learning: An introduction},
  author={Sutton, Richard S and Barto, Andrew G and others},
  volume={1},
  number={1},
  year={1998},
  publisher={MIT press Cambridge}
}

@article{dieuleveut2020bridging,
  title={Bridging the gap between constant step size stochastic gradient descent and Markov chains},
  author={Dieuleveut, Aymeric and Durmus, Alain and Bach, Francis},
  year={2020}
}

@article{yu2021analysis,
  title={An analysis of constant step size SGD in the non-convex regime: Asymptotic normality and bias},
  author={Yu, Lu and Balasubramanian, Krishnakumar and Volgushev, Stanislav and Erdogdu, Murat A},
  journal={Advances in Neural Information Processing Systems},
  volume={34},
  pages={4234--4248},
  year={2021}
}

@article{zhang2025piecewise,
  title={A Piecewise Lyapunov Analysis of Sub-quadratic SGD: Applications to Robust and Quantile Regression},
  author={Zhang, Yixuan and Huo, Dongyan and Chen, Yudong and Xie, Qiaomin},
  journal={ACM SIGMETRICS Performance Evaluation Review},
  volume={53},
  number={1},
  pages={85--87},
  year={2025},
  publisher={ACM New York, NY, USA}
}

@article{hastings1970monte,
  title={Monte Carlo sampling methods using Markov chains and their applications},
  author={Hastings, W Keith},
  year={1970},
  publisher={Oxford University Press}
}

@article{ho2020denoising,
  title={Denoising diffusion probabilistic models},
  author={Ho, Jonathan and Jain, Ajay and Abbeel, Pieter},
  journal={Advances in neural information processing systems},
  volume={33},
  pages={6840--6851},
  year={2020}
}

@book{asmussen2003applied,
  title={Applied probability and queues},
  author={Asmussen, S{\o}ren},
  year={2003},
  publisher={Springer}
}

@book{Meyn2007,
  title={Control Techniques for Complex Networks},
  author={Meyn, Sean P.},
  year={2007},
  publisher={Cambridge University Press}
}

@article{song1993inventory,
  title={Inventory control in a fluctuating demand environment},
  author={Song, Jing-Sheng and Zipkin, Paul},
  journal={Operations Research},
  volume={41},
  number={2},
  pages={351--370},
  year={1993},
  publisher={INFORMS}
}

@article{srikant2024rates,
  title={Rates of convergence in the central limit theorem for markov chains, with an application to td learning},
  author={Srikant, Rayadurgam},
  journal={Mathematics of Operations Research},
  year={2025},
  publisher={INFORMS}
}

@article{samsonov2024gaussian,
  title={Gaussian approximation and multiplier bootstrap for polyak-ruppert averaged linear stochastic approximation with applications to td learning},
  author={Samsonov, Sergey and Moulines, Eric and Shao, Qi-Man and Zhang, Zhuo-Song and Naumov, Alexey},
  journal={Advances in Neural Information Processing Systems},
  volume={37},
  pages={12408--12460},
  year={2024}
}

@book{villani2008optimal,
  title={Optimal transport: old and new},
  author={Villani, C{\'e}dric and others},
  volume={338},
  year={2008},
  publisher={Springer}
}

@inproceedings{zhang2024constant,
  title={Constant Stepsize Q-learning: Distributional Convergence, Bias and Extrapolation},
  author={Zhang, Yixuan and Xie, Qiaomin},
  booktitle={Reinforcement Learning Conference},
   year={2024}
}

@article{huo2023bias,
  title={Bias and Extrapolation in Markovian Linear Stochastic Approximation with Constant Stepsizes},
  author={Huo, Dongyan and Chen, Yudong and Xie, Qiaomin},
  journal={ACM SIGMETRICS Performance Evaluation Review},
  volume={51},
  number={1},
  pages={81--82},
  year={2023},
  publisher={ACM New York, NY, USA}
}

@article{huo2024collusion,
  title={The collusion of memory and nonlinearity in stochastic approximation with constant stepsize},
  author={Huo, Dongyan Lucy and Zhang, Yixuan and Chen, Yudong and Xie, Qiaomin},
  journal={Advances in Neural Information Processing Systems},
  volume={37},
  pages={21699--21762},
  year={2024}
}

@article{nummelin1978splitting,
  title={A splitting technique for Harris recurrent Markov chains},
  author={Nummelin, Esa},
  journal={Zeitschrift f{\"u}r Wahrscheinlichkeitstheorie und verwandte Gebiete},
  volume={43},
  number={4},
  pages={309--318},
  year={1978},
  publisher={Springer}
}

@incollection{baldi1989normal,
  title={A normal approximation for the number of local maxima of a random function on a graph},
  author={Baldi, Pierre and Rinott, Yosef and Stein, Charles},
  booktitle={Probability, statistics, and mathematics},
  pages={59--81},
  year={1989},
  publisher={Elsevier}
}

@article{goldstein1996multivariate,
  title={Multivariate normal approximations by Stein's method and size bias couplings},
  author={Goldstein, Larry and Rinott, Yosef},
  journal={Journal of Applied Probability},
  volume={33},
  number={1},
  pages={1--17},
  year={1996},
  publisher={Cambridge University Press}
}

@book{chen2010normal,
  title={Normal approximation by Stein’s method},
  author={Chen, Louis HY and Goldstein, Larry and Shao, Qi-Man},
  year={2010},
  publisher={Springer Science \& Business Media}
}

@article{bobkov2024fourier,
  title={Fourier analytic bounds for Zolotarev distances, and applications to empirical measures},
  author={Bobkov, Sergey G and Ledoux, Michel},
  journal={Preprint},
  year={2024}
}

@article{bobkov2024quantified,
  title={Quantified Cram$\backslash$'er-Wold Continuity Theorem for the Kantorovich Transport Distance},
  author={Bobkov, Sergey G and G{\"o}tze, Friedrich},
  journal={arXiv preprint arXiv:2412.10276},
  year={2024}
}

@article{liu2023wasserstein,
  title={Wasserstein-p bounds in the central limit theorem under local dependence},
  author={Liu, Tianle and Austern, Morgane},
  journal={Electronic Journal of Probability},
  volume={28},
  pages={1--47},
  year={2023},
  publisher={The Institute of Mathematical Statistics and the Bernoulli Society}
}

@article{roberts1999bounds,
  title={Bounds on regeneration times and convergence rates for Markov chains},
  author={Roberts, Gareth O and Tweedie, Richard L},
  journal={Stochastic Processes and their applications},
  volume={80},
  number={2},
  pages={211--229},
  year={1999},
  publisher={Elsevier}
}

@article{rosenthal1995minorization,
  title={Minorization conditions and convergence rates for Markov chain Monte Carlo},
  author={Rosenthal, Jeffrey S},
  journal={Journal of the American Statistical Association},
  volume={90},
  number={430},
  pages={558--566},
  year={1995},
  publisher={Taylor \& Francis}
}

@inproceedings{douc2008bounds,
  title={Bounds on regeneration times and limit theorems for subgeometric Markov chains},
  author={Douc, Randal and Guillin, Arnaud and Moulines, Eric},
  booktitle={Annales de l'IHP Probabilit{\'e}s et statistiques},
  volume={44},
  number={2},
  pages={239--257},
  year={2008}
}

@article{rinott1994normal,
  title={On normal approximation rates for certain sums of dependent random variables},
  author={Rinott, Yosef},
  journal={Journal of Computational and Applied Mathematics},
  volume={55},
  number={2},
  pages={135--143},
  year={1994},
  publisher={Elsevier}
}

@article{romano2000more,
  title={A more general central limit theorem for m-dependent random variables with unbounded m},
  author={Romano, Joseph P and Wolf, Michael},
  journal={Statistics \& probability letters},
  volume={47},
  number={2},
  pages={115--124},
  year={2000},
  publisher={Elsevier}
}

@inproceedings{diananda1955central,
  title={The central limit theorem for m-dependent variables},
  author={Diananda, PH},
  booktitle={Mathematical Proceedings of the Cambridge Philosophical Society},
  volume={51},
  number={1},
  pages={92--95},
  year={1955},
  organization={Cambridge University Press}
}

@article{fang2023p,
  title={From p-Wasserstein bounds to moderate deviations},
  author={Fang, Xiao and Koike, Yuta},
  journal={Electronic Journal of Probability},
  volume={28},
  pages={1--52},
  year={2023},
  publisher={The Institute of Mathematical Statistics and the Bernoulli Society}
}

@article{powers1970free,
  title={Free states of the canonical anticommutation relations},
  author={Powers, Robert T and St{\o}rmer, Erling},
  journal={Communications in Mathematical Physics},
  volume={16},
  number={1},
  pages={1--33},
  year={1970},
  publisher={Springer}
}

@article{breiman2001random,
  title={Random forests},
  author={Breiman, Leo},
  journal={Machine learning},
  volume={45},
  number={1},
  pages={5--32},
  year={2001},
  publisher={Springer}
}

@article{mentch2016quantifying,
  title={Quantifying uncertainty in random forests via confidence intervals and hypothesis tests},
  author={Mentch, Lucas and Hooker, Giles},
  journal={Journal of Machine Learning Research},
  volume={17},
  number={26},
  pages={1--41},
  year={2016}
}

@article{breiman1996bagging,
  title={Bagging predictors},
  author={Breiman, Leo},
  journal={Machine learning},
  volume={24},
  number={2},
  pages={123--140},
  year={1996},
  publisher={Springer}
}

@article{bickel2008covariance,
  title={Covariance regularization by thresholding},
  author={Bickel, Peter J and Levina, Elizaveta},
  year={2008}
}

@article{bickel2008regularized,
  title={Regularized estimation of large covariance matrices},
  author={Bickel, Peter J and Levina, Elizaveta},
  year={2008}
}

@article{dempster1972covariance,
  title={Covariance selection},
  author={Dempster, Arthur P},
  journal={Biometrics},
  pages={157--175},
  year={1972},
  publisher={JSTOR}
}

@article{goldstein1979maximal,
  title={Maximal coupling},
  author={Goldstein, Sheldon},
  journal={Zeitschrift f{\"u}r Wahrscheinlichkeitstheorie und verwandte Gebiete},
  volume={46},
  number={2},
  pages={193--204},
  year={1979},
  publisher={Springer}
}

@inproceedings{rio2009upper,
  title={Upper bounds for minimal distances in the central limit theorem},
  author={Rio, Emmanuel},
  booktitle={Annales de l'IHP Probabilit{\'e}s et statistiques},
  volume={45},
  number={3},
  pages={802--817},
  year={2009}
}

@article{pinelis1994optimum,
  title={Optimum bounds for the distributions of martingales in Banach spaces},
  author={Pinelis, Iosif},
  journal={The Annals of Probability},
  pages={1679--1706},
  year={1994},
  publisher={JSTOR}
}

@article{panaretos2019statistical,
  title={Statistical aspects of Wasserstein distances},
  author={Panaretos, Victor M and Zemel, Yoav},
  journal={Annual review of statistics and its application},
  volume={6},
  number={1},
  pages={405--431},
  year={2019},
  publisher={Annual Reviews}
}

@incollection{kuhn2019wasserstein,
  title={Wasserstein distributionally robust optimization: Theory and applications in machine learning},
  author={Kuhn, Daniel and Esfahani, Peyman Mohajerin and Nguyen, Viet Anh and Shafieezadeh-Abadeh, Soroosh},
  booktitle={Operations research \& management science in the age of analytics},
  pages={130--166},
  year={2019},
  publisher={Informs}
}

@book{petrov2012sums,
  title={Sums of independent random variables},
  author={Petrov, Valentin V},
  volume={82},
  year={2012},
  publisher={Springer Science \& Business Media}
}

@article{bobkov2013entropic,
  title={Entropic approach to E. Rio’s central limit theorem for W2 transport distance},
  author={Bobkov, Sergey G},
  journal={Statistics \& Probability Letters},
  volume={83},
  number={7},
  pages={1644--1648},
  year={2013},
  publisher={Elsevier}
}

@article{kipf2016semi,
  title={Semi-supervised classification with graph convolutional networks},
  author={Kipf, TN},
  journal={arXiv preprint arXiv:1609.02907},
  year={2016}
}

@article{hamilton2017inductive,
  title={Inductive representation learning on large graphs},
  author={Hamilton, Will and Ying, Zhitao and Leskovec, Jure},
  journal={Advances in neural information processing systems},
  volume={30},
  year={2017}
}

@article{law1984confidence,
  title={Confidence intervals for steady-state simulations: I. A survey of fixed sample size procedures},
  author={Law, Averill M and Kelton, W David},
  journal={Operations Research},
  volume={32},
  number={6},
  pages={1221--1239},
  year={1984},
  publisher={INFORMS}
}

@incollection{hoeffding1992class,
  title={A class of statistics with asymptotically normal distribution},
  author={Hoeffding, Wassily},
  booktitle={Breakthroughs in statistics: Foundations and basic theory},
  pages={308--334},
  year={1992},
  publisher={Springer}
}

@article{callaert1978berry,
  title={The Berry-Esseen theorem for U-statistics},
  author={Callaert, Herman and Janssen, Paul},
  journal={The Annals of Statistics},
  pages={417--421},
  year={1978},
  publisher={JSTOR}
}

@article{gotze1987approximations,
  title={Approximations for multivariate U-statistics},
  author={G{\"o}tze, Friedrich},
  journal={Journal of multivariate analysis},
  volume={22},
  number={2},
  pages={212--229},
  year={1987},
  publisher={Elsevier}
}

@article{chen2018gaussian,
  title={Gaussian and bootstrap approximations for high-dimensional U-statistics and their applications},
  author={Chen, Xiaohui},
  year={2018}
}

@article{fang2019wasserstein,
  title={Wasserstein-2 bounds in normal approximation under local dependence},
  author={Fang, Xiao},
  year={2019}
}

@article{matthes2009family,
  title={A family of nonlinear fourth order equations of gradient flow type},
  author={Matthes, Daniel and McCann, Robert J and Savar{\'e}, Giuseppe},
  journal={Communications in Partial Differential Equations},
  volume={34},
  number={11},
  pages={1352--1397},
  year={2009},
  publisher={Taylor \& Francis}
}

@book{malinovskii2021level,
  title={Level-crossing problems and inverse Gaussian distributions: closed-form results and approximations},
  author={Malinovskii, Vsevolod K},
  year={2021},
  publisher={Chapman and Hall/CRC}
}

@article{cox1962renewal,
  title={Renewal theory},
  author={Cox, David Roxbee},
  journal={Chapman and Hall},
  year={1962}
}

@book{cox2017theory,
  title={The theory of stochastic processes},
  author={Cox, David Roxbee},
  year={2017},
  publisher={Routledge}
}

@article{bentkus2003dependence,
  title={On the dependence of the Berry--Esseen bound on dimension},
  author={Bentkus, Vidmantas},
  journal={Journal of Statistical Planning and Inference},
  volume={113},
  number={2},
  pages={385--402},
  year={2003},
  publisher={Elsevier}
}

@article{chen2019randomized,
  title={Randomized incomplete U-statistics in high dimensions},
  author={Chen, Xiaohui and Kato, Kengo},
  year={2019}
}

@article{muller1997integral,
  title={Integral probability metrics and their generating classes of functions},
  author={M{\"u}ller, Alfred},
  journal={Advances in applied probability},
  volume={29},
  number={2},
  pages={429--443},
  year={1997},
  publisher={Cambridge University Press}
}

@article{adamczak2008tail,
  title={A tail inequality for suprema of unbounded empirical processes with applications to Markov chains},
  author={Adamczak, Radoslaw},
  year={2008}
}

@article{adamczak2015exponential,
  title={Exponential concentration inequalities for additive functionals of Markov chains},
  author={Adamczak, Rados{\l}aw and Bednorz, Witold},
  journal={ESAIM: Probability and Statistics},
  volume={19},
  pages={440--481},
  year={2015}
}

@article{bertail2018new,
  title={New Bernstein and Hoeffding type inequalities for regenerative Markov chains},
  author={Bertail, Patrice and Cio{\l}ek, Gabriela},
  year={2018}
}

@article{bolthausen1982berry,
  title={The Berry-Esseen theorem for strongly mixing Harris recurrent Markov chains},
  author={Bolthausen, Erwin},
  journal={Zeitschrift f{\"u}r Wahrscheinlichkeitstheorie und verwandte Gebiete},
  volume={60},
  number={3},
  pages={283--289},
  year={1982},
  publisher={Springer}
}

@article{bolthausen1980berry,
  title={The Berry-Esseen theorem for functionals of discrete Markov chains},
  author={Bolthausen, Erwin},
  journal={Zeitschrift f{\"u}r Wahrscheinlichkeitstheorie und verwandte Gebiete},
  volume={54},
  number={1},
  pages={59--73},
  year={1980},
  publisher={Springer}
}

@article{mykland1995regeneration,
  title={Regeneration in Markov chain samplers},
  author={Mykland, Per and Tierney, Luke and Yu, Bin},
  journal={Journal of the American Statistical Association},
  volume={90},
  number={429},
  pages={233--241},
  year={1995},
  publisher={Taylor \& Francis}
}

@article{jones2001honest,
  title={Honest exploration of intractable probability distributions via Markov chain Monte Carlo},
  author={Jones, Galin L and Hobert, James P},
  journal={Statistical Science},
  pages={312--334},
  year={2001},
  publisher={JSTOR}
}

@book{nourdin2012normal,
  title={Normal approximations with Malliavin calculus: from Stein's method to universality},
  author={Nourdin, Ivan and Peccati, Giovanni},
  volume={192},
  year={2012},
  publisher={Cambridge University Press}
}

@article{barbour1986asymptotic,
  title={Asymptotic expansions based on smooth functions in the central limit theorem},
  author={Barbour, Andrew D},
  journal={Probability Theory and Related Fields},
  volume={72},
  number={2},
  pages={289--303},
  year={1986},
  publisher={Springer}
}

@article{ibragimov1966accuracy,
  title={On the accuracy of Gaussian approximation to the distribution functions of sums of independent variables},
  author={Ibragimov, IA},
  journal={Theory of Probability \& Its Applications},
  volume={11},
  number={4},
  pages={559--579},
  year={1966},
  publisher={SIAM}
}

@article{chernozhukov2013gaussian,
  title={Gaussian approximations and multiplier bootstrap for maxima of sums of high-dimensional random vectors},
  author={Chernozhukov, Victor and Chetverikov, Denis and Kato, Kengo},
  journal={The Annals of Statistics},
  pages={2786--2819},
  year={2013},
  publisher={JSTOR}
}

@article{raivc2004multivariate,
  title={A multivariate CLT for decomposable random vectors with finite second moments},
  author={Rai{\v{c}}, Martin},
  journal={Journal of Theoretical Probability},
  volume={17},
  number={3},
  pages={573--603},
  year={2004},
  publisher={Springer}
}

@article{barbour1989central,
  title={A central limit theorem for decomposable random variables with applications to random graphs},
  author={Barbour, Andrew D and Karo{\'n}ski, Michal and Ruci{\'n}ski, Andrzej},
  journal={Journal of Combinatorial Theory, Series B},
  volume={47},
  number={2},
  pages={125--145},
  year={1989},
  publisher={Elsevier}
}

@article{bonis2024improved,
  title={Improved rates of convergence for the multivariate Central Limit Theorem in Wasserstein distance},
  author={Bonis, Thomas},
  journal={Electronic Journal of Probability},
  volume={29},
  pages={1--18},
  year={2024},
  publisher={The Institute of Mathematical Statistics and the Bernoulli Society}
}

@article{shi2021timing,
  title={Timing it right: Balancing inpatient congestion vs. readmission risk at discharge},
  author={Shi, Pengyi and Helm, Jonathan E and Deglise-Hawkinson, Jivan and Pan, Julian},
  journal={Operations Research},
  volume={69},
  number={6},
  pages={1842--1865},
  year={2021},
  publisher={INFORMS}
}

@article{varma2023dynamic,
  title={Dynamic pricing and matching for two-sided queues},
  author={Varma, Sushil Mahavir and Bumpensanti, Pornpawee and Maguluri, Siva Theja and Wang, He},
  journal={Operations Research},
  volume={71},
  number={1},
  pages={83--100},
  year={2023},
  publisher={INFORMS}
}

@article{ben2025data,
  title={Data-driven policies for the online ride-hailing problem with fairness},
  author={Ben-Gal, Shachaf and Tzur, Michal},
  journal={Transportation Science},
  volume={59},
  number={3},
  pages={647--669},
  year={2025},
  publisher={INFORMS}
}

@article{chan2026optimizing,
  title={Optimizing Interhospital Patient Transfer Decisions: A Queueing Network Approach},
  author={Chan, Timothy CY and Park, Jangwon and Pogacar, Frances and Sarhangian, Vahid and Hellsten, Erik and Razak, Fahad and Verma, Amol},
  journal={Manufacturing \& Service Operations Management},
  year={2026},
  publisher={INFORMS}
}




\ECSwitch
\ECHead{Online Supplement for ``Wasserstein-p Central Limit Theorem Rates:
From Local Dependence to Markov Chains''}

\section{Proof of Theorem~\ref{thm:local-depend-1}}\label{sec:local-depend-1}
In this section we complete the proof of Theorem~\ref{thm:local-depend-1} by providing only the additional details
omitted from the proof outline in Sections~\ref{sec:ourS} and \ref{sec:outline-local-depend-1}.
\subsection{Proof of Proposition \ref{prop:newS}}\label{sec:proof-newS}
Let \[ \Xi := I\times\mathbb{R}^d\times[0,1]\times\mathbb{R}^d, \qquad I:=\{0,\dots,n-1\}. \] For each \((i,x)\in I\times\mathbb{R}^d\), define a conditioned version of the sample by \[ \bigl(\widetilde U^{(i,x)}_0,\dots,\widetilde U^{(i,x)}_{n-1}\bigr) \stackrel{d}{=} (U_0,\dots,U_{n-1}) \mid U_i=x, \] so that \(\widetilde U^{(i,x)}_i=x\) almost surely. Set \[ \widetilde V_{i,x}:=\sum_{j=0}^{n-1}\widetilde U^{(i,x)}_j, \qquad Y_{i,x}:=W-\widetilde V_{i,x}. \] Thus, \(\widetilde V_{i,x}\) is the total sum obtained after pinning \(U_i\) at \(x\), while \(Y_{i,x}\) records the discrepancy between this conditional sum and the original sum \(W\). For \(\xi=(i,x,t,y)\in\Xi\), define \(V_\xi\) by \[ V_{(i,x,0,y)}\stackrel{d}{=}\widetilde V_{i,x} \qquad\text{(independent of \(y\))}, \] and, for \(t\in(0,1]\), \[ V_{(i,x,t,y)}\stackrel{d}{=}(1-t)\widetilde V_{i,x}+tW \,\big|\, Y_{i,x}=y. \] Hence the parameter \(t\) interpolates between the conditional sum \(\widetilde V_{i,x}\) and the full sum \(W\), while \(y\) stores the residual information needed for conditioning. Now define an \(\mathbb{R}^d\)-valued measure \(\mu\) on \(\Xi\) by \[ \mu\bigl(\{i\}\times B\times\{0\}\times\{\mathbf{0}\}\bigr) := \mathbb{E}\bigl[U_i\mathbf{1}_{\{U_i\in B\}}\bigr], \qquad B\in\mathcal{B}(\mathbb{R}^d), \] and set \(\mu=0\) outside \(\{t=0,\ y=\mathbf{0}\}\). Then, for every bounded measurable \(f:\mathbb{R}^d\to\mathbb{R}\), \[ \mathbb{E}[f(W)W] = \sum_{i\in I}\mathbb{E}[f(W)U_i] = \sum_{i\in I}\mathbb{E}\bigl[\mathbb{E}[f(W)\mid U_i]\,U_i\bigr] = \int_{\Xi}\mathbb{E}_{\xi}[f(V_\xi)]\,\mu(d\xi), \] where \(\mathbb{E}_{\xi}\) denotes expectation with respect to the law of \(V_\xi\). This is exactly the representation required in Step~1 of Section~\ref{sec:raivc}. 

The key observation is that the family \(\{V_\xi\}_{\xi\in\Xi}\) is closed under further convex combinations with \(W\). Given \(\xi=(i,x,t,y)\in\Xi\), \(s\in[0,1]\), and \(z\in\mathbb{R}^d\), define \[ \psi(\xi,s,z) := \Bigl(i,\ x,\ t+s(1-t),\ \frac{z}{1-t}\mathbf{1}_{\{t<1\}}\Bigr). \] This updates the interpolation level from \(t\) to \(t':=t+s(1-t)\) and rescales the residual accordingly. Indeed, if we write \(Z_\xi:=W-V_\xi\), then \[ Z_\xi\stackrel{d}{=}(1-t)Y_{i,x}. \] Hence, when \(t<1\), conditioning on \(Z_\xi=z\) is equivalent to conditioning on \(Y_{i,x}=z/(1-t)\); when \(t=1\), we simply have \(V_\xi=W\), so the claim is immediate. Therefore, \[ \law\bigl((1-s)V_\xi+sW \,\big|\, Z_\xi=z\bigr) = \law\bigl(V_{\psi(\xi,s,z)}\bigr) \] for all \(s\in[0,1]\) and for \(\law(Z_\xi)\)-a.e.\ \(z\in\mathbb{R}^d\). This verifies the structural requirement in Step~2 of Section~\ref{sec:raivc}, with the corresponding kernels \(\{\nu_\xi\}_{\xi\in\Xi}\) induced by the map \(\psi\). 

Then, the above construction yields the admissible object \(\mathcal{S}\) required by Lemma~\ref{lem:raivc}. Proposition~\ref{prop:newS} then follows by bounding the associated \(\beta\)-functionals for this object. By definition,
\begin{align*}
    \beta_1^{(\xi)}&=|\nu_\xi|(\Xi) \leq \E[\|W-V_\xi\|] = \begin{cases}
     \E[\|W-V_\xi\|]&, \text{ if } t = 0,\\
     (1-t)\|y\|&, \text{otherwise.}
    \end{cases}
\end{align*}
\begin{align*}
\beta_{2} \leq  \int_{\Xi}   \E[\|W-V_\xi\|] |\mu|(\d \xi) &= \sum_{i=0}^{n-1}\E[\|\widetilde U_i^{(i)}\|\E[\|W-V_{i,\widetilde U_i^{(i)},0,0}\|]]\\
&\lesssim \sum_{i=0}^{n-1}(\E[\|\widetilde U_i\|\E[\|W\|]]+\E[\|\widetilde U_i^{(i)}\|\E[\|V_{i,\widetilde U_i^{(i)},0,0}\|]])\\
&\lesssim \sum_{i=0}^{n-1}\sqrt{n\E[\|U_i\|^2]\sum_{j=0}^{n-1}\E[\|U_j\|^2]}<\infty,
\end{align*}
where the last ineqaulity holds because $\sup_{i \geq 0}\E[\|U_i\|^2]<\infty$. When $t=0$, because $s=0$ is a boundary point of measure zero in the 
s-integration,
\begin{align*}
\beta_2^{(\xi)}&\leq  \int_{\Xi}\E[\|W-V_\eta\|]\int_0^1\int_{\R^d} \mathbf{1}\{\psi(\xi,s,z)\in \d \eta\}\ \|z\|\ \law(W-V_\xi)(dz)ds\\
&=  \big(\int_{0}^1(1-s)\d s\big)\E[\|W-V_\xi\|^2] = \frac{\E[\|W-V_\xi\|^2]}{2}.
\end{align*}
When $t \in (0,1]$, $W-V_\xi = (1-t)y$ and we have
\begin{align*}
\beta_2^{(\xi)}&\leq \int_{\Xi}\E[\|W-V_\eta\|]\int_0^1 \mathbf{1}\{\psi(\xi,s,(1-t)y)\in \d \eta\}\ (1-t)\|y\|\ \d s\\
&=  \big(\int_{0}^1(1-s)\d s\big)(1-t)^2\|y\|^2 = \frac{(1-t)^2\|y\|^2}{2}.
\end{align*}
Then, we have
\begin{align*}
\beta_2^{(\xi)} \leq \E[\|W-V_\xi\|^2]/2 = \begin{cases}
     \E[\|W-V_\xi\|^2]/2&, \text{ if } t = 0,\\
     (1-t)^2\|y\|^2/2&, \text{otherwise.}
    \end{cases}
\end{align*}
By definition, we have
\begin{align*}
\beta_{123}^{(\xi)}(a, b, c) & \leq \int_{\Xi} b \beta_1^{(\eta)}+c \sqrt{\beta_2^{(\eta)}}|\nu_{\xi}|(\mathrm{d} \eta) \lesssim (b+\frac{c}{\sqrt{2}})\E[\|W-V_\xi\|^2],
\end{align*}
where the last inequality follows by the similar argument used above for \(\beta_2^{(\xi)}\). Finally, for \(\beta_{234}(a,b,c)\), the definition yield
\begin{align*}
\beta_{234}(a,b,c) &\lesssim \int_{\Xi}  (b+\frac{c}{\sqrt{2}})\E[\|W-V_\xi\|^2] |\mu|(\d \xi)\\
&\lesssim \sum_{i=0}^{n-1}\E[\|\widetilde U_i^{(i)}\|\E[\|W-V_{(i,\widetilde U_i^{(i)},0,0)}\|^2]].
\end{align*}
This inequality holds for any coupling of $(U_0,\ldots,U_{n-1})$ and
$\widetilde U^{(i)}=(\widetilde U^{(i)}_0,\ldots,\widetilde U^{(i)}_{n-1})$.
Moreover, by the definition of the Wasserstein--$2$ distance, we can choose the coupling so that
\begin{equation*}
\beta_{234}(a,b,c)\lesssim \sum_{i=0}^{n-1}\E[\|U_i\|\W_2^2(\law(W), \law(W \mid U_i))], 
\end{equation*}
which implies Proposition \ref{prop:newS} by Lemma \ref{lem:raivc}.
\subsection{Proof for $\delta = 1$}\label{sec:proof-local-depend-1-optimal}
We start from the case that \(\sup_{0\le i\le n-1}\mathbb{E}\|X_i\|^{3}\in \mathcal{O}(1)\) and $\Sigma_n = I_d$.  By \eqref{eq:newS} and the coupling discussed in Section \ref{sec:outline-local-depend-1}, we have
\begin{align*}
\W_1\big(\mathcal{L}(\tfrac{S_n}{\sqrt{n}}),\mathcal{N}(0,\Sigma_n)\big) 
&\lesssim \sum_{i=0}^{n-1}\E_{x \sim \law(U_i)}[\|x\|\E[\|W-\sum_{j=0}^{n-1}\widetilde U_j^{(i,x)}\|^2]]\\
&\lesssim D\sum_{i=0}^{n-1}\sum_{j \in \mathcal N[i]}\E[\|\widetilde U_i\|\E[\|U_j - \widetilde U_j^{i, U_i}\|^2]]\\
&\lesssim D\sum_{i=0}^{n-1}\sum_{j \in \mathcal N[i]}(\E[\|U_i\|\E[\|U_j\|^2]]+\E[\|U_i\|\| U_j\|^2]) \lesssim \frac{D^2}{\sqrt{n}},
\end{align*}
where the last inequality follows from Hölder’s inequality and 
\(\sup_{0\le i\le n-1}\mathbb{E}\|U_i\|^{3}\in \mathcal{O}(n^{-3/2})\). For the case \(\sup_{0\le i\le n-1}\mathbb{E}\|X_i\|^{3}\in \mathcal{O}(1)\) with a general $\Sigma_n = \frac{1}{n}\operatorname{Var}(S_n)$ and $\lambda_{\min}(\Sigma_n) \in \Omega(1)$. Because $\Sigma_n$ is positive definite, it admits the eigen-decomposition $\Sigma_n = UVU^\top$, where $U \in \R^{d \times d}$ has orthonormal columns ($U^TU = I_d$), and $V = \operatorname{diag}(\lambda_1, \dots, \lambda_d) \in \R^{d \times d}$ with $\lambda_1 \geq \cdots \geq \lambda_d >0.$ Then, we obtain
\[
\W_1\big(\mathcal{L}(\tfrac{S_n}{\sqrt{n}}),\mathcal{N}(0,\Sigma_n)\big)=\W_1\big(\mathcal{L}(\tfrac{S_n}{\sqrt{n}}),\mathcal{N}(0,UV^{1/2} Z_d)\big),
\]
where $Z_d \sim \mathcal{N}(0, I_d)$.
Define $W:= V^{-1/2}U^\top\tfrac{S_n}{\sqrt{n}}$. Therefore, $\operatorname{Cov}(W) = I_d$. Since the bound depends on the third moment of $\|U_i\|$,
\begin{align*}
\W_1\big(\mathcal{L}(\tfrac{S_n}{\sqrt{n}}),\mathcal{N}(0,\Sigma_n)\big)&=\W_1\big(\mathcal{L}(UV^{1/2}W),\law(UV^{1/2} Z_d)\big)\\
&\lesssim \lambda_{\max}(\Sigma_n)^{1/2} \W_1\big(\mathcal{L}(W),\mathcal{N}(0,I_d)\big) \in \mathcal{O}(\frac{\lambda_{\max}(\Sigma_n)^{1/2}}{\lambda_{\min}(\Sigma_n)^{3/2}}\cdot D^2 \cdot n^{-1/2}).
\end{align*}
\subsection{Proof for $\delta \in (0,1)$}\label{sec:proof-local-depend-1-other}
For the case $\sup_{0\le i\le n-1}\mathbb{E}\|X_i\|^{2+\delta}<\infty$ with $\delta\in(0,1)$ and $\Sigma_n=I_d$, we proceed to bound the three terms in \eqref{eq:threeterm}.
 For the first term, since $W_i'$ is independent of $U_i$ for all $i\in\{0,\ldots,n-1\}$, the analysis of \cite{gallouet2018regularity} applies verbatim; in particular,
 \begin{align*}
 &\left|\frac{1}{n} \sum_{i=0}^{n-1} \mathbb{E}\Big[\Delta f_h(W_i'+U_i)-nU_i^T \nabla^2 f_h\big(W_i'+\theta U_i\big) U_i\Big]\right|\\
=&\left|\frac{1}{n} \sum_{i=0}^{n-1} \mathbb{E}\Big[\Delta f_h(W_i'+U_i)-\Delta f_h\left(W_i'\right)-nU_i^T\left(\nabla^2 f_h\left(W_i'+\theta U_i\right)-\nabla^2 f_h\left(W_i'\right)\right) U_i\Big]\right|\\
\leq & \frac{1}{n} \sum_{i=0}^{n-1}\E\Big[|\Delta f_h(W_i'+U_i)-\Delta f_h\left(W_i'\right)| + n\|U_i\|^2\|\nabla^2 f_h\left(W_i'+\theta U_i\right)-\nabla^2 f_h\left(W_i\right)\|\Big] \\
\lesssim & \frac{1}{n} \sum_{i=0}^{n-1} \Big(\E[\|U_i\|^\delta] + n\E[\|U_i\|^{2+\delta}]\Big) \in \mathcal O(n^{-\delta/2}).
 \end{align*}
 For the second term, we have
 \begin{align*}
   \left|\frac{1}{n} \sum_{i=0}^{n-1}\mathbb{E}\Big[\Delta f_h(W)-\Delta f_h(W_i'+U_i)\Big]\right| &\leq \frac{1}{n} \sum_{i=0}^{n-1}\E[|\Delta f_h(W)-\Delta f_h(W_i'+U_i)|] \\
   &\lesssim \frac{1}{n} \sum_{i=0}^{n-1} \sum_{j \in \mathcal N[i] \setminus \{i\}}\E[\|U_j\|^\delta] \in \mathcal O( Dn^{-\delta/2}).
 \end{align*}
 For the third term, we have
 \begin{align*}
   \left|\sum_{i=0}^{n-1}\mathbb{E}\Big[U_i^T \nabla^2 f_h\big(W_i'+\theta U_i\big) U_i-U_i^T \nabla^2 f_h\big(W_i+\theta U_i\big) U_i\Big]\right| &\leq \sum_{i=0}^{n-1} \sum_{j \in \mathcal N[i] \setminus \{i\} }\E[\|U_i\|^2\|U_j\|^\delta] \\
   & \in \mathcal O( Dn^{-\delta/2}).
 \end{align*}
Combining the three bounds, we obtain
 \[
\W_1\big(\mathcal{L}(\tfrac{S_n}{\sqrt{n}}),\mathcal{N}(0,\Sigma_n)\big)=\W_1\big(\mathcal{L}(\tfrac{S_n}{\sqrt{n}}),\mathcal{N}(0,I_d)\big)
\in \mathcal{O} (Dn^{-\delta/2}).
\]
By the similar linear algebra techniques as above, when $\sup_{0\le i\le n-1}\mathbb{E}\|X_i\|^{2+\delta}=\mathcal{O}(1)$ for some $\delta\in(0,1)$, and for a general
$\Sigma_n=\tfrac{1}{n}\operatorname{Var}(S_n)$ with $\lambda_{\min}(\Sigma_n)\in\Omega(1)$, we obtain
 \[
\W_1\big(\mathcal{L}(\tfrac{S_n}{\sqrt{n}}),\mathcal{N}(0,\Sigma_n)\big) \in \mathcal{O} (\frac{\lambda_{\max}(\Sigma_n)^{1/2}}{\lambda_{\min}(\Sigma_n)^{ \delta/2}}\cdot D\cdot n^{-\delta/2}),
\]
which completes the proof of Theorem \ref{thm:local-depend-1}.
\section{Proof of Theorem \ref{thm:m-depend-p}}\label{sec:m-depend-p}
In this section we complete the proof of Theorem~\ref{thm:m-depend-p} by providing   the additional details
omitted from the proof outline in Section~\ref{sec:outline-m-depend-p}. We start with the normalized case $\Sigma_n = I_d$.

By the definition of $\W_p$,
\begin{equation*}
\W_p\big(\law(\tfrac{S_n}{\sqrt{n}}),\mathcal{L}(\tfrac{A}{\sqrt{n}})\big)
\lesssim \frac{1}{\sqrt{n}}
(\E[\|\Delta\|^p])^{1/p}.
\end{equation*}
Using Rosenthal ineqaulity for the independent and centered $\{V_j\}_{j=1}^k$, we have 
\begin{align*}
(\E[\|B\|^p])^{1/p}\lesssim  \sqrt{\sum_{j=1}^k\E[\|V_j\|^2]}
+ \big(\sum_{j=1}^k\E[\|V_j\|^p]\big)^{1/p}
&\lesssim M\sqrt{k}+Mk^{1/p}\lesssim M \sqrt{k} \lesssim \frac{M\sqrt{n}}{\sqrt{\ell}}.
\end{align*}
Partition $R$ into at most $M+1$ subsets $R_s:=\{i \in R: i \equiv s~(\operatorname{mod}~ M+1)\} $ so that the partial sums $Y_s:= \sum_{i \in R_s}X_i$ are independent. Rosenthal ineqaulity and $|R| \leq \ell+M$ give
\begin{equation*}
\big(\E[\|\sum_{i \in R}X_i\|^p]\big)^{1/p} \le \sum_{s=0}^{M}\big(\E[\|Y_s\|^p]\big)^{1/p}\lesssim (M+1)\sqrt{\frac{|R|}{M+1}}+(M+1)(\frac{|R|}{M+1})^{1/p}\lesssim \sqrt{(M+1)\ell}.
\end{equation*}
Therefore, we have
\begin{equation}\label{eq:step2}
\W_p\big(\law(\tfrac{S_n}{\sqrt{n}}),\mathcal{L}(\tfrac{A}{\sqrt{n}})\big)
\lesssim \frac{1}{\sqrt{n}}(\frac{(M+1)\sqrt{n}}{\sqrt{l}}+\sqrt{(M+1)\ell})= \frac{M+1}{\sqrt{l}} + \frac{\sqrt{(M+1)\ell}}{\sqrt{n}}.
\end{equation}
Set $\mathcal{I}:=\bigcup_{j=1}^k B_j$ and $\mathcal{I}^c:=\{0,\dots,n-1\}\setminus\mathcal{I}$.
For convenience extend $X_i:=0$ when $i\notin\{0,\dots,n-1\}$.
By $M$-dependence,
\[
\Sigma_n=\frac{1}{n}\sum_{i=0}^{n-1}\sum_{h=-M}^M \mathrm{Cov}(X_i,X_{i+h}),\qquad
\frac{\mathrm{Var}(A)}{n}=\frac{1}{n}\sum_{i\in\mathcal{I}}\ \sum_{\substack{h=-M\\ i+h\in\mathcal{I}}}^M \mathrm{Cov}(X_i,X_{i+h}).
\]
Hence
\begin{align*}
&\frac{\mathrm{Var}(A)}{n}-\Sigma_n\\
=&-\frac{1}{n}\sum_{i\in\mathcal{I}^c}\ \sum_{h=-M}^M\mathrm{Cov}(X_i,X_{i+h})
-\frac{1}{n}\sum_{i\in\mathcal{I}}\ \sum_{\substack{h=-M\\ i+h\notin\mathcal{I}}}^M \mathrm{Cov}(X_i,X_{i+h})\\
=&-\frac{1}{n}\sum_{i\in\bigcup_{j=1}^k G_j}\ \sum_{h=-M}^M\mathrm{Cov}(X_i,X_{i+h})-\frac{1}{n}\sum_{i\in R}\ \sum_{h=-M}^M\mathrm{Cov}(X_i,X_{i+h})
-\frac{1}{n}\sum_{i\in\mathcal{I}}\ \sum_{\substack{h=-M\\ i+h\notin\mathcal{I}}}^M \mathrm{Cov}(X_i,X_{i+h})
\end{align*}
By Cauchy--Schwarz, $\|\mathrm{Cov}(X_i,X_j)\|\le (\E[\|X_i\|^2]\E[\|X_j\|^2])^{1/2}<\infty$. Then,
\begin{equation*}
\Big\|\frac{\mathrm{Var}(A)}{n}-\Sigma_n\Big\|
\lesssim \frac{k(2M+1)^2}{n}+ \frac{\ell(2M+1)}{n}+\frac{k(2M+1)^2}{n}\lesssim \frac{(M+1)^2}{\ell} + \frac{\ell(M+1)}{n}.
\end{equation*}
Then, by Lemma \ref{lem:gauss-gauss},
\begin{equation}\label{eq:step4}
\W_p\Big(\mathcal{N}\big(0,\tfrac{\mathrm{Var}(A)}{n}\big),\mathcal{N}(0,\Sigma_n)\Big)
\lesssim  \Big\|\frac{\mathrm{Var}(A)}{n}-\Sigma_n\Big\|^{1/2}\lesssim \frac{M+1}{\sqrt{\ell}} + \frac{\sqrt{(M+1)\ell}}{\sqrt{n}},
\end{equation}
Let $Y_j:=U_j/\sqrt{\ell}$, so that $(Y_j)_{j=1}^k$ are independent with $\sup_{1\le j\le k}\E[\|Y_j\|]^{p+q}<\infty.$ Moreover,
\[
\frac{A}{\sqrt{n}}=\sqrt{\frac{\ell}{n}}\sum_{j=1}^k Y_j
=\sqrt{\frac{k\ell}{n}}\cdot \frac{1}{\sqrt{k}}\sum_{j=1}^k Y_j,
\qquad
\frac{\mathrm{Var}(A)}{n}=\frac{k\ell}{n}\cdot \frac{1}{k}\sum_{j=1}^k\mathrm{Var}(Y_j)
=\frac{k\ell}{n} \bar\Sigma_k,
\]
where $\bar\Sigma_k:=\frac{1}{k}\sum_{j=1}^k\mathrm{Var}(Y_j)$. If we choose $\ell$ such that 
\begin{align*}
\frac{M+1}{\sqrt{\ell}} + \frac{\sqrt{(M+1)\ell}}{\sqrt{n}} \in \mathcal{O}(1),
\end{align*}
we have $\lambda_{\min}(\bar\Sigma_k) \in \Theta(1)$ and $\lambda_{\max}(\bar\Sigma_k) \in \Theta(1)$. 
By Lemma \ref{lem:independent},
\begin{equation}\label{eq:step3}
\begin{aligned}
\W_p\big(\mathcal{L}(\tfrac{A}{\sqrt{n}}),\mathcal{N}(0,\tfrac{\mathrm{Var}(A)}{n})\big)
&= \sqrt{\frac{k\ell}{n}} \cdot  \W_p\big(\mathcal{L}(k^{-1/2}\sum_{j=1}^k Y_j),\mathcal{N}(0,\bar\Sigma_k)\big)\\
&\lesssim \sqrt{\frac{k\ell}{n}} \cdot k^{-\frac{p+q-2}{2p}} \lesssim (\frac{n}{\ell})^{-\frac{p+q-2}{2p}} .
\end{aligned}
\end{equation}
 Then, by the triangle inequality together with \eqref{eq:step2}, \eqref{eq:step4}, and \eqref{eq:step3},
\begin{equation}\label{eq:master}
\W_p\big(\mathcal{L}(\tfrac{S_n}{\sqrt{n}}),\mathcal{N}(0,\Sigma_n)\big)
\lesssim (\frac{n}{\ell})^{-\frac{p+q-2}{2p}} + \frac{M+1}{\sqrt{l}} + \frac{\sqrt{(M+1)\ell}}{\sqrt{n}}
\end{equation}
Choose $\ell:=\lfloor (M+1)^{\frac{2 p}{2 p+q-2}} n^{\frac{p+q-2}{2 p+q-2}}\rfloor$, so that the optimized rate
\[
\W_p\big(\mathcal{L}(\tfrac{S_n}{\sqrt{n}}),\mathcal{N}(0,\Sigma_n)\big)
\in \mathcal O( (M+1)^{1 + \frac{2-q}{2(2p+q-2)}} n^{-\frac{p+q-2}{2(2 p+q-2)}}).
\]
Note that, in the bounds for \eqref{eq:step2} and \eqref{eq:step4}, 
$\|X_i\|$ appears \emph{linearly}, whereas in the bound for
\eqref{eq:step3} it enters with exponent $1+q/p$ (see \cite[Theorem~6]{bonis2020stein}).
Therefore, by the same argument as in the proof of Theorem~\ref{thm:local-depend-1}, for a general
$\Sigma_n$ we obtain
\[
\W_p\Big(\mathcal{L}\big(\tfrac{S_n}{\sqrt{n}}\big),\mathcal{N}(0,\Sigma_n)\Big)
\in
\mathcal{O}\left(
\frac{\lambda_{\max}(\Sigma_n)^{1/2}}{\lambda_{\min}(\Sigma_n)^{1/2}}
(M+1)^{1+\frac{2-q}{2(2p+q-2)}}
n^{-\frac{p+q-2}{2(2p+q-2)}}
\right),
\]
which completes the proof of Theorem \ref{thm:m-depend-p}.
\section{Proof of Corollary \ref{co:U}}\label{sec:proof-U}
Let $I = \{(i_1, \dots, i_r): 0 \leq i_1<\cdots<i_r\leq n-1\}$ and $X_\alpha = h(Z_{i_1}, \dots, Z_{i_r})$ for every $\alpha = (i_1, \dots, i_r) \in I$. We write 
\begin{align*}
W = \sum_{\alpha \in I_n}X_\alpha.
\end{align*}
Note that
\begin{align*}
\operatorname{Var}\big(\tbinom{n}{r}^{-1/2} W\big)
= \tbinom{n}{r}^{-1}\operatorname{Var}(W)
&\succeq \tbinom{n}{r}^{-1}\sum_{\substack{\alpha,\beta\in I\\ |\alpha\cap\beta|=1}} \operatorname{Cov}(X_\alpha,X_\beta)\\
&= \tbinom{n}{r}^{-1}\sum_{\substack{\alpha,\beta\in I\\ |\alpha\cap\beta|=1}}
\operatorname{Var}\big(\mathbb{E}[h(Z_0,\ldots,Z_{r-1}) \mid Z_0]\big),
\end{align*}
which implies
\begin{align*}
\lambda_{\min}\left(\operatorname{Var}\big(\tbinom{n}{r}^{-1/2} W\big)\right)
\gtrsim
\tbinom{n}{r}^{-1}\cdot n \cdot \tbinom{n-1}{r-1}\cdot \tbinom{n-r}{r-1}
\gtrsim n^{r-1},
\end{align*}
since $\tbinom{n}{r}\in \Theta( n^{r})$. Furthermore, we have
\begin{align*}
\lambda_{\max}\left(\operatorname{Var}\big(\tbinom{n}{r}^{-1/2} W\big)\right)&\lesssim \tbinom{n}{r}^{-1}\cdot \sum_{j=1}^r \binom{n}{j}\binom{n-j}{r-j}\binom{n-r}{r-j}\\
&\lesssim\tbinom{n}{r}^{-1}\cdot n \cdot \tbinom{n-1}{r-1}\cdot \tbinom{n-r}{r-1}+ \tbinom{n}{r}^{-1}\cdot \sum_{j=2}^r \binom{n}{j}\binom{n-j}{r-j}\binom{n-r}{r-j}\\
&\lesssim n^{r-1} + \sum_{j=2}^r n^{r-j} \lesssim n^{r-1}.
\end{align*}
Moreover, the maximal degree of the dependency graph of $\{X_\alpha\}_{\alpha\in I}$ satisfies 
\begin{align*}
D = \tbinom{n}{r}-\tbinom{n-r}{r}=\sum_{j=1}^r\tbinom{r}{j}\tbinom{n-r}{r-j} &= \tbinom{r}{1}\tbinom{n-r}{r-1} + \sum_{j=2}^r\tbinom{r}{j}\tbinom{n-r}{r-j}\\
&\in \Theta(n^{r-1}) + \sum_{j=2}^r\Theta(n^{r-j}) \in \Theta(n^{r-1}).
\end{align*}equals
$\tbinom{n}{r}-\tbinom{n-r}{r}$, which 
$\tbinom{n}{r}-\tbinom{n-r}{r}\asymp n^{r-1}$. Then, applying Theorem \ref{thm:local-depend-1} yields
\begin{align*}
\W_1\Big(\law\big(\tbinom{n}{r}^{-1/2} W\big), \mathcal N\big(0, \operatorname{Var}(\tbinom{n}{r}^{-1/2} W)\big)\Big) \lesssim \frac{1}{n^{r-1}} \cdot (n^{r-1})^2 \cdot (n^r)^{-1/2} \lesssim n^{r/2-1}.
\end{align*}
Multiplying $\sqrt{n}\cdot\tbinom{n}{r}^{-1/2}$ to both sides of the above inequality yields
\begin{align*}
\W_1\Big(\law\big(\sqrt{n}U_n\big), \mathcal N\big(0, n\cdot\tbinom{n}{r}^{-2}\operatorname{Var}(W)\big)\Big) \lesssim  n^{-1/2}.
\end{align*}
Notice that
\begin{align*}
n\cdot\tbinom{n}{r}^{-2}\operatorname{Var}(W) &= n\cdot\tbinom{n}{r}^{-2} \sum_{i=1}^r\sum_{\substack{\alpha,\beta\in I\\ |\alpha\cap\beta|=i}}\operatorname{Cov}(X_\alpha, X_\beta)\\
&= n\cdot\tbinom{n}{r}^{-2} \sum_{\substack{\alpha,\beta\in I\\ |\alpha\cap\beta|=1}}\operatorname{Cov}(X_\alpha, X_\beta) + \mathcal{O}(n^{-1})\\
&= n^2\cdot\tbinom{n}{r}^{-2}\tbinom{n-1}{r-1}\tbinom{n-r}{r-1}\operatorname{Var}\big(\mathbb{E}[h(Z_0,\ldots,Z_{r-1}) \mid Z_0]\big) + \mathcal{O}(n^{-1})\\
&= r^2\operatorname{Var}\big(\mathbb{E}[h(Z_0,\ldots,Z_{r-1}) \mid Z_0]\big) + \mathcal{O}(n^{-1}),
\end{align*}
where the last equality follows from the next lemma, whose proof is deferred to Appendix~\ref{sec:com}. 
\begin{lemma}\label{lem:com}
Let $Q(n,r)=n^2\cdot \tbinom{n}{r}^{-2}\tbinom{n-1}{r-1}\tbinom{n-r}{r-1}.$
For $n\ge 2r-1$ (with $r$ fixed), we have
\[
Q(n,r)=r^{2}+\mathcal{O} (1/n). 
\]
\end{lemma}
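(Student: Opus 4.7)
The plan is to reduce $Q(n,r)$ to a product of $r-1$ ratios that are each $1+\mathcal{O}(1/n)$, and then expand. Using the standard identity
\[
\binom{n-1}{r-1}=\frac{r}{n}\binom{n}{r},
\]
one factor of $\binom{n}{r}^{-1}$ is consumed, and $Q(n,r)$ collapses to
\[
Q(n,r)=rn\cdot\frac{\binom{n-r}{r-1}}{\binom{n}{r}}.
\]

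Next I would write out the remaining ratio in terms of falling products. Since
\[
\binom{n}{r}=\frac{n(n-1)\cdots(n-r+1)}{r!},\qquad
\binom{n-r}{r-1}=\frac{(n-r)(n-r-1)\cdots(n-2r+2)}{(r-1)!},
\]
we get, after cancelling the $n$ in the denominator against the leading $n$ of the falling product,
\[
Q(n,r)=r^{2}\prod_{k=0}^{r-2}\frac{n-r-k}{n-1-k}.
\]
This step requires only that the $r-1$ numerator factors and the $r-1$ remaining denominator factors be properly aligned; the constraint $n\ge 2r-1$ ensures that every factor $n-r-k$ with $k\in\{0,\dots,r-2\}$ is nonnegative, so there is no degenerate cancellation.

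Finally, I would write each ratio as
\[
\frac{n-r-k}{n-1-k}=1-\frac{r-1}{n-1-k},
\]
and note that for $n\ge 2r-1$ and $0\le k\le r-2$ we have $(r-1)/(n-1-k)\le (r-1)/(n-r+1)\in\mathcal{O}(1/n)$. Since $r$ is fixed, the product of $r-1$ such factors equals $1+\mathcal{O}(1/n)$ by a one-line binomial expansion (or by $\prod(1-\varepsilon_k)=1-\sum\varepsilon_k+\mathcal{O}((\sum|\varepsilon_k|)^{2})$). Multiplying by $r^{2}$ yields $Q(n,r)=r^{2}+\mathcal{O}(1/n)$, completing the proof. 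The only thing to watch is the bookkeeping of the number of factors in each falling product, since an off-by-one error would change the leading constant; nothing else in the argument is delicate.
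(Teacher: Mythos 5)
Your proposal is correct and follows essentially the same route as the paper: both reduce $Q(n,r)$ to the identical product $r^{2}\prod_{k=0}^{r-2}\bigl(1-\tfrac{r-1}{n-1-k}\bigr)$ (the paper via direct factorial manipulation, you via $\binom{n-1}{r-1}=\tfrac{r}{n}\binom{n}{r}$ and falling products) and then observe that each factor is $1+\mathcal{O}(1/n)$ with $r$ fixed. The only cosmetic difference is the final step, where the paper expands $\log(Q/r^{2})$ while you expand the product directly; both are valid and give the same $r^{2}+\mathcal{O}(1/n)$ conclusion.
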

Therefore, by the triangle inequality,
\begin{align*}
&\W_1\Big(\law\big(\tfrac{\sqrt{n}U_n}{r}\big), \mathcal N\big(0, \operatorname{Var}\big(\mathbb{E}[h(Z_0,\ldots,Z_{r-1}) \mid Z_0]\big)\big)\Big) \\
\lesssim& \W_1\Big(\law\big(\tfrac{\sqrt{n}U_n}{r}\big), \mathcal N\big(0, \tfrac{n}{r^2}\cdot\tbinom{n}{r}^{-2}\operatorname{Var}(W)\big)\Big)\\
&~~~~~+\W_1\Big(\mathcal N\big(0, \tfrac{n}{r^2}\cdot\tbinom{n}{r}^{-2}\operatorname{Var}(W)\big), \mathcal N\big(0, \operatorname{Var}\big(\mathbb{E}[h(Z_0,\ldots,Z_{r-1}) \mid Z_0]\big)\Big)\lesssim n^{-1/2},
\end{align*}
thereby completing the proof of Corollary \ref{co:U}.
\subsection{Proof of Lemma \ref{lem:com}}\label{sec:com}
By definition, we have
\begin{align*}
Q(n, r)=r^2 \frac{(n-r)!^2}{(n-1)!(n-2 r+1)!}=r^2 \prod_{k=1}^{r-1} \frac{n-(r-1+k)}{n-k}=r^2 \prod_{k=1}^{r-1}\left(1-\frac{r-1}{n-k}\right) .
\end{align*}
Take logs and expand
\begin{align*}
\log \frac{Q(n, r)}{r^2}=\sum_{k=1}^{r-1} \log \left(1-\frac{r-1}{n-k}\right)=-(r-1) \sum_{k=1}^{r-1} \frac{1}{n-k}+O(n^{-2}) \in \mathcal{O}(n^{-1}),
\end{align*}
which implies
\begin{align*}
Q(n, r) = r^2 + \mathcal{O}(n^{-1}),
\end{align*}
thereby completing the proof of Lemma \ref{lem:com}.
\section{Proof of Lemma \ref{lem:Sigma-exists1}}\label{sec:proof-Sigma-exists}
In this section, we prove Lemma~\ref{lem:Sigma-exists1} by establishing the following
stronger and more detailed statement. We define $\bar \Sigma_n:= \frac{1}{n}\operatorname{Var}_{\pi}(S_n)$, the normalized covariance matrix  when the chain is initialized in
stationarity, i.e., $x_0\sim\pi$. 
\begin{lemma}
\label{lem:Sigma-exists}
Under Assumption~\ref{assumption:markovchain}, the matrices $\Sigma_n$ and $\bar \Sigma_n$ are well defined for every $n\ge 1$, $\|\Sigma_n-\bar \Sigma_n\| \in \mathcal O(1/n)$, $\lambda_{\min}(\bar \Sigma_n) \in \Omega(1)$, and $\max\{\lambda_{\max}(\Sigma_n),\lambda_{\max}(\bar \Sigma_n)\} \in \mathcal O(1)$. Moreover, in the homogeneous case where
$h_i\equiv h$ for all $i\ge 0$, the limit
\[
\Sigma_\infty:= \lim_{n\to\infty}\frac{1}{n}\operatorname{Var}(S_n)
\]
exists. In addition, $\|\Sigma_n-\Sigma_\infty\|=\mathcal O(1/n)$.
\end{lemma}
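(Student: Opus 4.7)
The plan is to reduce everything to the $V$-uniform geometric ergodicity guaranteed by Assumption~\ref{assumption:markovchain}. By \cite[Theorem~15.0.1 / 16.0.1]{Meyn12_book}, there exists $\rho\in(0,1)$ such that $|P^{k}f(x)-\pi(f)|\lesssim V(x)\rho^{k}$ for every measurable $f$ with $|f|\le V$. Applying Jensen to the drift $PV\le \lambda V+L\mathds{1}_{C}$ gives $PV^{1/2}\le \lambda^{1/2}V^{1/2}+L^{1/2}\mathds{1}_{C}$, so an analogous geometric bound holds with $V^{1/2}$ in place of $V$ (after enlarging $\rho$ if necessary); this is the relevant regularity class for the $h_i$'s, since $\|h_i\|^{2}\le V$ implies that every scalar component $h_i^{(\ell)}$ satisfies $|h_i^{(\ell)}|\le V^{1/2}$ and $\pi(h_i^{(\ell)})=0$. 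Iterating the drift also yields $\sup_{i\ge 0}\E_\mu[V(x_i)]<\infty$, which together with $\|h_i\|^{2}\le V$ guarantees that $\Sigma_n$ and $\bar\Sigma_n$ are well defined.

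The heart of the proof is the covariance-mismatch bound $\|\Sigma_n-\bar\Sigma_n\|\in\mathcal O(1/n)$. For $i\le j$ I would expand
\[
\operatorname{Cov}_\mu(h_i(x_i),h_j(x_j))=\E_\mu[g_{ij}(x_i)]-\E_\mu[h_i(x_i)]\,\E_\mu[h_j(x_j)]^{\top},\qquad g_{ij}(x):=h_i(x)(P^{j-i}h_j)(x)^{\top},
\]
noting that under $\pi$ the second factor vanishes and $\operatorname{Cov}_\pi(h_i(x_i),h_j(x_j))=\pi(g_{ij})$. The $V^{1/2}$-uniform ergodicity bounds $|P^{j-i}h_j^{(\ell)}(x)|\lesssim V^{1/2}(x)\rho^{j-i}$, so each entry of $g_{ij}$ is dominated by $V(x)\rho^{j-i}$; applying $V$-uniform ergodicity at time $i$ to the rescaled function $g_{ij}/\rho^{j-i}$ and integrating against $\mu$ yields $\|\E_\mu[g_{ij}(x_i)]-\pi(g_{ij})\|\lesssim\mu(V)\rho^{j}$, while a separate $V^{1/2}$-estimate gives $\|\E_\mu[h_i(x_i)]\,\E_\mu[h_j(x_j)]^{\top}\|\lesssim\mu(V)\rho^{i+j}$. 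Summing both contributions over $0\le i\le j\le n-1$ (and handling $i>j$ symmetrically) yields convergent geometric double series, so $\|n(\Sigma_n-\bar\Sigma_n)\|\in\mathcal O(1)$.

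The remaining claims follow quickly from the same estimates. The entrywise bound $|\pi(g_{ij}^{(k,\ell)})|\lesssim\rho^{|j-i|}\pi(V)$ together with row summation gives $\lambda_{\max}(\bar\Sigma_n)\le\|\bar\Sigma_n\|\lesssim\sum_{k\in\mathbb{Z}}\rho^{|k|}\in\mathcal O(1)$, and then $\lambda_{\max}(\Sigma_n)\in\mathcal O(1)$ by the triangle inequality. Weyl's inequality combined with the nondegeneracy hypothesis $\lambda_{\min}(\Sigma_n)\in\Omega(1)$ and the $\mathcal O(1/n)$ mismatch then transfers the lower bound to $\bar\Sigma_n$ for $n$ sufficiently large. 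In the homogeneous case, stationarity gives $\bar\Sigma_n=\sum_{|k|<n}(1-|k|/n)R(k)$ with $R(k):=\E_\pi[h(x_0)h(x_k)^{\top}]$; since $\|R(k)\|\lesssim\rho^{|k|}$, the series $\Sigma_\infty:=\sum_{k\in\mathbb{Z}}R(k)$ converges, and the standard truncation estimate $\|\bar\Sigma_n-\Sigma_\infty\|\le\sum_{|k|\ge n}\|R(k)\|+\tfrac{1}{n}\sum_{|k|<n}|k|\|R(k)\|\in\mathcal O(1/n)$ combined with the covariance mismatch delivers $\|\Sigma_n-\Sigma_\infty\|\in\mathcal O(1/n)$.

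The main technical obstacle is the covariance-mismatch step: one must invoke $V^{\alpha}$-uniform ergodicity at two scales simultaneously (with $\alpha=1/2$ to control $P^{j-i}h_j$, and with $\alpha=1$ to swap $\mu$ for $\pi$ at time $i$), while carefully tracking matrix-valued functions and their entrywise $V$-norms. Once this two-scale ergodicity bookkeeping is set up correctly, everything else reduces to routine geometric-series sums and a single application of Weyl's inequality.
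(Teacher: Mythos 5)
Your proposal is correct, but it takes a genuinely different route from the paper's own argument. The paper works via a Poisson-equation martingale decomposition: it defines $g_t:=\sum_{k\ge0}P^k h_{t+k}$, writes $S_n = g_0(x_0)-Pg_n(x_{n-1}) + M_n$ with $M_n$ a martingale built from the increments $m_t := g_t(x_t)-Pg_t(x_{t-1})$, and then exploits the orthogonality $\operatorname{Var}_\mu(M_n)=\sum_t\E_\mu[m_t m_t^\top]$ to reduce the covariance estimates to a single sum of one-step conditional covariance matrices $H_t$; the mismatch $\|\Sigma_n-\bar\Sigma_n\|$ is controlled by comparing $\E_\mu[H_t(x_{t-1})]$ with $\pi(H_t)$ at geometric rate, and the $O(1)$ boundary/cross terms are handled separately. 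You instead expand the double sum of covariances $\operatorname{Cov}_\mu(h_i(x_i),h_j(x_j))=\E_\mu[g_{ij}(x_i)]-\E_\mu[h_i]\E_\mu[h_j]^\top$ with $g_{ij}(x)=h_i(x)(P^{j-i}h_j)(x)^\top$, and use two-scale ($V^{1/2}$ and $V$) uniform ergodicity to bound the $(i,j)$ term by $\rho^{j}$ plus $\rho^{i+j}$, after which the result follows from convergent geometric double series. Both approaches rely on the same underlying facts ($\pi(V)<\infty$, $V^{1/2}$-uniform geometric ergodicity, and the standing nondegeneracy $\lambda_{\min}(\Sigma_n)\in\Omega(1)$ plus Weyl), and both arrive at the same truncation estimate in the homogeneous case. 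The martingale route has the advantage of collapsing the double sum into a single sum and yields $\Sigma_\infty$ cleanly as $\pi(H)$, which is the standard form of the Markov-chain CLT asymptotic variance; your direct covariance expansion is more elementary and avoids the Poisson-equation machinery at the cost of carrying two ergodicity scales and a double summation. Your argument is complete and correct as stated.
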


By the drift condition,
\[
\E_\mu[V(x_t)] \le \lambda^t \E_\mu[V(x_0)] + \frac{L}{1-\lambda},
\qquad t\ge 0,
\]
so \(\sup_{t\ge 0}\E_\mu[V(x_t)]<\infty\). Since \(\|h_t\|^2\le V\), it follows that
\(\E_\mu\|S_n\|^2<\infty\) for every \(n\), and similarly under stationarity because
\(\pi(V)<\infty\). Hence \(\Sigma_n\) and \(\bar\Sigma_n\) are well defined.

Next, since \(u\mapsto u^{1/2}\) is concave,
\[
P(V^{1/2}) \le (PV)^{1/2}
\le (\lambda V + L\mathbf 1_C)^{1/2}
\le \lambda^{1/2}V^{1/2} + L^{1/2}\mathbf 1_C .
\]
Thus \(V^{1/2}\) is also a Lyapunov function. By Theorem~1.1 of
\cite{baxendale2005renewal}, there exist constants \(c>0\) and \(\rho\in(0,1)\) such that
\begin{equation}\label{eq:htoV-short}
\|P^k h_t\| \le c\rho^k V^{1/2},
\qquad t,k\ge 0,
\end{equation}
where we used \(\pi(h_t)=0\) and \(\|h_t\|\le V^{1/2}\). Define
\[
g_t(x):=\sum_{k=0}^\infty P^k h_{t+k}(x).
\]
By \eqref{eq:htoV-short}, the series converges absolutely in the
\(V^{1/2}\)-weighted norm, \(\|g_t\|\lesssim V^{1/2}\) uniformly in \(t\), and
\[
g_t-Pg_{t+1}=h_t.
\]
Let \(\mathcal F_t:=\sigma(x_0,\dots,x_t)\), and define
\[
m_t:=g_t(x_t)-Pg_t(x_{t-1}),\qquad
M_n:=\sum_{t=1}^{n-1}m_t,\qquad
R_n:=g_0(x_0)-Pg_n(x_{n-1}).
\]
Then \((m_t,\mathcal F_t)\) is a square-integrable martingale difference sequence, and
\begin{equation}\label{eq:Sn-decomp-short}
S_n=M_n+R_n .
\end{equation}
Since \(\|g_t\|\lesssim V^{1/2}\), the drift bound yields
\begin{equation}\label{eq:Rn-short}
\sup_{n\ge 1}\E_\mu\|R_n\|^2+\sup_{n\ge 1}\E_\pi\|R_n\|^2<\infty.
\end{equation}
Now define
\[
H_t(x):=\E\!\left[(g_t(Y)-Pg_t(x))(g_t(Y)-Pg_t(x))^\top\mid Y\sim P(x,\cdot)\right].
\]
Then \(H_t(x)\succeq 0\), and by Jensen's inequality and \(\|g_t\|\lesssim V^{1/2}\),
\[
\|H_t(x)\|
\le 2P(\|g_t\|^2)(x)+2\|Pg_t(x)\|^2
\le 4P(\|g_t\|^2)(x)
\lesssim PV(x)\lesssim V(x),
\]
uniformly in \(t\) and \(x\). Since the martingale differences are orthogonal in \(L^2\),
\[
\operatorname{Var}_\mu(M_n)=\sum_{t=1}^{n-1}\E_\mu[H_t(x_{t-1})],
\qquad
\operatorname{Var}_\pi(M_n)=\sum_{t=1}^{n-1}\pi(H_t),
\]
and therefore
\begin{equation}\label{eq:Mn-short}
\|\operatorname{Var}_\mu(M_n)\|+\|\operatorname{Var}_\pi(M_n)\|\lesssim n.
\end{equation}
We next show that the boundary term contributes only \(O(1)\) to the covariance.
The term \(g_0(x_0)\) is orthogonal to \(M_n\). For the other term, the Markov property gives
\[
\E[Pg_n(x_{n-1})\mid \mathcal F_t]=P^{\,n-t}g_n(x_t).
\]
Since \(\pi(g_n)=0\) and \(\|g_n\|\lesssim V^{1/2}\), applying
\eqref{eq:htoV-short} to \(g_n\) gives
\[
\|P^{\,n-t}g_n(x)\|\lesssim \rho^{\,n-t}V^{1/2}(x).
\]
Hence, by Cauchy--Schwarz and the uniform \(L^2\) bound on \(m_t\),
\[
\|\E_\mu[m_tPg_n(x_{n-1})^\top]\|\lesssim \rho^{\,n-t},
\]
and summing over \(t\) yields
\[
\|\operatorname{Cov}_\mu(M_n,R_n)\|+\|\operatorname{Cov}_\pi(M_n,R_n)\|=\mathcal O(1).
\]
Combining this with \eqref{eq:Rn-short} and \eqref{eq:Mn-short}, we obtain
\begin{equation}\label{eq:VarSn-short}
\operatorname{Var}_\mu(S_n)=\operatorname{Var}_\mu(M_n)+\mathcal O(1),
\qquad
\operatorname{Var}_\pi(S_n)=\operatorname{Var}_\pi(M_n)+\mathcal O(1).
\end{equation}
Therefore,
\[
\lambda_{\max}(\Sigma_n)=\mathcal O(1),
\qquad
\lambda_{\max}(\bar\Sigma_n)=\mathcal O(1).
\]
To compare \(\Sigma_n\) and \(\bar\Sigma_n\), note from \eqref{eq:VarSn-short} that
\[
\operatorname{Var}_\mu(S_n)-\operatorname{Var}_\pi(S_n)
=
\sum_{t=1}^{n-1}\bigl(\mu P^{t-1}H_t-\pi(H_t)\bigr)+\mathcal O(1).
\]
Since \(\sup_{t\ge 1}\|H_t\|_V<\infty\), \(V\)-geometric ergodicity implies
\[
\|\mu P^{t-1}H_t-\pi(H_t)\|\lesssim \rho^{t-1}.
\]
Summing over \(t\) gives
\[
\|\operatorname{Var}_\mu(S_n)-\operatorname{Var}_\pi(S_n)\|=\mathcal O(1),
\]
and hence
\[
\|\Sigma_n-\bar\Sigma_n\|
=
\frac1n\|\operatorname{Var}_\mu(S_n)-\operatorname{Var}_\pi(S_n)\|
=
\mathcal O(1/n).
\]
Under the standing nondegeneracy assumption \(\lambda_{\min}(\Sigma_n)\gtrsim 1\),
Weyl's inequality yields
\[
\lambda_{\min}(\bar\Sigma_n)
\ge \lambda_{\min}(\Sigma_n)-\|\Sigma_n-\bar\Sigma_n\|
\gtrsim 1.
\]
Finally, in the homogeneous case \(h_t\equiv h\), we have \(g_t\equiv g\) and
\(H_t\equiv H\). Define
\[
\Sigma_\infty:=\pi(H)=\E_\pi[m_1m_1^\top]\succeq 0.
\]
Then
\[
\frac1n\operatorname{Var}_\mu(M_n)-\Sigma_\infty
=
\frac1n\sum_{t=1}^{n-1}\bigl(\mu P^{t-1}H-\pi(H)\bigr),
\]
whose norm is \(\mathcal O(1/n)\) by geometric ergodicity. Using
\eqref{eq:VarSn-short} once more,
\[
\|\Sigma_n-\Sigma_\infty\|
\le
\Bigl\|\frac1n\operatorname{Var}_\mu(M_n)-\Sigma_\infty\Bigr\|+\mathcal O(1/n)
=
\mathcal O(1/n).
\]
In particular, \(\Sigma_\infty=\lim_{n\to\infty}n^{-1}\operatorname{Var}_\mu(S_n)\) exists.

\section{Proof of Theorem \ref{thm:MC-W1}}\label{sec:proof-MC-W1}
In what follows, we prove Theorem~\ref{thm:MC-W1} by filling in the details of the proof outline
presented in Section~\ref{sec:outline-MC-W1}.
\subsection{Preliminaries}
In this subsection, we collect several useful lemmas that will be used in the proof of Theorem~\ref{thm:MC-W1}.
\begin{lemma}\label{lem:pi(V)}
Under Assumption \ref{assumption:markovchain}, we have $\pi(V)=\int V\d\pi\le \frac{L}{1-\lambda}<\infty.$
\end{lemma}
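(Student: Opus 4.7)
The natural approach is to integrate the drift condition $PV\le\lambda V+L\mathds 1_C$ against the invariant measure $\pi$: by invariance $\pi(PV)=\pi(V)$, and since $\pi(C)\le 1$, we would obtain $(1-\lambda)\pi(V)\le L$, which is exactly the claimed bound. The only subtlety is that this manipulation is circular unless $\pi(V)<\infty$ is known \emph{a priori} (otherwise we merely get $+\infty\le+\infty$). I would resolve this via truncation together with the ergodic theorem for bounded test functions.

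First, I would iterate the drift. Using $PV\le \lambda V+L$ (since $\mathds 1_C\le 1$) and the fact that $P$ is linear, positive, and preserves constants, a straightforward induction on $n$ yields
\[
P^n V(x)\;\le\;\lambda^n V(x)+\frac{L(1-\lambda^n)}{1-\lambda},\qquad \forall\,x\in\mathcal X,\ n\ge 0.
\]
Summing this over $k=0,\dots,n-1$ and dividing by $n$ gives the uniform Ces\`aro bound
\[
\frac{1}{n}\sum_{k=0}^{n-1} P^k V(x)\;\le\;\frac{V(x)}{n(1-\lambda)}+\frac{L}{1-\lambda},\qquad \forall\,x\in\mathcal X.
\]

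Next, for each $M\ge 1$, set $V_M:=V\wedge M$. Since $V_M$ is bounded, it is trivially $\pi$-integrable. Under Assumption~\ref{assumption:markovchain}, $P$ is $\psi$-irreducible, aperiodic, and admits a Foster--Lyapunov function, hence is positive Harris recurrent; standard ergodic theory then gives $\frac{1}{n}\sum_{k=0}^{n-1} P^k V_M(x_0)\to \pi(V_M)$ for every initial point $x_0\in\mathcal X$. Because $V_M\le V$ pointwise, the Ces\`aro average on the left is dominated by the bound in the preceding display, so passing to the limit $n\to\infty$ yields $\pi(V_M)\le L/(1-\lambda)$ for every $M$. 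Monotone convergence $V_M\uparrow V$ then delivers $\pi(V)\le L/(1-\lambda)<\infty$, as desired.

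The main obstacle is exactly the circularity flagged above: one cannot cancel $\lambda\pi(V)$ from both sides of the naive invariance identity without first knowing $\pi(V)<\infty$. The truncation step is what breaks the loop, and the iterated drift bound is what supplies the uniform Ces\`aro control needed to pass from bounded truncations back to $V$ via monotone convergence.
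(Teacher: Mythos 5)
Your proof is correct, but it takes a genuinely different route from the paper. The paper's own proof is exactly the ``naive'' one-line argument you flagged: it integrates the drift condition against $\pi$, uses invariance to write $\pi(V)=\pi(PV)\le\lambda\pi(V)+L$, and then rearranges to get $\pi(V)\le L/(1-\lambda)$ --- a manipulation that is only legitimate once one knows $\pi(V)<\infty$ (or supplements it with a citation such as \cite[Theorem~15.0.1]{Meyn12_book}, whose conclusions include finiteness of $\pi(V)$ under Assumption~\ref{assumption:markovchain}). Your argument closes precisely this gap: the iterated drift bound $P^nV\le\lambda^nV+L(1-\lambda^n)/(1-\lambda)$ gives a Ces\`aro bound uniform in $n$, the truncation $V_M=V\wedge M$ lets you apply the ergodic/TV convergence valid for bounded functions of a positive Harris recurrent aperiodic chain (positive Harris recurrence follows from the $\psi$-irreducibility, the petite set, and the drift condition, independently of any integrability of $V$), and monotone convergence then recovers the bound for $V$ itself. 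What the paper's approach buys is brevity; what yours buys is a self-contained, non-circular derivation of $\pi(V)<\infty$ that does not lean on the heavier $V$-uniform ergodicity theorem. The only point worth stating explicitly in your write-up is that $V$ is finite-valued (its range is $[1,\infty)$ by Assumption~\ref{assumption:markovchain}), so the term $V(x_0)/(n(1-\lambda))$ in your Ces\`aro bound indeed vanishes as $n\to\infty$ for the fixed initial point you use.
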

\begin{proof}{Proof of Lemma \ref{lem:pi(V)}}

    Since $\pi$ is invariant,
\[
\pi(V)=\int V\d\pi = \int PVd\pi \le \lambda \int Vd\pi + L\int \mathbf{1}_Cd\pi \le \lambda\pi(V)+L .
\]
Therefore,
\[
\pi(V)\le \frac{L}{1-\lambda}<\infty.
\]
\end{proof}
\begin{lemma}
\label{lem:gauss-gauss}
Let $A,B$ be symmetric positive semidefinite $d\times d$ matrices.
Then for any $p \geq 1$,
\[
W_{p}\bigl(\mathcal N(0,A),\mathcal N(0,B)\bigr) \lesssim \|A-B\|^{1/2}.
\]
\end{lemma}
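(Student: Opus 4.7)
The plan is to build an explicit Gaussian coupling using a single standard normal vector, and then reduce the bound to a well-known operator-norm inequality between matrix square roots.

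First, I would let $Z\sim\mathcal N(0,I_d)$ and set $X:=A^{1/2}Z$ and $Y:=B^{1/2}Z$, so $X\sim\mathcal N(0,A)$ and $Y\sim\mathcal N(0,B)$. By the definition of $\W_p$,
\[
\W_p^{p}\bigl(\mathcal N(0,A),\mathcal N(0,B)\bigr)
\;\le\;\mathbb E\|X-Y\|^{p}
\;=\;\mathbb E\bigl\|(A^{1/2}-B^{1/2})Z\bigr\|^{p}
\;\le\;\bigl\|A^{1/2}-B^{1/2}\bigr\|^{p}\,\mathbb E\|Z\|^{p}.
\]
Since $\mathbb E\|Z\|^{p}$ is a $p,d$-dependent constant which is absorbed into the $\lesssim$, the claim will follow once I show
\[
\bigl\|A^{1/2}-B^{1/2}\bigr\|\;\le\;\|A-B\|^{1/2}.
\]

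For the matrix-analytic step, I would argue as follows. From $A\preceq B+\|A-B\|\,I_d$, operator monotonicity of $t\mapsto t^{1/2}$ on $[0,\infty)$ gives $A^{1/2}\preceq (B+\|A-B\|\,I_d)^{1/2}$, and the subadditivity $(U+V)^{1/2}\preceq U^{1/2}+V^{1/2}$ for commuting PSD matrices $U=B,V=\|A-B\|I_d$ yields $A^{1/2}\preceq B^{1/2}+\|A-B\|^{1/2}I_d$. Exchanging the roles of $A$ and $B$ gives the matching lower bound, so $-\|A-B\|^{1/2}I_d\preceq A^{1/2}-B^{1/2}\preceq \|A-B\|^{1/2}I_d$, which is equivalent to $\|A^{1/2}-B^{1/2}\|\le\|A-B\|^{1/2}$. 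Substituting back completes the proof.

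There is no serious obstacle. The only subtle point is using the operator-norm form of the comparison (rather than the trace-norm Powers--Størmer inequality), which is why I rely on operator monotonicity/subadditivity of $t\mapsto t^{1/2}$ rather than any finer spectral calculation; this variant is precisely tailored to the operator norm appearing on the right-hand side of the statement.
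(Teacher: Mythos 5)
Your proof is correct, and it shares the paper's opening step: couple via a single $Z\sim\mathcal N(0,I_d)$ through $X=A^{1/2}Z$, $Y=B^{1/2}Z$, reducing everything to a bound on $\|A^{1/2}-B^{1/2}\|$. Where you diverge is the matrix-analytic step. The paper first passes from operator norm to Hilbert--Schmidt norm, applies the Powers--Størmer inequality to get $\|A^{1/2}-B^{1/2}\|_{\mathrm{HS}}\le\|A-B\|_{\mathrm{HS}}^{1/2}$, and then passes back, incurring dimension-dependent norm-equivalence constants that are silently absorbed by $\lesssim$. You instead prove the dimension-free operator-norm inequality $\|A^{1/2}-B^{1/2}\|\le\|A-B\|^{1/2}$ directly: from $A\preceq B+\|A-B\|I_d$, operator monotonicity (Löwner--Heinz) of $t\mapsto t^{1/2}$ gives $A^{1/2}\preceq(B+\|A-B\|I_d)^{1/2}$, and since $B$ commutes with $\|A-B\|I_d$ the scalar subadditivity of $\sqrt{\cdot}$ applies eigenvalue-by-eigenvalue to give $A^{1/2}\preceq B^{1/2}+\|A-B\|^{1/2}I_d$; symmetrizing in $A,B$ sandwiches $A^{1/2}-B^{1/2}$ and bounds its operator norm. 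Both routes yield the lemma, since the paper's $\lesssim$ tolerates dimension constants (and the factor $\E\|Z\|^p$ is $d$-dependent in either case), but your argument is somewhat tighter at the matrix level because it keeps everything in the operator norm and does not introduce a $d^{1/4}$-type factor through HS-to-operator conversions.
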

\begin{proof}{Proof of Lemma \ref{lem:gauss-gauss}}
Let $Z\sim\mathcal N(0,I_d)$ and consider the coupling $X=A^{1/2}Z$ and $Y=B^{1/2}Z$. Then
\begin{align*}
\W_p\bigl(\law(X),\law(Y)\bigr)
&\le \Bigl(\E\|X-Y\|^p\Bigr)^{1/p} \\
&= \Bigl(\E\bigl\|(A^{1/2}-B^{1/2})Z\bigr\|^p\Bigr)^{1/p} \\
&\lesssim \|A^{1/2}-B^{1/2}\|
\lesssim \|A^{1/2}-B^{1/2}\|_{\mathrm{HS}}
\overset{\text{(i)}}{\leq} \|A-B\|_{\mathrm{HS}}^{1/2} \lesssim \|A-B\|^{1/2}.
\end{align*}
where (i) holds by the Powers–Størmer inequality~\citep{powers1970free}.
\end{proof}

\subsection{Reduction to Stationary Initialization}\label{sec:stationary}
In this section, we show that, without loss of generality, we may assume the chain is initialized from the stationary distribution $\pi$: the effect of a nonstationary initialization contributes only a higher-order term to the final CLT rate.

Let $x_0\sim \mu$ and $y_0\sim \pi$ be independent. Conditional on $(x_0,y_0)$,
apply Lemma \ref{lem:forward-meeting} to obtain a coupling
$(X_t^x,X_t^y)_{t\ge 0}$ of two copies of the $P$-chain with
$X_0^x=x_0$, $X_0^y=y_0$, and meeting time
\[
  T^+:=\inf\{t\ge 0:\ X_t^x=X_t^y\},
\]
satisfying
\begin{equation}\label{eq:T-tail}
\P(T^+>k | x_0,y_0)\ \lesssim\ (V(x_0)+V(y_0))\rho^k,
  \qquad k\ge 0.
\end{equation}
Define the coupled partial sums
\[
  S_n^x:=\sum_{t=0}^{n-1} h_t(X_t^x),
  \qquad
  S_n^y:=\sum_{t=0}^{n-1} h_t(X_t^y),
  \qquad
  \bar S_n^x:=\frac{1}{\sqrt n}S_n^x, \qquad \bar S_n^y:=\frac{1}{\sqrt n}S_n^y .
\]
By construction, $X_t^x=X_t^y$ for all $t\ge T^+$, hence
\[
  S_n^x-S_n^y=\sum_{t=0}^{n-1}\bigl(h_t(X_t^x)-h_t(X_t^y)\bigr)\mathbf 1_{\{T^+>t\}}.
\]
Therefore, 
\begin{align*}
  \E\bigl[\|\bar S_n^x-\bar S_n^y\|\ \big|\ x_0,y_0\bigr]
  &\le \frac{1}{\sqrt n}\sum_{t=0}^{n-1}
      \E\Bigl[\|h_t(X_t^x)\|+\|h_t(X_t^y)\|\mathbf 1_{\{T^+>t\}}
      \ \Big|\ x_0,y_0\Bigr] \\
  &\lesssim \frac{1}{\sqrt n}\sum_{t=0}^{n-1}
      \Bigl(\E[\|h_t(X_t^x)\|^2\mid x_0]+\E[\|h_t(X_t^y)\|^2\mid y_0]\Bigr)^{1/2}
      \P(T^+>t\mid x_0,y_0)^{1/2}.
\end{align*}
By Assumption \ref{assumption:markovchain},
\begin{align*}
\E[\|h_t(X_t^x)\|^2\mid x_0]\le \E[V(x_t)\mid x_0] \leq \lambda^tV(x_0) + \frac{L}{1-\lambda}.
\end{align*}
and
$\E[\|h_t(X_t^y)\|^2\mid y_0]\le \lambda^tV(y_0) + \frac{L}{1-\lambda}$. Then, by inequality \eqref{eq:T-tail},
\begin{align*}
  \E\bigl[\|\bar S_n^x-\bar S_n^y\|\ \big|\ x_0,y_0\bigr]
  \ &\lesssim\ \frac{1}{\sqrt n}\sum_{t=0}^{n-1}
  (\lambda^tV(y_0) + \lambda^tV(y_0) + 1)^{1/2}
  \sqrt{V(x_0)+V(y_0)}\rho^{t/2}\\&\lesssim \frac{1}{\sqrt n}\sum_{t=0}^{n-1}
  (V(x_0)+V(y_0))\rho^{t/2} \lesssim \frac{1}{\sqrt n}
  (V(x_0)+V(y_0)).
\end{align*}
Then, 
\begin{equation*}
  \E\bigl[\|\bar S_n^x-\bar S_n^y\|\ \big|\ x_0,y_0\bigr]
  \ \lesssim\ \frac{1}{\sqrt n}\bigl(V(x_0)+V(y_0)+1\bigr).
\end{equation*}
Consequently,
\begin{equation}\label{eq:W1-to-stationary}
 W_1\bigl(\law(\tfrac{S_n}{\sqrt{n}}),\law_\pi(\tfrac{S_n}{\sqrt{n}})\bigr)
  \le \E\bigl[\|\bar S_n^x-\bar S_n^y\|\bigr]
  \lesssim \frac{1}{\sqrt n}\Bigl(\E_\mu[V(x_0)]+\pi(V)+1\Bigr)
  \lesssim \frac{1}{\sqrt n},
\end{equation}
where the last inequality uses Lemma~\ref{lem:pi(V)} (in particular, $\pi(V)<\infty$)
together with the standing assumption $\E_\mu[V(x_0)]<\infty$.
\subsection{Main Proof}\label{sec:proof-MC-W1-Raic}

We first work under stationary initialization and the normalization
\(\bar\Sigma_n=I_d\). The reduction from a general initialization to
\(x_0\sim\pi\) is given in Section~\ref{sec:stationary}; the removal of the
normalization \(\bar\Sigma_n=I_d\) is handled at the end.

Set
\[
  U_i:=\frac{h_i(x_i)}{\sqrt n},
  \qquad
  W:=\sum_{i=0}^{n-1}U_i.
\]
By Proposition~\ref{prop:newS}, it suffices to prove
\begin{equation}\label{eq:Rn-goal}
  R_n
  :=
  \sum_{i=0}^{n-1}
  \E\Big[
    \|U_i\|\,\W_2^2\bigl(\law(W),\law(W\mid U_i)\bigr)
  \Big]
  \lesssim n^{-1/2}.
\end{equation}

For \(x\in\X\), write
\[
  \nu_x:=\law(W\mid x_i=x).
\]
Since \(U_i\) is \(\sigma(x_i)\)-measurable, the convexity of
\(\mu\mapsto \W_2^2(\law(W),\mu)\) and disintegration give
\[
  R_n
  \le
  \sum_{i=0}^{n-1}
  \E\Big[
    \|U_i\|\,\W_2^2\bigl(\law(W),\nu_{x_i}\bigr)
  \Big].
\]
Under stationarity, \(x_i\sim\pi\), hence
\[
  R_n
  \le
  \frac1{\sqrt n}\sum_{i=0}^{n-1}
  \int_{\X}
    \|h_i(x)\|\,
    \W_2^2\bigl(\law(W),\nu_x\bigr)\,
  \pi(\d x).
\]
Moreover, since \(\law(W)=\int_{\X}\nu_y\,\pi(\d y)\), convexity of
\(\mu\mapsto \W_2^2(\mu,\nu_x)\) yields
\[
  \W_2^2\bigl(\law(W),\nu_x\bigr)
  \le
  \int_{\X}\W_2^2(\nu_y,\nu_x)\,\pi(\d y).
\]
Therefore
\begin{equation}\label{eq:Rn-mixture}
  R_n
  \le
  \frac1{\sqrt n}\sum_{i=0}^{n-1}
  \iint_{\X\times\X}
    \|h_i(x)\|\,
    \W_2^2(\nu_y,\nu_x)\,
  \pi(\d y)\pi(\d x).
\end{equation}
Fix \(i\in\{0,\dots,n-1\}\) and \(x,y\in\X\). Let
\((X_t^x)_{t\in\mathbb Z}\) and \((X_t^y)_{t\in\mathbb Z}\) be two bi-infinite
processes such that
\[
  \law\bigl((X_t^x)_{t\in\mathbb Z}\bigr)
  =
  \law\bigl((x_t)_{t\in\mathbb Z}\mid x_i=x\bigr),
  \qquad
  \law\bigl((X_t^y)_{t\in\mathbb Z}\bigr)
  =
  \law\bigl((x_t)_{t\in\mathbb Z}\mid x_i=y\bigr).
\]
By Lemma~\ref{lem:backward}, the time-reversed kernel \(P^\ast\) is
geometrically ergodic. Let \(V'\ge1\) be a drift function for \(P^\ast\) with
\(\pi(V')<\infty\). Set
\[
  \alpha:=\frac{\delta-1}{\delta}\in(0,1).
\]
Since \(u\mapsto u^\alpha\) is concave, \(V^\alpha\) and \((V')^\alpha\) are
drift functions for \(P\) and \(P^\ast\), respectively. Applying
Lemma~\ref{lem:forward-meeting} to \(P\) and \(P^\ast\), and using the standard
time-reversal property of stationary Markov chains, we may couple
\((X_t^x)\) and \((X_t^y)\) so that there exist meeting times
\(T_i^+(x,y)\) and \(T_i^-(x,y)\) such that
\[
  X_t^x=X_t^y
  \qquad\text{whenever}\qquad
  t\ge i+T_i^+(x,y)
  \ \ \text{or}\ \
  t\le i-T_i^-(x,y),
\]
and for some \(\rho_+,\rho_-\in(0,1)\),
\begin{align*}
  \P\bigl(T_i^+(x,y)>k\bigr)
  &\lesssim
  \bigl(V(x)^\alpha+V(y)^\alpha\bigr)\rho_+^k,\\
  \P\bigl(T_i^-(x,y)>k\bigr)
  &\lesssim
  \bigl((V'(x))^\alpha+(V'(y))^\alpha\bigr)\rho_-^k,
  \qquad k\ge0.
\end{align*}
Let \(\bar\rho:=\max\{\rho_+,\rho_-\}\in(0,1)\). Define
\[
  W^x:=\frac1{\sqrt n}\sum_{t=0}^{n-1}h_t(X_t^x),
  \qquad
  W^y:=\frac1{\sqrt n}\sum_{t=0}^{n-1}h_t(X_t^y).
\]
Then \(\law(W^x)=\nu_x\) and \(\law(W^y)=\nu_y\). Hence
\begin{equation}\label{eq:W2-coupling-xy}
  \W_2^2(\nu_y,\nu_x)
  \le
  \E\bigl[\|W^y-W^x\|^2\bigr]
  =
  \frac1n\E\Big\|
    \sum_{t=0}^{n-1}\Delta_t(x,y)
  \Big\|^2,
\end{equation}
where $\Delta_t(x,y):=h_t(X_t^y)-h_t(X_t^x).$ Let \(\widetilde\rho:=\bar\rho^{\delta/(2+\delta)}\in(0,1)\), and choose
\(a\in(\widetilde\rho,1)\). By weighted Cauchy--Schwarz,
\[
  \E\Big\|
    \sum_{t=0}^{n-1}\Delta_t(x,y)
  \Big\|^2
  \lesssim
  \sum_{t=0}^{n-1}a^{-|t-i|}\E\bigl[\|\Delta_t(x,y)\|^2\bigr].
\]
For \(E_t(x,y):=\{X_t^x\neq X_t^y\}\), we have
\[
  \|\Delta_t(x,y)\|^2
  \le
  2\bigl(\|h_t(X_t^x)\|^2+\|h_t(X_t^y)\|^2\bigr)\mathbf 1_{E_t(x,y)}.
\]
Applying H\"older's inequality with exponents
\((2+\delta)/2\) and \((2+\delta)/\delta\), we obtain
\[
  \E\bigl[\|\Delta_t(x,y)\|^2\bigr]
  \lesssim
  M_{i,t}(x,y)\,
  \P\bigl(E_t(x,y)\bigr)^{\delta/(2+\delta)},
\]
where
\[
  M_{i,t}(x,y)
  :=
  \Big(
    \E\big[
      \|h_t(X_t^x)\|^{2+\delta}
      +
      \|h_t(X_t^y)\|^{2+\delta}
    \big]
  \Big)^{\frac{2}{2+\delta}}.
\]
Moreover,
\[
  E_t(x,y)\subseteq
  \begin{cases}
    \{T_i^+(x,y)>t-i\}, & t\ge i,\\[2mm]
    \{T_i^-(x,y)>i-t\}, & t<i,
  \end{cases}
\]
so the meeting-time bounds imply
\[
  \P\bigl(E_t(x,y)\bigr)^{\delta/(2+\delta)}
  \lesssim
  A(x,y)\,\widetilde\rho^{|t-i|},
\]
where
\[
  A(x,y):=
  V(x)^{\frac{\delta-1}{2+\delta}}
  +V(y)^{\frac{\delta-1}{2+\delta}}
  +(V'(x))^{\frac{\delta-1}{2+\delta}}
  +(V'(y))^{\frac{\delta-1}{2+\delta}}.
\]
Combining the last three displays with \eqref{eq:W2-coupling-xy}, we get
\begin{equation}\label{eq:W2-final-xy}
  \W_2^2(\nu_y,\nu_x)
  \lesssim
  \frac{A(x,y)}{n}
  \sum_{t=0}^{n-1}
    M_{i,t}(x,y)\,
    \widehat\rho^{|t-i|},
\end{equation}
where \(\widehat\rho:=\widetilde\rho/a\in(0,1)\).

Substituting \eqref{eq:W2-final-xy} into \eqref{eq:Rn-mixture} yields
\[
  R_n
  \lesssim
  \frac1{n^{3/2}}
  \sum_{i=0}^{n-1}\sum_{t=0}^{n-1}
  \iint_{\X\times\X}
    \|h_i(x)\|\,A(x,y)\,M_{i,t}(x,y)\,
  \pi(\d y)\pi(\d x)\,
  \widehat\rho^{|t-i|}.
\]
Now apply H\"older's inequality on \(\X\times\X\) with exponents
\[
  2+\delta,\qquad \frac{2+\delta}{\delta-1},\qquad \frac{2+\delta}{2}.
\]
Since \(\|h_i\|^{2+\delta}\le V\),
\[
  \int \|h_i(x)\|^{2+\delta}\,\pi(\d x)\le \pi(V)<\infty.
\]
Also,
\[
  A(x,y)^{\frac{2+\delta}{\delta-1}}
  \lesssim
  V(x)+V(y)+V'(x)+V'(y),
\]
hence
\[
  \iint A(x,y)^{\frac{2+\delta}{\delta-1}}
  \,\pi(\d y)\pi(\d x)
  <\infty.
\]
Finally,
\[
  M_{i,t}(x,y)^{\frac{2+\delta}{2}}
  =
  \E\big[
    \|h_t(X_t^x)\|^{2+\delta}
    +
    \|h_t(X_t^y)\|^{2+\delta}
  \big].
\]
Integrating over \(x,y\sim\pi\), the unconditional laws of \(X_t^x\) and
\(X_t^y\) are both \(\pi\), so
\[
  \iint M_{i,t}(x,y)^{\frac{2+\delta}{2}}
  \,\pi(\d y)\pi(\d x)
  \le
  2\pi(V)<\infty.
\]
Therefore,
\[
  \sup_{0\le i,t\le n-1}
  \iint
    \|h_i(x)\|\,A(x,y)\,M_{i,t}(x,y)\,
  \pi(\d y)\pi(\d x)
  <\infty.
\]
It follows that
\[
  R_n
  \lesssim
  \frac1{n^{3/2}}
  \sum_{i=0}^{n-1}\sum_{t=0}^{n-1}\widehat\rho^{|t-i|}
  \lesssim
  n^{-1/2}.
\]
By Proposition~\ref{prop:newS}, this proves that under \(x_0\sim\pi\) and
\(\bar\Sigma_n=I_d\),
\[
  \W_1\left(
    \law_\pi(S_n/\sqrt{n}),
    \mathcal N(0,I_d)
  \right)
  \lesssim
  n^{-1/2}.
\]
For a general \(\bar\Sigma_n\), the linear-algebraic reduction in
Appendix~\ref{sec:local-depend-1}, together with Lemma~\ref{lem:Sigma-exists},
yields
\[
  \W_1\!\left(
    \law_\pi(S_n/\sqrt{n}),
    \mathcal N(0,\bar\Sigma_n)
  \right)
  \lesssim
  n^{-1/2}.
\]
Finally, combining this with the initialization reduction
\eqref{eq:W1-to-stationary}, Lemma~\ref{lem:Sigma-exists}, and
Lemma~\ref{lem:gauss-gauss}, we obtain
\[
  \W_1\!\left(
    \law(S_n/\sqrt{n}),
    \mathcal N(0,\Sigma_n)
  \right)
  \lesssim
  n^{-1/2}.
\]
This completes the proof.
\subsection{Proof of Lemma \ref{lem:backward}}\label{sec:proof-backward}
Let \(\pi\) denote the unique invariant probability of \(P\), and define the measure
\(\mathcal M\) on \((\mathcal X\times\mathcal X,\mathcal B\otimes\mathcal B)\) by
\[
\mathcal M(A\times B):=\int_A \pi(dx)\,P(x,B),\qquad A,B\in\mathcal B.
\]
Since \(\pi P=\pi\), the second marginal of \(\mathcal M\) is \(\pi\). Hence, by
disintegration, there exists a Markov kernel \(P^\ast\) such that
\begin{equation}\label{eq:TR}
\int_A \pi(dx)\,P(x,B)=\int_B \pi(dy)\,P^\ast(y,A),
\qquad A,B\in\mathcal B.
\end{equation}
After modifying \(P^\ast\) on a \(\pi\)-null set if necessary, we may regard it as
defined on all of \(\mathcal X\). Iterating \eqref{eq:TR} yields, for every
\(n\ge1\),
\begin{equation}\label{eq:TR-n}
\int_A \pi(dx)\,P^n(x,B)=\int_B \pi(dy)\,(P^\ast)^n(y,A),
\qquad A,B\in\mathcal B.
\end{equation}
Under Assumption~\ref{assumption:markovchain}, Theorem~15.0.1 of
\cite{Meyn12_book} implies that \(P\) is \(V\)-uniformly geometrically ergodic:
there exist constants \(R<\infty\) and \(\rho\in(0,1)\) such that
\begin{equation}\label{eq:V-unif}
\bigl\|P^n(x,\cdot)-\pi\bigr\|_V
:=\sup_{|f|\le V}\bigl|P^n f(x)-\pi(f)\bigr|
\le RV(x)\rho^n,
\qquad x\in\mathcal X,\ n\ge0.
\end{equation}
Since \(V\ge1\), testing \eqref{eq:V-unif} with \(f=\mathbf 1_A\) gives
\begin{equation}\label{eq:tv-bound}
\bigl\|P^n(x,\cdot)-\pi\bigr\|_{\mathrm{TV}}
=\sup_{A\in\mathcal B}\bigl|P^n(x,A)-\pi(A)\bigr|
\le RV(x)\rho^n.
\end{equation}
Fix \(A\in\mathcal B\) with \(\pi(A)>0\). If \(V(x)<\infty\), then for all
sufficiently large \(n\),
\[
P^n(x,A)\ge \pi(A)-RV(x)\rho^n>0.
\]
Therefore the set
\begin{equation}\label{eq:BA-def}
B_A:=\Bigl\{x\in\mathcal X:\ P^n(x,A)=0\ \text{for all }n\ge0\Bigr\}
\end{equation}
is contained in \(\{V=\infty\}\). Since \(\pi(V)<\infty\) by
Lemma~\ref{lem:pi(V)}\,,
\begin{equation}\label{eq:BA-zero}
\pi(B_A)=0.
\end{equation}

\noindent\textbf{$\psi$-irreducibility of $P^\ast$.} 
Fix \(A\in\mathcal B\) with
\(\pi(A)>0\), and define
\[
B:=\Bigl\{y\in\mathcal X:\ (P^\ast)^n(y,A)=0\ \text{for all }n\ge0\Bigr\}.
\]
Then \(B\subseteq A^c\). For \(n\ge0\), let
\[
G_n:=\Bigl\{y\in\mathcal X:\ (P^\ast)^n(y,A)>0\Bigr\},
\]
so that \(B^c=\bigcup_{n\ge0}G_n\). We claim that \(B\) is absorbing for \(P^\ast\).
Indeed, if \(y\in B\) and \(P^\ast(y,B^c)>0\), then \(P^\ast(y,G_n)>0\) for some
\(n\), and hence
\[
(P^\ast)^{n+1}(y,A)
=\int_{\mathcal X}P^\ast(y,dz)\,(P^\ast)^n(z,A)
\ge \int_{G_n}P^\ast(y,dz)\,(P^\ast)^n(z,A)>0,
\]
contradicting \(y\in B\). Thus
\[
P^\ast(y,B)=1,\qquad y\in B,
\]
and therefore, by induction,
\[
(P^\ast)^m(y,B)=1,\qquad y\in B,\ m\ge1.
\]
Applying \eqref{eq:TR-n}, we obtain
\[
\int_{B^c}\pi(dy)\,P^m(y,B)
=
\int_B \pi(dx)\,(P^\ast)^m(x,B^c)
=0,
\qquad m\ge1.
\]
Hence
\[
P^m(y,B)=0\qquad\text{for \(\pi\)-a.e.\ }y\in B^c,\ \ m\ge1.
\]
Using \(\pi P^m=\pi\),
\[
\pi(B)
=\int_{\mathcal X}\pi(dy)\,P^m(y,B)
=\int_B \pi(dy)\,P^m(y,B),
\]
so \(P^m(y,B)=1\) for \(\pi\)-a.e.\ \(y\in B\), for each \(m\ge1\). Intersecting
over \(m\ge1\), there exists \(B'\subseteq B\) with \(\pi(B')=\pi(B)\) such that
\[
P^m(y,B)=1,\qquad y\in B',\ \ m\ge1.
\]
Since \(B\subseteq A^c\), this implies \(P^m(y,A)=0\) for all \(m\ge0\) and all
\(y\in B'\), i.e.\ \(B'\subseteq B_A\). By \eqref{eq:BA-zero},
\(\pi(B')=0\), hence \(\pi(B)=0\). Therefore, for every \(A\in\mathcal B\) with
\(\pi(A)>0\), for \(\pi\)-a.e.\ \(y\) there exists \(n\) such that
\((P^\ast)^n(y,A)>0\). After modifying \(P^\ast\) on a \(\pi\)-null set if
necessary, we may assume that \(P^\ast\) is \(\pi\)-irreducible, and hence
\(\psi\)-irreducible (with \(\psi=\pi\)).

\medskip
\noindent\textbf{Step 2: Aperiodicity of $P^\ast$.} 
Suppose instead that \(P^\ast\)
has period \(d\ge2\). Then there exist measurable sets
\(D_0,\dots,D_{d-1}\) with \(\pi(D_i)>0\),
\(\pi\bigl(\bigcup_{i=0}^{d-1}D_i\bigr)=1\), and
\[
P^\ast\bigl(x,D_{i-1\ (\mathrm{mod}\ d)}\bigr)=1
\qquad\text{for \(\pi\)-a.e.\ }x\in D_i.
\]
Let
\[
N:=\mathcal X\setminus\bigcup_{i=0}^{d-1}D_i.
\]
Then \(\pi(N)=0\), and since \(\pi P=\pi\), $0=\pi(N)=\int_{\mathcal X}\pi(dy)\,P(y,N),$ so
\[
P(y,N)=0\qquad\text{for \(\pi\)-a.e.\ }y\in\mathcal X.
\]
Now fix \(j\in\{0,\dots,d-1\}\) and \(i\neq j+1 \pmod d\). Since
\(P^\ast(x,D_j)=0\) for \(\pi\)-a.e.\ \(x\in D_i\), the time-reversal identity
\eqref{eq:TR} gives
\[
\int_{D_j}\pi(dy)\,P(y,D_i)
=
\int_{D_i}\pi(dx)\,P^\ast(x,D_j)
=0.
\]
Hence $P(y,D_i)=0$ for $\text{\(\pi\)-a.e.\ }y\in D_j.$ Because this holds for every \(i\neq j+1 \pmod d\), and because \(P(y,N)=0\) for
\(\pi\)-a.e.\ \(y\), we conclude that
\[
P\bigl(y,D_{j+1\ (\mathrm{mod}\ d)}\bigr)=1
\qquad\text{for \(\pi\)-a.e.\ }y\in D_j.
\]
Thus, modulo the \(\pi\)-null set \(N\), the sets \(D_0,\dots,D_{d-1}\) form a
cyclic decomposition for \(P\), contradicting the assumed aperiodicity of \(P\).
Therefore \(P^\ast\) is aperiodic.

\medskip
\noindent\textbf{Step 3: Geometric Ergodicity of $P^\ast$.} 
As discussed in Section~\ref{sec:splitchain}, there exist an accessible small set
\(\bar C\), an integer \(m\ge1\), a constant \(\beta\in(0,1)\), and a probability
measure \(\nu\) with \(\nu(\bar C)=1\) such that, for $Q:=P^m,$ we have
\begin{equation*}
Q(x,\cdot)\ge \beta \mathbf 1_{\bar C}(x)\nu(\cdot),
\qquad x\in\mathcal X.
\end{equation*}
Let \(Q^\ast\) denote the time-reversal of \(Q\) with respect to \(\pi\). By
\eqref{eq:TR-n}, $Q^\ast=(P^\ast)^m.$

Choose a measurable \(r:\mathcal X\times\mathcal X\to[0,1]\) such that
\begin{equation}\label{eq:r-def}
\int_A r(x,y)\,Q(x,dy)
=\beta \mathbf 1_{\bar C}(x)\nu(A),
\qquad x\in\mathcal X,\ A\in\mathcal B.
\end{equation}
Define the split kernel \(\widetilde Q\) on
\(\widetilde{\mathcal X}:=\mathcal X\times\{0,1\}\) by
\begin{align}
\widetilde Q\big((x,i),A\times\{1\}\big)
&:= \int_A r(x,y)\,Q(x,dy), \label{eq:split-1}\\
\widetilde Q\big((x,i),A\times\{0\}\big)
&:= \int_A (1-r(x,y))\,Q(x,dy). \nonumber
\end{align}
Its \(\mathcal X\)-marginal is \(Q\), so by iteration
\begin{equation}\label{eq:proj-forward}
\widetilde Q^n\big((x,i),B\times\{0,1\}\big)=Q^n(x,B),
\qquad n\ge0.
\end{equation}
Define \(\widetilde\pi\) on \(\widetilde{\mathcal X}\) by
\begin{align*}
\widetilde\pi(A\times\{1\}) := \beta\pi(\bar C)\nu(A), \qquad \widetilde\pi(A\times\{0\}) := \pi(A)-\beta\pi(\bar C)\nu(A).
\end{align*}
A direct calculation from \eqref{eq:r-def} shows that \(\widetilde\pi\) is an
invariant probability for \(\widetilde Q\), and its \(\mathcal X\)-marginal is \(\pi\). Let \(\widetilde Q^\ast\) be the time-reversal of \(\widetilde Q\) with respect to
\(\widetilde\pi\), and set
\[
\mathcal A:=\bar C\times\{1\}.
\]
We claim that \(\mathcal A\) is an atom for \(\widetilde Q^\ast\). Indeed, for
\(E\in\mathcal B\) and \(B\in\widetilde{\mathcal B}\), \eqref{eq:split-1},
\eqref{eq:r-def}, and the time-reversal identity for \(\widetilde Q\) yield
\begin{align*}
\int_B \widetilde\pi(d(x,i))\widetilde Q\big((x,i),E\times\{1\}\big)
&= \int_B \widetilde\pi(d(x,i))\,\beta\mathbf 1_{\bar C}(x)\nu(E) \\
&= \int_{E\times\{1\}} \widetilde\pi(d(y,1))
   \widetilde Q^\ast\big((y,1),B\big).
\end{align*}
Since \(\widetilde\pi(dy,1)=\beta\pi(\bar C)\nu(dy)\), we obtain
\[
\nu(E)\int_B \mathbf 1_{\bar C}(x)\widetilde\pi(d(x,i))
=
\pi(\bar C)\int_E \nu(dy)\,\widetilde Q^\ast((y,1),B).
\]
Because this holds for every \(E\in\mathcal B\), it follows that for
\(\nu\)-a.e.\ \(y\),
\begin{equation}\label{eq:atom-kernel}
\widetilde Q^\ast((y,1),B)
=
\frac{1}{\pi(\bar C)}\int_B \mathbf 1_{\bar C}(x)\,\widetilde\pi(d(x,i)),
\qquad B\in\widetilde{\mathcal B},
\end{equation}
which is independent of \(y\). Since \(\widetilde\pi(\cdot\mid \mathcal A)\) is
proportional to \(\nu\), after modifying \(\widetilde Q^\ast\) on a
\(\widetilde\pi\)-null subset of \(\mathcal A\) we may assume that
\eqref{eq:atom-kernel} holds for all \((y,1)\in\mathcal A\). Thus \(\mathcal A\)
is an atom, hence a petite set, for \(\widetilde Q^\ast\).

Let \(\tau_{\mathcal A}:=\inf\{n\ge1:\widetilde Z_n\in\mathcal A\}\) and
\(\mu_{\mathcal A}:=\widetilde\pi(\cdot\mid \mathcal A)\). By
Lemma~\ref{lem:geom_tail}, for some \(a\in(1,a_0]\),
\begin{equation}\label{eq:exp-forward}
\E_{\mu_{\mathcal A}}^{\widetilde Q}\!\left[a^{\tau_{\mathcal A}}\right]<\infty.
\end{equation}
Now iterate the one-step reversal identity for \(\widetilde Q\): for any
\(E_0,\dots,E_n\in\widetilde{\mathcal B}\),
\[
\P_{\widetilde\pi}^{\widetilde Q}(Z_0\in E_0,\dots,Z_n\in E_n)
=
\P_{\widetilde\pi}^{\widetilde Q^\ast}(Z_0\in E_n,\dots,Z_n\in E_0).
\]
Taking \(E_0=E_n=\mathcal A\) and \(E_1=\cdots=E_{n-1}=\mathcal A^c\) gives
\[
\widetilde\pi(\mathcal A)\,
\P_{\mu_{\mathcal A}}^{\widetilde Q}(\tau_{\mathcal A}=n)
=
\widetilde\pi(\mathcal A)\,
\P_{\mu_{\mathcal A}}^{\widetilde Q^\ast}(\tau_{\mathcal A}=n),
\qquad n\ge1.
\]
Hence
\begin{equation*}
\P_{\mu_{\mathcal A}}^{\widetilde Q}(\tau_{\mathcal A}=n)
=
\P_{\mu_{\mathcal A}}^{\widetilde Q^\ast}(\tau_{\mathcal A}=n),
\qquad n\ge1,
\end{equation*}
and therefore
\begin{equation}\label{eq:exp-reverse}
\E_{\mu_{\mathcal A}}^{\widetilde Q^\ast}\!\left[a^{\tau_{\mathcal A}}\right]
<\infty.
\end{equation}
Since \(\mathcal A\) is an atom for \(\widetilde Q^\ast\), the left-hand side of
\eqref{eq:exp-reverse} is constant over \(z\in\mathcal A\); hence
\begin{equation}\label{eq:exp-uniform}
\sup_{z\in\mathcal A}
\E_{z}^{\widetilde Q^\ast}\!\left[a^{\tau_{\mathcal A}}\right]
<\infty.
\end{equation}
By Proposition~11.1.4 of \cite{douc2018markov}, \(\widetilde Q\) is
\(\psi\)-irreducible and aperiodic. Together with \eqref{eq:exp-forward},
Theorem~15.0.1 of \cite{Meyn12_book} shows that \(\widetilde Q\) is geometrically
ergodic. Applying Steps~1--2 above to \(\widetilde Q\), we conclude that
\(\widetilde Q^\ast\) is also \(\psi\)-irreducible and aperiodic. Since
\(\mathcal A\) is a petite set for \(\widetilde Q^\ast\) and
\eqref{eq:exp-uniform} holds, another application of
Theorem~15.0.1 of \cite{Meyn12_book} yields geometric ergodicity of
\(\widetilde Q^\ast\): there exist \(\gamma\in(0,1)\) and a measurable
\(\widetilde M:\widetilde{\mathcal X}\to[0,\infty)\) with
\(\widetilde\pi(\widetilde M)<\infty\) such that
\begin{equation}\label{eq:geom-tilde}
\bigl\|\widetilde Q^{\ast n}(z,\cdot)-\widetilde\pi\bigr\|_{\mathrm{TV}}
\le \widetilde M(z)\gamma^n,
\qquad z\in\widetilde{\mathcal X},\ n\ge0.
\end{equation}
Because \(\widetilde\pi\) has \(\mathcal X\)-marginal \(\pi\), there exists a
regular conditional distribution \(\Lambda(x,\cdot)\) on \(\{0,1\}\) such that
\[
\widetilde\pi(dx,di)=\pi(dx)\Lambda(x,di).
\]
Write \(\widetilde\delta_x:=\delta_x\otimes\Lambda(x,\cdot)\) and
\(\widehat B:=B\times\{0,1\}\). Combining \eqref{eq:proj-forward} with the
\(n\)-step reversal identities for \(Q\) and \(\widetilde Q\), we obtain, for every
\(n\ge0\) and \(A,B\in\mathcal B\),
\[
\int_B \pi(dy)\,Q^{\ast n}(y,A)
=
\int_{\widehat B}\widetilde\pi(dw)\,
\widetilde Q^{\ast n}(w,\widehat A).
\]
Disintegrating \(\widetilde\pi\) with respect to \(\pi\) therefore gives
\begin{equation}\label{eq:projection}
Q^{\ast n}(x,B)=\widetilde\delta_x\widetilde Q^{\ast n}(\widehat B)
\qquad\text{for \(\pi\)-a.e.\ }x.
\end{equation}
Since \(\widetilde\pi(\widehat B)=\pi(B)\), \eqref{eq:projection} and
\eqref{eq:geom-tilde} imply
\begin{align*}
\bigl\|Q^{\ast n}(x,\cdot)-\pi\bigr\|_{\mathrm{TV}}
= \sup_{B\in\mathcal B}
   \Bigl|\widetilde\delta_x\widetilde Q^{\ast n}(\widehat B)
         -\widetilde\pi(\widehat B)\Bigr| &\le \bigl\|\widetilde\delta_x\widetilde Q^{\ast n}-\widetilde\pi
    \bigr\|_{\mathrm{TV}} \\
&\le \int \widetilde\delta_x(dz)\,
      \bigl\|\widetilde Q^{\ast n}(z,\cdot)-\widetilde\pi\bigr\|_{\mathrm{TV}} \\
&\le \Bigl(\int \widetilde\delta_x(dz)\,\widetilde M(z)\Bigr)\gamma^n
=: M_Q(x)\gamma^n.
\end{align*}
Moreover, $\pi(M_Q)=\widetilde\pi(\widetilde M)<\infty.$ Thus \(Q^\ast\) is geometrically ergodic.

Finally, let \(n=qm+r\) with \(0\le r<m\). Since \(Q^\ast=(P^\ast)^m\),
\[
\delta_x(P^\ast)^n=\delta_x(Q^\ast)^q(P^\ast)^r.
\]
Using \(\pi(P^\ast)^r=\pi\) and contraction of total variation under a Markov
kernel,
\[
\bigl\|(P^\ast)^n(x,\cdot)-\pi\bigr\|_{\mathrm{TV}}
=
\bigl\|(\delta_x(Q^\ast)^q-\pi)(P^\ast)^r\bigr\|_{\mathrm{TV}}
\le
\bigl\|Q^{\ast q}(x,\cdot)-\pi\bigr\|_{\mathrm{TV}}
\le M_Q(x)\gamma^q.
\]
If \(\rho:=\gamma^{1/m}\in(0,1)\), then \(\gamma^q\le \rho^{-(m-1)}\rho^n\), so
\[
\bigl\|(P^\ast)^n(x,\cdot)-\pi\bigr\|_{\mathrm{TV}}
\le M_P(x)\rho^n,
\qquad
M_P(x):=\rho^{-(m-1)}M_Q(x).
\]
Hence \(P^\ast\) is geometrically ergodic.

\subsection{Proof of Lemma \ref{lem:forward-meeting}}\label{sec:proof-forward-meeting}
Recall that for two probability measure $\mu, \nu$ on $(\mathcal X, \mathcal B)$, 
\begin{align*}
\|\mu-\nu\|_{\operatorname{TV}}:= \sup_{A \in \mathcal B}|\mu(A)-\nu(A)| = \frac{1}{2}\sup_{\|f\|\leq 1}|\E_\mu[f] - \E_\nu[f]|.
\end{align*}
Also define the weighted $V$-norm (for $V \geq 1$):
\begin{align*}
\|\mu-\nu\|_{V}:=\sup_{\|f\|\leq V}|\E_\mu[f] - \E_\nu[f]|.
\end{align*}
Since $V \geq 1$, $\|\mu-\nu\|_{\operatorname{TV}} \leq \frac{1}{2}\|\mu-\nu\|_{V}.$ Under Assumption \ref{assumption:markovchain}, by \cite[Theorem 15.0.1]{Meyn12_book}, there exists $\rho \in (0,1)$ such that for all $x \in \mathcal X$ and all $k \geq 0$,
\begin{align*}
    \|P^k(x, \cdot) - \mu\|_{V} \lesssim V(x)\rho^k.
\end{align*}
Therefore, we have
\begin{align*}
\|P^k(x, \cdot)-P^k(y, \cdot)\|_{\operatorname{TV}} \lesssim (V(x)+V(y))\rho^k, \quad \forall x, y \in \mathcal X.
\end{align*}
Recall from \cite{goldstein1979maximal} that for any Markov kernel $P$ and any
$x,y\in\mathcal X$, one can construct a maximal  coupling of the
entire trajectories $\{(X_t^x,X_t^y)\}_{t\ge 0}$ with $X_0^x=x$ and $X_0^y=y$,
such that once the two chains meet they evolve together thereafter, and for every
$k\ge 0$,
\begin{align*}
  \P(X_k^x\neq X_k^y)
  =
  \bigl\|P^k(x,\cdot)-P^k(y,\cdot)\bigr\|_{\operatorname{TV}}.
\end{align*}
Given such a coupling, the meeting time $T^+(x,y):= \inf\{t \geq 0: X_t^x = X_t^y\}$ satisfies
\begin{align*}
\mathbb P(T^+(x,y)>k) = \mathbb P(X_k^x \neq X_k^y) = \|P^k(x, \cdot)- P^k(y, \cdot)\|_{\operatorname{TV}} \lesssim (V(x) + V(y))\rho^k, \quad \forall k \geq 0.
\end{align*}
Therefore, for any $p \geq 1$, 
\begin{align*}
    \E[(T^+(x,y))^p] &= \sum_{k=0}^{\infty}((k+1)^p - k^p)\mathbb P(T^+(x,y)>k)\\
    &\leq p\sum_{k=0}^{\infty}(k+1)^{p-1}\mathbb P(T^+(x,y)>k)\\
    &\lesssim (V(x) + V(y))\sum_{k=0}^{\infty}(k+1)^{p-1}\rho^k \lesssim (V(x) + V(y)).
\end{align*}
This completes the proof of Lemma \ref{lem:forward-meeting}.
\section{Proof of Theorem \ref{thm:MC-Wp}}\label{sec:proof-MC-Wp}
In what follows, we prove Theorem~\ref{thm:MC-Wp} by filling in the details of the proof outline
presented in Section~\ref{sec:outline-MC-Wp}.

\subsection{Preliminaries}
In this section, we collect several useful lemmas that will be used in the proof of Theorem~\ref{thm:MC-Wp}.
\begin{lemma}\label{lem:K}
Under Lemma~\ref{lem:geom_tail}, for every \(q\ge 1\) there exists
\(C_q<\infty\) such that
\[
\E\bigl|K_n-\lfloor n/\E[\Lambda_2] \rfloor\bigr|^{q}
\le C_q\, n^{q/2},
\qquad \forall n\ge 1.
\]
\end{lemma}
\begin{proof}{Proof of Lemma \ref{lem:K}}Write
\[
\mu:=\E[\Lambda_2],\qquad
k_0:=\lfloor n/\mu\rfloor,
\qquad\text{and}\qquad
N_n:=\min\{k\ge1:T_k>n\},
\]
so that $K_n=N_n+1.$ We first show that the cycle lengths admit a uniform exponential moment.
By Lemma~\ref{lem:geom_tail}, there exist constants \(b<\infty\) and
\(\rho\in(0,1)\) such that
\[
\sup_{i\ge1}\P(L_i>\ell)\le b\,\rho^\ell,
\qquad \forall \ell\ge0.
\]
Fix \(\theta_L\in(0,-\log\rho)\), and set
\[
r:=e^{\theta_L}\rho\in(0,1).
\]
For any nonnegative integer-valued random variable \(Y\), we have the identity
\[
\E[e^{\theta_L Y}]
=
1+\sum_{\ell\ge0}\bigl(e^{\theta_L(\ell+1)}-e^{\theta_L\ell}\bigr)\P(Y>\ell).
\]
Applying this with \(Y=L_i\), we obtain
\[
\E[e^{\theta_L L_i}]
=
1+(e^{\theta_L}-1)\sum_{\ell\ge0}e^{\theta_L\ell}\P(L_i>\ell)
\le
1+(e^{\theta_L}-1)b\sum_{\ell\ge0}(e^{\theta_L}\rho)^\ell.
\]
Since \(r<1\), the geometric series converges, and therefore $\sup_{i\ge1}\E[e^{\theta_L L_i}]<\infty.$ Because \(\Lambda_i=mL_i\), it follows that there exists
\(\theta_\Lambda>0\) such that
\begin{equation}\label{eq:Lambda-exp-moment}
\sup_{i\ge1}\E[e^{\theta \Lambda_i}]<\infty,
\qquad \forall \theta\in(0,\theta_\Lambda].
\end{equation}
Next, by Lemma \ref{lem:geom_tail}, the variables \[ X_i:=\Lambda_i-\mu,\qquad i\ge 1, \] are independent and sub-exponential by \eqref{eq:Lambda-exp-moment} and centered when $i \geq 2$. Therefore, Bernstein's inequality yields constants \(c,C>0\) such that, for all \(k\ge 1\) and \(u\ge0\), \[ \P\Bigl(\Bigl|\sum_{i=1}^{k}X_i\Bigr|\ge u\Bigr) \le C\exp\!\left[-c\min\!\left(\frac{u^2}{k},u\right)\right]. \] Since $T_k-k\mu = \sum_{i=1}^{k}X_i,$ \begin{equation}\label{eq:Tk_conc} \P\bigl(|T_k-k\mu|\ge u\bigr) \le C\exp\!\left[-c\min\!\left(\frac{u^2}{k},u\right)\right]. \end{equation}
We now derive a deviation bound for \(N_n\). For any integer \(r\ge2\),
\[
\{N_n\ge k_0+r\}=\{T_{k_0+r-1}\le n\}.
\]
Moreover,
\[
n-(k_0+r-1)\mu
=
(n-k_0\mu)-(r-1)\mu
\le
\mu-(r-1)\mu
=
-(r-2)\mu,
\]
and hence
\[
\{N_n\ge k_0+r\}
\subseteq
\Bigl\{
T_{k_0+r-1}-(k_0+r-1)\mu\le -(r-2)\mu
\Bigr\}.
\]
Similarly, for \(1\le r\le k_0-1\),
\[
\{N_n\le k_0-r\}=\{T_{k_0-r}>n\},
\]
and since
\[
n-(k_0-r)\mu=(n-k_0\mu)+r\mu\ge r\mu,
\]
we obtain
\[
\{N_n\le k_0-r\}
\subseteq
\Bigl\{
T_{k_0-r}-(k_0-r)\mu\ge r\mu
\Bigr\}.
\]
When \(r\ge k_0\), the event \(\{N_n\le k_0-r\}\) is empty. Applying \eqref{eq:Tk_conc} and using that
\(k_0+r-1\lesssim n+r\) and \(k_0-r\lesssim n\), we obtain constants
\(c',C'>0\) such that, for all \(r\ge1\),
\begin{equation}\label{eq:N_tail}
\P\bigl(|N_n-k_0|\ge r\bigr)
\le
C'\exp\!\left[-c'\min\!\left(\frac{r^2}{n},r\right)\right].
\end{equation}
Finally, by the tail-integral formula,
\[
\E|N_n-k_0|^q
=
q\int_0^\infty t^{q-1}\P(|N_n-k_0|\ge t)\,dt.
\]
Using \eqref{eq:N_tail}, we get
\[
\E|N_n-k_0|^q
\le
qC'\int_0^\infty t^{q-1}
\exp\!\left[-c'\min\!\left(\frac{t^2}{n},t\right)\right]dt.
\]
We split the integral at \(t=n\). For \(0\le t\le n\), we have
\(\min(t^2/n,t)=t^2/n\), and the change of variables \(t=\sqrt n\,x\) yields
\[
\int_0^n t^{q-1}e^{-c't^2/n}\,dt
\lesssim
n^{q/2}\int_0^\infty x^{q-1}e^{-c'x^2}\,dx
\lesssim
n^{q/2}.
\]
For \(t\ge n\), we have \(\min(t^2/n,t)=t\), so $\int_n^\infty t^{q-1}e^{-c't}dt<\infty.$ Thus 
\[
\E|N_n-k_0|^q\le \widetilde C_q n^{q/2}
\]
for some constant \(\widetilde C_q<\infty\). Since \(K_n=N_n+1\), we have
\[
|K_n-k_0|^q\le 2^{q-1}\bigl(|N_n-k_0|^q+1\bigr),
\]
and therefore
\[
\E|K_n-k_0|^q
\le
2^{q-1}\bigl(\E|N_n-k_0|^q+1\bigr)
\le
C_q\, n^{q/2}.
\]
This completes the proof.
\end{proof}
\begin{lemma}\label{lem:mds-tildeMi-proof}
Let \((\Omega,\mathcal F,\mathbb P)\) be a probability space, and let
\((\mathcal F_t)_{t\ge 0}\) be a filtration. Suppose that
\((M_t)_{t\ge 0}\) is an \(\R^d\)-valued martingale with respect to
\((\mathcal F_t)\), with \(M_0=0\). Let \((T_i)_{i\ge 0}\) be an increasing
sequence of stopping times such that
\[
0=T_0<T_1<T_2<\cdots
\qquad \text{a.s.},
\]
and define $\mathcal G_i:=\mathcal F_{T_i}$ for any $i\ge 0.$ For each \(i\ge 1\), let $\widetilde M_i:=M_{T_i}-M_{T_{i-1}}.$ Assume that there exists \(q>1\) such that
\begin{equation}\label{eq:UI-assumption}
\sup_{t\ge 0}\E\|M_{t\wedge T_i}\|^q<\infty,
\qquad \text{for every fixed } i\ge 1.
\end{equation}
Then \((\widetilde M_i,\mathcal G_i)_{i\ge 1}\) is a martingale difference
sequence, i.e.,
\[
\E[\widetilde M_i\mid \mathcal G_{i-1}]=0,
\qquad i\ge 1,
\]
and, in particular,
\[
\E[\widetilde M_i]=0,
\qquad i\ge 1.
\]
\end{lemma}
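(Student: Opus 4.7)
The plan is to reduce the claim to a standard optional sampling argument applied to the stopped martingale $M^{T_i}:=(M_{t\wedge T_i})_{t\ge 0}$. Since the conditional expectation acts componentwise on $\R^d$-valued integrable random variables, it suffices to treat the case $d=1$; the general case then follows by applying the scalar argument to each coordinate of $M$.

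Fix $i\ge 1$. First I would observe that, because $(M_t)$ is a martingale and $T_i$ is a stopping time, the stopped process $M^{T_i}$ is again an $(\mathcal F_t)$-martingale. By hypothesis \eqref{eq:UI-assumption}, $M^{T_i}$ is bounded in $L^q$ for some $q>1$, hence uniformly integrable; therefore it converges a.s.\ and in $L^1$ to a limit $M^{T_i}_\infty$. Because $T_i<\infty$ a.s., the stopped process is eventually constant equal to $M_{T_i}$, so $M^{T_i}_\infty=M_{T_i}$ a.s.\ and in $L^1$. In particular $M_{T_i}$ and (by the same argument with $i$ replaced by $i-1$, using \eqref{eq:UI-assumption} for index $i-1$, noting $\|M_{t\wedge T_{i-1}}\|\le \sup_{s\le t}\|M_{s\wedge T_i}\|$ is bounded in $L^q$ by Doob's maximal inequality) also $M_{T_{i-1}}$ lie in $L^1$, which gives integrability of $\widetilde M_i$.

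Next I would apply Doob's optional sampling theorem to the uniformly integrable martingale $M^{T_i}$ with the stopping time $T_{i-1}$, which satisfies $T_{i-1}\le T_i$ a.s. This yields
\[
\E\bigl[M^{T_i}_\infty\,\big|\,\mathcal F_{T_{i-1}}\bigr]=M^{T_i}_{T_{i-1}}\quad\text{a.s.}
\]
Substituting $M^{T_i}_\infty=M_{T_i}$ and $M^{T_i}_{T_{i-1}}=M_{T_{i-1}\wedge T_i}=M_{T_{i-1}}$ (since $T_{i-1}\le T_i$), and using $\mathcal G_{i-1}=\mathcal F_{T_{i-1}}$, we obtain
\[
\E[M_{T_i}\mid \mathcal G_{i-1}]=M_{T_{i-1}},
\quad\text{hence}\quad
\E[\widetilde M_i\mid \mathcal G_{i-1}]=0.
\]
Taking unconditional expectations gives $\E[\widetilde M_i]=0$, and the martingale difference property of $(\widetilde M_i,\mathcal G_i)_{i\ge 1}$ follows.

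The only delicate point — and the step I would be most careful about — is the uniform integrability justification underlying the optional sampling step. This is where the quantitative hypothesis \eqref{eq:UI-assumption} with $q>1$ is essential: without an $L^p$ bound for some $p>1$, mere $L^1$-boundedness of $(M_{t\wedge T_i})$ would not guarantee uniform integrability, and the identity $\E[M_{T_i}\mid\mathcal F_{T_{i-1}}]=M_{T_{i-1}}$ could fail. Everything else is routine bookkeeping with stopped martingales.
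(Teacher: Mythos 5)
Your proposal is correct and follows essentially the same route as the paper: stop the martingale at $T_i$, use the $L^q$ bound ($q>1$) from \eqref{eq:UI-assumption} to get uniform integrability, and apply optional sampling at $T_{i-1}\le T_i$ to conclude $\E[M_{T_i}\mid\mathcal F_{T_{i-1}}]=M_{T_{i-1}}$. The extra care you take with the a.s.\ finiteness of $T_i$ and the integrability of $M_{T_{i-1}}$ is sound and only makes explicit what the paper leaves implicit.
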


\begin{proof}{Proof of Lemma~\ref{lem:mds-tildeMi-proof}}
Fix \(i\ge 1\), and define the stopped process
\[
N_t:=M_{t\wedge T_i},\qquad t\ge 0.
\]
Since \(M\) is a martingale and \(T_i\) is a stopping time, \((N_t)_{t\ge 0}\)
is again a martingale. By \eqref{eq:UI-assumption} and the fact that \(q>1\),
the family \(\{N_t:t\ge 0\}\) is uniformly integrable. Therefore, the optional
sampling theorem for uniformly integrable martingales applies to the stopping
times \(T_{i-1}\le T_i\) and yields
\[
\E\!\left[M_{T_i}\mid \mathcal F_{T_{i-1}}\right]
=
\E\!\left[N_{T_i}\mid \mathcal F_{T_{i-1}}\right]
=
N_{T_{i-1}}
=
M_{T_{i-1}}.
\]
Recalling that \(\mathcal G_{i-1}=\mathcal F_{T_{i-1}}\), we obtain
\[
\E[\widetilde M_i\mid \mathcal G_{i-1}]
=
\E[M_{T_i}-M_{T_{i-1}}\mid \mathcal G_{i-1}]
=0.
\]
Taking expectations gives \(\E[\widetilde M_i]=0\).
\end{proof}

\begin{lemma}[Theorem~4.1 in \cite{pinelis1994optimum}]
\label{lem:BR_Hilbert}
Let \((\Omega,\mathcal F,\mathbb P)\) be a probability space equipped with a
filtration \((\mathcal F_k)_{k\ge 0}\). Let \(H\) be a real separable Hilbert
space with norm \(\|\cdot\|\). Let \((d_k)_{k\ge 1}\) be an \(H\)-valued
martingale difference sequence, i.e., for each \(k\ge 1\),
\(d_k\) is \(\mathcal F_k\)-measurable, \(\E\|d_k\|<\infty\), and
\[
\E[d_k\mid \mathcal F_{k-1}]=0
\qquad \text{a.s.}
\]
Define
\[
M_n:=\sum_{k=1}^n d_k,\qquad n\ge 1,
\qquad\text{and}\qquad
M_0:=0.
\]
Then, for every \(p\ge 2\), there exists a constant \(C_p\in(0,\infty)\),
depending only on \(p\), such that for all integers \(n\ge 1\),
\[
\E\|M_n\|^p
\le
C_p\left\{
\E\Bigl(\sum_{k=1}^n \E[\|d_k\|^2\mid \mathcal F_{k-1}]\Bigr)^{p/2}
+
\E\sum_{k=1}^n \|d_k\|^p
\right\}.
\]
\end{lemma}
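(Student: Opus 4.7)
The plan is to reduce this Hilbert-valued Rosenthal--Burkholder estimate to a scalar inequality on the pathwise quadratic variation $[M,M]_n := \sum_{k=1}^n \|d_k\|^2$, by combining the Burkholder--Davis--Gundy (BDG) inequality with a Doob decomposition of $[M,M]_n$ and a standard absorption argument. First, I would invoke Hilbert-space BDG to obtain a constant $c_p$ depending only on $p$ with $\E\|M_n\|^p \le c_p\,\E [M,M]_n^{p/2}$; because the norm on $H$ is induced by an inner product, the classical scalar good-$\lambda$ / Davis-decomposition argument extends verbatim after expanding $\|M_k\|^2-\|M_{k-1}\|^2 = \|d_k\|^2 + 2\langle M_{k-1}, d_k\rangle$ and applying a maximal inequality, with $[M,M]_n$ playing the role of the scalar quadratic variation.

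Second, I would perform the Doob decomposition $[M,M]_n = A_n + Y_n$, where $A_n := \sum_{k=1}^n \E[\|d_k\|^2 \mid \F_{k-1}]$ is predictable and $Y_n$ is a scalar martingale with increments $e_k := \|d_k\|^2 - \E[\|d_k\|^2 \mid \F_{k-1}]$. The triangle inequality in $L^{p/2}$ then gives $\|[M,M]_n\|_{p/2} \le \|A_n\|_{p/2} + \|Y_n\|_{p/2}$; the first term on the right already matches the conditional-variance contribution that appears in the statement, so it remains to dominate $\|Y_n\|_{p/2}$ by the right-hand side of the lemma.

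Third, I would control $\|Y_n\|_{p/2}$ by applying scalar BDG to $Y_n$, followed by the pointwise bound $\sum_k e_k^2 \le 2\max_k \|d_k\|^2\cdot [M,M]_n$. Cauchy--Schwarz in $L^{p/2}$ together with the crude estimate $\max_k \|d_k\|^p \le \sum_k \|d_k\|^p$ then produces a bound of the form
\[
\|Y_n\|_{p/2} \;\lesssim_{p}\; \Big(\E\sum_{k=1}^n\|d_k\|^p\Big)^{1/p}\cdot \|[M,M]_n\|_{p/2}^{1/2}.
\]
The hard part will be closing this cross term: by Young's inequality with a sufficiently small parameter, a fraction of $\|[M,M]_n\|_{p/2}$ can be absorbed into the left-hand side of the triangle inequality from Step~2, yielding a clean bound on $\E[M,M]_n^{p/2}$ in terms of $\E A_n^{p/2}$ and $\E\sum_k\|d_k\|^p$, which then feeds back into Step~1.

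A cleaner alternative that sidesteps the absorption step is to apply the scalar Rosenthal inequality directly to the martingale $Y_n$ (valid since $p/2 \ge 1$), and then bound $\sum_k \E|e_k|^{p/2}$ via Jensen's inequality against $\sum_k \E\|d_k\|^p$ and $\sum_k \E\!\bigl[\|d_k\|^2\mid \F_{k-1}\bigr]^{p/2}$, the latter being pointwise dominated by $\E\!\bigl[\|d_k\|^p \mid \F_{k-1}\bigr]$ via Jensen. This route avoids iteration entirely, at the cost of invoking the scalar Rosenthal bound as a black box rather than proving it from BDG in parallel.
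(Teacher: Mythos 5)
You should first note that the paper does not prove this lemma at all: it is imported verbatim as Theorem~4.1 of Pinelis (1994), so there is no internal argument to match, and you are effectively re-proving a known result. Your overall strategy (Hilbert-space BDG to reduce to $[M,M]_n=\sum_k\|d_k\|^2$, Doob decomposition $[M,M]_n=A_n+Y_n$, then control of the compensator martingale $Y_n$ with an absorption step) is indeed the classical Burkholder/Hall--Heyde route and is viable in a Hilbert space, since BDG with the square function $\bigl(\sum_k\|d_k\|^2\bigr)^{1/2}$ holds there. However, two steps as written are genuinely wrong or incomplete. First, the pointwise bound $\sum_k e_k^2\le 2\max_k\|d_k\|^2\cdot[M,M]_n$ is false: with $u_k:=\|d_k\|^2$ and $e_k=u_k-\E[u_k\mid\F_{k-1}]$, on the event where $u_k$ is small but $\E[u_k\mid\F_{k-1}]$ is not, $e_k^2\approx\E[u_k\mid\F_{k-1}]^2$ is not dominated by $\max_j u_j\cdot\sum_j u_j$ (take $n=1$, $d_1=2$ w.p.\ $1/5$ and $d_1=-1/2$ w.p.\ $4/5$; on $\{d_1=-1/2\}$ the left side is $9/16$ while the right side is $1/8$). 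The step is repairable -- use $e_k^2\le 2u_k^2+2\E[u_k\mid\F_{k-1}]^2$, bound $\sum_k u_k^2\le\max_k u_k\cdot[M,M]_n$ and $\sum_k\E[u_k\mid\F_{k-1}]^2\le\max_k\E[u_k\mid\F_{k-1}]\cdot A_n$, and control $\E\max_k\E[u_k\mid\F_{k-1}]^{p/2}\le\sum_k\E\|d_k\|^p$ by conditional Jensen -- but as stated the key estimate fails, and the subsequent Cauchy--Schwarz/Young absorption (which also needs an a priori finiteness of $\E[M,M]_n^{p/2}$, arranged by stopping or truncation) rests on it.

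Second, the ``cleaner alternative'' does not close for $p>4$ and is not really iteration-free. Applying the conditional-variance form of Rosenthal to $Y_n$ requires exponent $p/2\ge 2$ (for $p/2\in[1,2)$ that form is not available, though there von Bahr--Esseen or subadditivity $\bigl(\sum_k e_k^2\bigr)^{p/4}\le\sum_k|e_k|^{p/2}$ does finish the case $2\le p\le 4$), and when $p/2>2$ it produces the term $\E\bigl(\sum_k\E[e_k^2\mid\F_{k-1}]\bigr)^{p/4}$, which involves conditional \emph{fourth} moments $\E[\|d_k\|^4\mid\F_{k-1}]$. That quantity is not one of the two terms allowed on the right-hand side of the lemma; reducing it requires either an induction over dyadic ranges of $p$ or a conditional interpolation between $\E[\|d_k\|^2\mid\F_{k-1}]$ and $\E[\|d_k\|^p\mid\F_{k-1}]$ -- your Jensen step only handles the additive term $\sum_k\E|e_k|^{p/2}$. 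So the proposal needs these repairs (or simply the citation to Pinelis, as the paper does) before it constitutes a proof.
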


The previous lemma immediately yields the following martingale-transform bound.

\begin{lemma}\label{lem:transform_BR}
Under the assumptions and notation of Lemma~\ref{lem:BR_Hilbert}, let
\((a_k)_{k\ge 1}\) be a real-valued predictable sequence, i.e.,
\(a_k\) is \(\mathcal F_{k-1}\)-measurable for each \(k\ge 1\). Define
\[
T_n:=\sum_{k=1}^n a_k d_k,\qquad n\ge 1.
\]
Then, for every \(p\ge 2\), the same constant \(C_p\) as in
Lemma~\ref{lem:BR_Hilbert} satisfies
\begin{equation}\label{eq:transform_BR}
\E\|T_n\|^p
\le
C_p\left\{
\E\Bigl(\sum_{k=1}^n a_k^2\,\E[\|d_k\|^2\mid \mathcal F_{k-1}]\Bigr)^{p/2}
+
\E\sum_{k=1}^n |a_k|^p\,\|d_k\|^p
\right\},
\qquad n\ge 1.
\end{equation}
In particular, if \(\tau\) is a stopping time taking values in
\(\{0,1,\dots,n\}\), then $a_k:=\mathbf 1_{\{k\le \tau\}}$ is predictable, and $T_n=\sum_{k=1}^{\tau} d_k = M_\tau.$ Consequently,
\[
\E\|M_{\tau}\|^p
\le
C_p\left\{
\E\Bigl(\sum_{k=1}^{\tau}\E[\|d_k\|^2\mid \mathcal F_{k-1}]\Bigr)^{p/2}
+
\E\sum_{k=1}^{\tau}\|d_k\|^p
\right\}.
\]
\end{lemma}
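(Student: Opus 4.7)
The plan is to reduce Lemma~\ref{lem:transform_BR} to a one-line application of Lemma~\ref{lem:BR_Hilbert}, exploiting the fact that a predictable scalar transform of an MDS is again an MDS with respect to the same filtration. Define $\tilde d_k := a_k d_k$. Since $a_k$ is $\mathcal F_{k-1}$-measurable and $d_k$ is $\mathcal F_k$-measurable, $\tilde d_k$ is $\mathcal F_k$-measurable. Pulling the $\mathcal F_{k-1}$-measurable scalar out of the conditional expectation gives
\[
\E[\tilde d_k\mid\mathcal F_{k-1}] = a_k\,\E[d_k\mid\mathcal F_{k-1}] = 0,
\]
so $(\tilde d_k,\mathcal F_k)_{k\ge 1}$ is itself an $H$-valued MDS, and $T_n = \sum_{k=1}^n \tilde d_k$ is its partial sum.

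Next I would apply Lemma~\ref{lem:BR_Hilbert} directly to $\{\tilde d_k\}$ at the same exponent $p\ge 2$. The predictability of $a_k$ lets me factor both ingredients on the right-hand side: $\E[\|\tilde d_k\|^2\mid\mathcal F_{k-1}] = a_k^2\,\E[\|d_k\|^2\mid\mathcal F_{k-1}]$ and $\|\tilde d_k\|^p = |a_k|^p\|d_k\|^p$. Substitution produces exactly the inequality \eqref{eq:transform_BR} with the same constant $C_p$, since the constant in Lemma~\ref{lem:BR_Hilbert} depends only on $p$ and not on the MDS.

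For the stopping-time application, I would take $a_k := \mathbf 1_{\{k\le\tau\}}$. Predictability is the only thing to check: the event $\{k\le\tau\}$ is the complement of $\{\tau\le k-1\}\in\mathcal F_{k-1}$, so $a_k$ is $\mathcal F_{k-1}$-measurable. Because $a_k\in\{0,1\}$, we have $a_k^2=a_k=|a_k|^p$, and the transform collapses to $T_n = \sum_{k=1}^{n\wedge\tau} d_k = M_\tau$ (using $\tau\le n$). Restricting the two sums in \eqref{eq:transform_BR} to $\{k:a_k=1\}=\{1,\dots,\tau\}$ yields the stated bound on $\E\|M_\tau\|^p$.

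I do not anticipate a substantive obstacle; the result is essentially a reduction argument, and this is why the same constant $C_p$ carries over. The only point worth flagging is integrability of $\tilde d_k$, which is automatic whenever the right-hand side of \eqref{eq:transform_BR} is finite (otherwise the inequality is trivial), so no additional moment hypothesis on $(a_k)$ needs to be imposed beyond predictability.
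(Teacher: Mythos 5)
Your proposal is correct and is essentially the argument the paper intends: the paper presents Lemma~\ref{lem:transform_BR} as an immediate consequence of Lemma~\ref{lem:BR_Hilbert}, precisely via the observation that a predictable scalar transform of an MDS is again an MDS with $\E[\|a_kd_k\|^2\mid\mathcal F_{k-1}]=a_k^2\E[\|d_k\|^2\mid\mathcal F_{k-1}]$ and $\|a_kd_k\|^p=|a_k|^p\|d_k\|^p$, and the stopping-time specialization with $a_k=\mathbf 1_{\{k\le\tau\}}$ is exactly as you describe. Your integrability remark (finiteness of the right-hand side gives $\E\|a_kd_k\|<\infty$, and otherwise the bound is vacuous) is also handled correctly.
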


\subsection{Main Proof}\label{sec:proof-covariance-mismatch}

The first step relies on two auxiliary lemmas, whose proofs are deferred to
Appendices~\ref{sec:geom_tail} and~\ref{sec:iid_Mr}. We now provide the details
for the remaining two steps.

\medskip
\noindent\textbf{Details for Step~2.}
For \(i \ge 1\), define
\[
\widetilde M_i^{(o)} := M_{T_{2i-1}} - M_{T_{2i-2}},
\qquad
\widetilde M_i^{(e)} := M_{T_{2i}} - M_{T_{2i-1}},
\]
and let
\[
\mathcal G_i^{(o)} := \mathcal F_{T_{2i-1}},
\qquad
\mathcal G_i^{(e)} := \mathcal F_{T_{2i}},
\qquad i \ge 1.
\]
For completeness, we also set
\[
\mathcal G_0^{(o)} := \mathcal F_{T_0},
\qquad
\mathcal G_0^{(e)} := \mathcal F_{T_0}.
\]
By Lemma~\ref{lem:iid_Mr}, the sequence \(\{\widetilde M_i\}_{i\ge1}\) is
\(1\)-dependent and centered. In addition, for each \(i\ge1\),
\(\widetilde M_i^{(o)}\) is independent of \(\mathcal G_{i-1}^{(o)}\), and
\(\widetilde M_i^{(e)}\) is independent of \(\mathcal G_{i-1}^{(e)}\). Hence,
\[
\E\!\left[\widetilde M_i^{(o)} \mid \mathcal G_{i-1}^{(o)}\right]=0,
\qquad
\E\!\left[\widetilde M_i^{(e)} \mid \mathcal G_{i-1}^{(e)}\right]=0,
\qquad i\ge1,
\]
so both
\[
\bigl(\widetilde M_i^{(o)}, \mathcal G_i^{(o)}\bigr)_{i\ge1}
\qquad\text{and}\qquad
\bigl(\widetilde M_i^{(e)}, \mathcal G_i^{(e)}\bigr)_{i\ge1}
\]
are martingale difference sequences. Moreover,
\begin{equation}\label{eq:uniformconditional}
\sup_{i\ge1}\E\bigl[\|\widetilde M_i^{(o)}\|^p \mid \mathcal G_{i-1}^{(o)}\bigr]
< \infty,
\qquad
\sup_{i\ge1}\E\bigl[\|\widetilde M_i^{(e)}\|^p \mid \mathcal G_{i-1}^{(e)}\bigr]
< \infty.
\end{equation}
Next, define
\[
\tau_n^{(o)} := \Big\lfloor \frac{K_n+1}{2} \Big\rfloor,
\qquad
\tau_n^{(e)} := \Big\lfloor \frac{K_n}{2} \Big\rfloor,
\qquad
m_n^{(o)} := \Big\lfloor \frac{k_n+1}{2} \Big\rfloor,
\qquad
m_n^{(e)} := \Big\lfloor \frac{k_n}{2} \Big\rfloor.
\]
Then
\[
\widetilde S_n^{(o)} = \sum_{i=1}^{\tau_n^{(o)}} \widetilde M_i^{(o)},
\qquad
\widetilde S_n^{(e)} = \sum_{i=1}^{\tau_n^{(e)}} \widetilde M_i^{(e)},
\qquad
\widetilde S_n = \widetilde S_n^{(o)} + \widetilde S_n^{(e)},
\]
and similarly
\[
\bar S_n^{(o)} = \sum_{i=1}^{m_n^{(o)}} \widetilde M_i^{(o)},
\qquad
\bar S_n^{(e)} = \sum_{i=1}^{m_n^{(e)}} \widetilde M_i^{(e)},
\qquad
\bar S_n = \bar S_n^{(o)} + \bar S_n^{(e)}.
\]
Introduce the coefficients
\[
c_i^{(o)} := \mathbf{1}_{\{i \le \tau_n^{(o)}\}} - \mathbf{1}_{\{i \le m_n^{(o)}\}},
\qquad
c_i^{(e)} := \mathbf{1}_{\{i \le \tau_n^{(e)}\}} - \mathbf{1}_{\{i \le m_n^{(e)}\}},
\qquad i \ge 1.
\]
Then \(c_i^{(o)}, c_i^{(e)} \in \{-1,0,1\}\) for all \(i \ge 1\), and
\[
\widetilde S_n - \bar S_n
=
\bigl(\widetilde S_n^{(o)} - \bar S_n^{(o)}\bigr)
+
\bigl(\widetilde S_n^{(e)} - \bar S_n^{(e)}\bigr).
\]
Hence, by the triangle inequality,
\[
\|\widetilde S_n - \bar S_n\|_{L^p}
\le
\|\widetilde S_n^{(o)} - \bar S_n^{(o)}\|_{L^p}
+
\|\widetilde S_n^{(e)} - \bar S_n^{(e)}\|_{L^p}.
\]
Applying the Rosenthal--Burkholder inequality
(Lemmas~\ref{lem:BR_Hilbert} and~\ref{lem:transform_BR}) together with
\eqref{eq:uniformconditional}, we obtain
\[
\bigl\|\widetilde S_n^{(o)} - \bar S_n^{(o)}\bigr\|_{L^p}^p
=
\Big\|\sum_{i\ge1} c_i^{(o)} \widetilde M_i^{(o)}\Big\|_{L^p}^p
\lesssim
\E\bigl| \tau_n^{(o)} - m_n^{(o)} \bigr|^{p/2}
+
\E\bigl| \tau_n^{(o)} - m_n^{(o)} \bigr|.
\]
Since
\[
\bigl| \tau_n^{(o)} - m_n^{(o)} \bigr| \le |K_n-k_n|+1,
\]
Lemma~\ref{lem:K} with \(q=p/2\) and \(q=1\) yields
\[
\E|K_n-k_n|^{p/2} = \mathcal O\bigl(n^{p/4}\bigr),
\qquad
\E|K_n-k_n| = \mathcal O(\sqrt n),
\]
and therefore $\|\widetilde S_n^{(o)} - \bar S_n^{(o)}\|_{L^p}
= \mathcal O\bigl(n^{1/4}\bigr).$ The same argument applies to the even part, so $\|\widetilde S_n^{(e)} - \bar S_n^{(e)}\|_{L^p}
= \mathcal O\bigl(n^{1/4}\bigr).$ Consequently,
\[
\|\widetilde S_n - \bar S_n\|_{L^p}
= \mathcal O\bigl(n^{1/4}\bigr),
\qquad
\W_p\!\left(
\law\!\left(\widetilde S_n/\sqrt{n}\right),
\law\!\left(\bar S_n/\sqrt{n}\right)
\right)
= \mathcal O\bigl(n^{-1/4}\bigr).
\]

\medskip
\noindent\textbf{Details for Step~3.}
By Assumption~\ref{assumption:markovchain} and the martingale decomposition
\eqref{eq:Sn-mart-tv},
\begin{align*}
\E\|S_n\|^2
=
\E\|g(x_0)-g(x_n)+M_n\|^2
\lesssim \E\|g(x_0)-g(x_n)\|^2 + \E\|M_n\|^2 \lesssim 1 + \sum_{t=0}^{n-1}\E\|\xi_{t+1}\|^2 \lesssim n,
\end{align*}
where we used \(\sup_{t\ge0}\|g(x_t)\|_{L^2}<\infty\) and the orthogonality of
martingale differences:
\[
\E\|M_n\|^2 = \sum_{t=0}^{n-1}\E\|\xi_{t+1}\|^2.
\]
Similarly, by Lemma~\ref{lem:transform_BR} with \(p=2\) and
Lemma~\ref{lem:K},
\[
\E\|\widetilde S_n\|^2
\lesssim
\E\|\widetilde S_n^{(o)}\|^2 + \E\|\widetilde S_n^{(e)}\|^2
\lesssim
\E[K_n]
\lesssim n,
\]
where we used the martingale difference structure of the odd and even
subsequences.

Now set
\[
A_n := S_n - \E[S_n],
\qquad
B_n := \widetilde S_n - \E[\widetilde S_n].
\]
Then
\[
A_nA_n^\top - B_nB_n^\top
=
(A_n-B_n)A_n^\top + B_n(A_n-B_n)^\top.
\]
Therefore, using \eqref{eq:triangle-reminder} and the Cauchy--Schwarz
inequality,
\begin{align*}
\|\operatorname{Var}(S_n) - \operatorname{Var}(\widetilde S_n)\|
=
\bigl\|\E[A_nA_n^\top] - \E[B_nB_n^\top]\bigr\| &\le
\bigl\|\E[(A_n-B_n)A_n^\top]\bigr\|
+
\bigl\|\E[B_n(A_n-B_n)^\top]\bigr\| \\
&\lesssim
\|A_n-B_n\|_{L^2}\bigl(\|A_n\|_{L^2}+\|B_n\|_{L^2}\bigr) \\
&\lesssim
\|S_n-\widetilde S_n\|_{L^2}
\bigl(\|S_n\|_{L^2}+\|\widetilde S_n\|_{L^2}\bigr) \lesssim \sqrt n.
\end{align*}
It follows that
\[
\left\|
\Sigma_n - \frac{1}{n}\operatorname{Var}(\widetilde S_n)
\right\|
= \mathcal O\bigl(n^{-1/2}\bigr).
\]
Next, observe that
\[
\widetilde S_n
=
\sum_{i=1}^{K_n}\widetilde M_i
=
\sum_{i=1}^{\infty}\mathbf{1}_{\{K_n\ge i\}}\widetilde M_i.
\]
Since the event \(\{K_n \ge i\}\) depends only on the past up to time
\(T_{i-2}\), equation~\eqref{eq:martingaledifference} implies that
\[
\operatorname{Var}(\widetilde S_n)
=
\E\!\left[\sum_{i=1}^{K_n}\widetilde M_i\widetilde M_i^\top\right].
\]
Hence,
\begin{align*}
\|\operatorname{Var}(\widetilde S_n)-\operatorname{Var}(\bar S_n)\|
=
\left\|
\E\!\left[
\sum_{i=1}^{K_n}\widetilde M_i\widetilde M_i^\top
-
\sum_{i=1}^{k_n}\widetilde M_i\widetilde M_i^\top
\right]
\right\| &\le
\|\widetilde S_n^{(o)}-\bar S_n^{(o)}\|_{L^2}^2
+
\|\widetilde S_n^{(e)}-\bar S_n^{(e)}\|_{L^2}^2 \\
&\lesssim \E|K_n-k_n|
\lesssim \sqrt n,
\end{align*}
where in the last step we again used Lemma~\ref{lem:K}. Consequently,
\[
\left\|
\frac{1}{n}\operatorname{Var}(\widetilde S_n)
-
\frac{1}{n}\operatorname{Var}(\bar S_n)
\right\|
=
\mathcal O\bigl(n^{-1/2}\bigr).
\]
Combining this estimate with the bound
\[
\left\|
\Sigma_n - \frac{1}{n}\operatorname{Var}(\widetilde S_n)
\right\|
=
\mathcal O\bigl(n^{-1/2}\bigr)
\]
proved above yields \eqref{eq:cov-mismatch}, and hence completes the proof of
Theorem~\ref{thm:MC-Wp}.

\subsection{Proof of Lemma \ref{lem:geom_tail}}\label{sec:geom_tail}
We first record two auxiliary lemmas, whose proofs are deferred to Sections~\ref{sec:returntime-m} and~\ref{sec:entrance-mgf}.
\begin{lemma}\label{lem:returntime-m}
Under Assumption~\ref{assumption:markovchain} and conditions \eqref{eq:return}
and \eqref{eq:M}, there exists \(\bar\kappa>1\) such that
\[
\bar M(a):=\sup_{x\in\bar C}\E_x\!\left[a^{\sigma_{\bar C,m}}\right]<\infty,
\qquad a\in(1,\bar\kappa],
\]
where
\[
\sigma_{\bar C,m}:=\inf\{t\ge1:x_{tm}\in\bar C\}.
\]
\end{lemma}

\begin{lemma}\label{lem:entrance-mgf}
Let \(\{x^{(m)}_t\}_{t\ge0}\) be the \(m\)-skeleton. Under the geometric-drift
condition for \(P\) with petite set \(C\) and Lyapunov function \(V\ge1\), there
exist constants \(s_0>1\) and \(K_{\mathrm{in}}<\infty\) such that, for
\[
T_{\bar C}:=\inf\{t\ge0:x_{tm}\in\bar C\},
\]
we have
\[
\E\!\left[s^{T_{\bar C}}\right]\le K_{\mathrm{in}}\E[V(x_0)],
\qquad 1<s\le s_0.
\]
\end{lemma}

Let \(Q:=P^m\), and let \(\Phi=(X_k,Y_k)_{k\ge0}\) be the split chain for \(Q\) with
atom $\mathcal A:=\bar C\times\{1\}.$

Rewrite
\[
r_1:=\inf\{k\ge0:(X_k,Y_k)\in\mathcal A\},\qquad
r_i:=\inf\{k>r_{i-1}:(X_k,Y_k)\in\mathcal A\},\quad i\ge2.
\]
Then, the cycle lengths satisfy
\[
L_1=r_1+1,\qquad L_i=r_i-r_{i-1},\quad i\ge2.
\]
By the strong Markov property at the atom, \(\{L_i\}_{i\ge1}\) are independent and
\(\{L_i\}_{i\ge2}\) are i.i.d.

By Lemma~\ref{lem:returntime-m}, we have \(\bar M(a)\downarrow 1\) as \(a\downarrow 1\), and hence \(H(a)\downarrow 1\) as well. Therefore, we may choose \(a>1\) sufficiently close to \(1\) such that
\begin{equation}\label{eq:a}
a\in(1,\min\{s_0,\bar\kappa\}),
\qquad
(1-\beta)\bar M(a)<1.
\end{equation}
\noindent
\textbf{Tail for \(i\ge2\).}
Fix \(i\ge2\). At time \(r_{i-1}+1\), the regeneration state
\(X_{r_{i-1}+1}\) has law \(\nu\), hence lies in \(\bar C\) almost surely. Let
\(J_i\) be the number of visits of the skeleton to \(\bar C\), starting from time
\(r_{i-1}+1\), until the first successful split coin. Then
\[
J_i\sim\mathrm{Geom}(\beta)\qquad\text{on }\{1,2,\dots\}.
\]
If \(\eta_{i,1},\eta_{i,2},\dots\) denote the successive failed excursion lengths,
then
\[
L_i=1+\sum_{j=1}^{J_i-1}\eta_{i,j}.
\]
Consequently, using \eqref{eq:a} and Lemma \ref{lem:returntime-m},
\[
\E[a^{L_i}]
 \leq a \mathbb{E}\left[(\bar{M}(a))^{J_i-1}\right]= \frac{a\beta}{1-(1-\beta) \bar{M}(a)}<\infty
\qquad i\ge2.
\]
Hence, by Markov's inequality,
\[
\P(L_i>\ell)\le \frac{a\beta}{1-(1-\beta) \bar{M}(a)} a^{-\ell},
\qquad \ell\ge0,\ \ i\ge2.
\]

\medskip
\noindent
\textbf{Tail for \(i=1\).}
After time \(T_{\bar C}\), the remainder of the first cycle has the same structure
as above. More precisely, if \(J\sim\mathrm{Geom}(\beta)\) is the number of visits
to \(\bar C\) until the first successful split coin, and \(\eta_1,\eta_2,\dots\)
are the corresponding failed excursion lengths, then
\[
L_1=1+ T_{\bar C}+\sum_{j=1}^{J-1}\eta_j.
\]
By Lemma~\ref{lem:entrance-mgf},
\[
\E[a^{T_{\bar C}}]\le K_{\mathrm{in}}\E[V(x_0)].
\]
Conditioning on \(\mathcal F_{T_{\bar C}}\) and using the same estimate as above,
we obtain
\[
\E[a^{L_1}]
\le \E[a^{T_{\bar C}}]\,F(a)
\le \frac{a\beta K_{\mathrm{in}}\E[V(x_0)]}{1-(1-\beta) \bar{M}(a)}.
\]
Therefore,
\[
\P(L_1>\ell)\le \frac{a\beta K_{\mathrm{in}}\E[V(x_0)]}{1-(1-\beta) \bar{M}(a)} a^{-\ell},
\qquad \ell\ge0.
\]
Finally, set
\[
\rho:=a^{-1}\in(0,1),
\qquad
b:=\max\Bigl\{\frac{a\beta }{1-(1-\beta) \bar{M}(a)},\frac{a\beta K_{\mathrm{in}}\E[V(x_0)]}{1-(1-\beta) \bar{M}(a)}\Bigr\}.
\]
Then
\[
\P(L_i>\ell)\le b\,\rho^\ell,
\qquad i\ge1,\ \ell\ge0.
\]
This completes the proof of Lemma \ref{lem:geom_tail}.
\subsubsection{Proof of Lemma \ref{lem:returntime-m}}\label{sec:returntime-m}
Choose any \(b\in(1,\kappa]\), and let $M(b):=\sup_{x\in\bar C}\E_x\!\left[b^{\sigma_{\bar C}}\right]<\infty.$ Let \((\mathcal F_t)_{t\ge0}\) be the natural filtration, and define the successive
ordinary return times to \(\bar C\) by
\[
\tau_0:=0,
\qquad
\tau_j:=\inf\{n>\tau_{j-1}:x_n\in\bar C\},
\qquad j\ge1.
\]
Thus \(\tau_1=\sigma_{\bar C}\). By the strong
Markov property, we have
\begin{equation}\label{eq:tauj-moment-clean}
\sup_{x\in\bar C}\E_x\!\left[b^{\tau_j}\right]\le M(b)^j,
\qquad j\ge0.
\end{equation}
Next, Lemma~\ref{lem:small-set-return}, together with Theorem~15.1.5 and Corollary~15.1.4 of \cite{douc2018markov}, yields constants \(\varsigma<\infty\) and \(\varrho\in(0,1)\) such that
\[
\sup_{x\in\bar C}\bigl|P^n(x,\bar C)-\pi(\bar C)\bigr|
\le \varsigma \varrho^n,
\qquad n\ge0.
\]
Since \(\bar C\) is accessible, \(\pi(\bar C)>0\). Choose \(n_0\ge1\) so large that $\varsigma \varrho^{n_0}\le \frac{\pi(\bar C)}{2},$ and set $\varepsilon:=\frac{\pi(\bar C)}{2}>0.$ Then, for every \(n\ge n_0\),
\begin{equation}\label{eq:uniform-large-n-clean}
\inf_{x\in\bar C}P^n(x,\bar C)
\ge \pi(\bar C)-\varsigma\varrho^n
\ge \varepsilon.
\end{equation}
Set $R:=n_0+m-1.$ For each residue class \(r\in\{0,1,\dots,m-1\}\), let \(\ell(r)\) be the unique
integer in \(\{n_0,\dots,R\}\) such that $r+\ell(r)\equiv 0\pmod m.$ For \(j\ge0\), define $A_j:=\{x_{\tau_j+\ell(\tau_j\bmod m)}\in\bar C\}.$
Since \(\ell(r)\in\{n_0,\dots,R\}\), the strong Markov property at \(\tau_j\) and
\eqref{eq:uniform-large-n-clean} imply
\begin{align}
\P_x(A_j\mid\mathcal F_{\tau_j})
=
\sum_{r=0}^{m-1}\mathbf 1_{\{\tau_j\equiv r\ (\mathrm{mod}\ m)\}}
\,P^{\ell(r)}(x_{\tau_j},\bar C) \ge \varepsilon
\qquad\text{a.s.} \label{eq:Aj-prob-clean}
\end{align}
Moreover,
\[
A_j
=
\bigcup_{r=0}^{m-1}
\Bigl(\{\tau_j\equiv r\ (\mathrm{mod}\ m)\}\cap
\{x_{\tau_j+\ell(r)}\in\bar C\}\Bigr)
\in \mathcal F_{\tau_j+R}.
\]
Since each increment \(\tau_{k+1}-\tau_k\ge1\), we have \(\tau_{j+R}\ge \tau_j+R\),
and hence
\begin{equation}\label{eq:Aj-measurable-clean}
A_j\in \mathcal F_{\tau_{j+R}}.
\end{equation}
Now look only at every \(R\)-th ordinary return. Define
\[
B_q:=A_{qR},
\qquad
K:=\inf\{q\ge0:B_q\text{ occurs}\}.
\]
By \eqref{eq:Aj-measurable-clean}, \(B_q\in\mathcal F_{\tau_{(q+1)R}}\), so
\[
\{K\ge q\}=\bigcap_{i=0}^{q-1}B_i^c\in \mathcal F_{\tau_{qR}}.
\]
Using \eqref{eq:Aj-prob-clean} with \(j=qR\), we get
\[
\P_x(B_q^c\mid\mathcal F_{\tau_{qR}})\le 1-\varepsilon
\qquad\text{a.s.}
\]
Therefore,
\[
\P_x(K\ge q+1)
=
\E_x\!\left[\mathbf 1_{\{K\ge q\}}\mathbf 1_{B_q^c}\right]
\le (1-\varepsilon)\P_x(K\ge q),
\]
and hence, by induction,
\begin{equation}\label{eq:K-tail-clean}
\P_x(K\ge q)\le (1-\varepsilon)^q,
\qquad q\ge0.
\end{equation}
If \(K=q\), then \(B_q=A_{qR}\) occurs, so
\[
x_{\tau_{qR}+\ell(\tau_{qR}\bmod m)}\in\bar C,
\qquad
\tau_{qR}+\ell(\tau_{qR}\bmod m)\equiv 0\pmod m.
\]
Thus \(\tau_{qR}+\ell(\tau_{qR}\bmod m)\) is a positive multiple of \(m\) at which
the chain is in \(\bar C\), and therefore
\[
m\,\sigma_{\bar C,m}\le \tau_{qR}+\ell(\tau_{qR}\bmod m)\le \tau_{qR}+R.
\]
Since this holds on \(\{K=q\}\) for every \(q\ge0\), we conclude that
\begin{equation}\label{eq:sigma-vs-tauKR-clean}
m\,\sigma_{\bar C,m}\le \tau_{KR}+R
\qquad\text{a.s.}
\end{equation}
Finally, choose \(p>1\) so large that
\[
\theta:=M(b)^{R/p}(1-\varepsilon)^{1-1/p}<1,
\]
and define $\bar\kappa:=b^{m/p}>1.$ By \eqref{eq:sigma-vs-tauKR-clean}, $\bar\kappa^{\sigma_{\bar C,m}}
=
b^{m\sigma_{\bar C,m}/p}
\le b^{R/p}b^{\tau_{KR}/p}.$ Hence, for every \(x\in\bar C\),
\begin{align*}
\E_x\!\left[\bar\kappa^{\sigma_{\bar C,m}}\right]
\le
b^{R/p}\sum_{q=0}^\infty
\E_x\!\left[b^{\tau_{qR}/p}\mathbf 1_{\{K=q\}}\right] \le
b^{R/p}\sum_{q=0}^\infty
\E_x\!\left[b^{\tau_{qR}/p}\mathbf 1_{\{K\ge q\}}\right].
\end{align*}
Applying H\"older's inequality with exponents \(p\) and \(p/(p-1)\), and then
using \eqref{eq:tauj-moment-clean} and \eqref{eq:K-tail-clean}, we obtain
\begin{align*}
\E_x\!\left[b^{\tau_{qR}/p}\mathbf 1_{\{K\ge q\}}\right]
&\le
\left(\E_x\!\left[b^{\tau_{qR}}\right]\right)^{1/p}
\left(\P_x(K\ge q)\right)^{1-1/p} \le
M(b)^{qR/p}(1-\varepsilon)^{q(1-1/p)}
=
\theta^q.
\end{align*}
Therefore,
\[
\sup_{x\in\bar C}\E_x\!\left[\bar\kappa^{\sigma_{\bar C,m}}\right]
\le
b^{R/p}\sum_{q=0}^\infty \theta^q
<\infty.
\]
Since \(a^{\sigma_{\bar C,m}}\le \bar\kappa^{\sigma_{\bar C,m}}\) whenever
\(1<a\le \bar\kappa\), it follows that
\[
\sup_{x\in\bar C}\E_x\!\left[a^{\sigma_{\bar C,m}}\right]<\infty,
\qquad 1<a\le \bar\kappa.
\]
This proves Lemma \ref{lem:returntime-m}.
\subsubsection{Proof of Lemma \ref{lem:entrance-mgf}}\label{sec:entrance-mgf}
Let $Q:=P^m$ and $X_t:=x_{tm}$ for $t\ge0$. Then, 
\[
T_{\bar C}=\inf\{t\ge0:X_t\in\bar C\}.
\]
Since
\[
QV=P^mV\le \lambda^mV+b_m,
\qquad
b_m:=L\sum_{j=0}^{m-1}\lambda^j\le \frac{L}{1-\lambda},
\]
we may choose \(\rho\in(\lambda^m,1)\) and \(R>0\) such that
\[
QV\le \rho V \quad \text{on}\quad D^c,
\qquad
D:=\{V\le R\}.
\]
Let $T_D:=\inf\{t\ge0:X_t\in D\}.$ Then
\[
M_t:=\rho^{-(t\wedge T_D)}V(X_{t\wedge T_D}),\qquad t\ge0,
\]
is a nonnegative supermartingale. Hence, for every \(x\in\mathcal X\) and \(t\ge0\),
\[
\P_x(T_D>t)\le \rho^t V(x).
\]
Therefore, for every \(1<s<\rho^{-1}\),
\begin{equation}\label{eq:TD-mgf}
\E_x[s^{T_D}]
=
1+(s-1)\sum_{t=0}^\infty s^t\P_x(T_D>t)
\le \frac{s}{1-s\rho}V(x).
\end{equation}
Since \(Q\) is \(V\)-uniformly geometrically ergodic with invariant measure \(\pi\),
there exist constants \(C<\infty\) and \(\alpha\in(0,1)\) such that 
\[
\|Q^n(x,\cdot)-\pi\|_V\le C V(x)\alpha^n,
\qquad x\in\mathcal X,\ n\ge0.
\]
Because \(V\ge1\), this implies
\[
\|Q^n(x,\cdot)-\pi\|_{\mathrm{TV}}\le C V(x)\alpha^n.
\]
Since \(\bar C\) is accessible, \(\pi(\bar C)>0\). Choose \(r\ge1\) so large that $CR\alpha^r\le \frac{\pi(\bar C)}{2}.$ Then, for every \(x\in D\),
\begin{equation}\label{eq:Qr-hit-short}
Q^r(x,\bar C)
\ge \pi(\bar C)-\|Q^r(x,\cdot)-\pi\|_{\mathrm{TV}}
\ge \frac{\pi(\bar C)}{2}
=:p>0.
\end{equation}
Now define
\[
G(s):=\sup_{x\in D}\E_x[s^{T_{\bar C}}],
\qquad 1<s<\rho^{-1}.
\]
For \(x\in D\), the pathwise bounds
\[
T_{\bar C}\le r+\mathbf 1_{\{X_r\notin\bar C\}}\,T_{\bar C}\circ\theta_r,
\qquad
T_{\bar C}\le T_D+T_{\bar C}\circ\theta_{T_D},
\]
together with the strong Markov property, yield
\[
\E_x[s^{T_{\bar C}}]
\le
s^r+s^r\E_x\!\left[
\mathbf 1_{\{X_r\notin\bar C\}}\E_{X_r}[s^{T_D}]
\right]G(s).
\]
Set $B_r:=\sup_{x\in D}Q^rV(x).$ By iterating \(QV\le \lambda^mV+b_m\),
\[
B_r\le \lambda^{mr}R+b_m\sum_{j=0}^{r-1}\lambda^{mj}<\infty.
\]
Using \eqref{eq:TD-mgf} and \eqref{eq:Qr-hit-short}, we obtain
\[
\sup_{x\in D}
\E_x\!\left[
\mathbf 1_{\{X_r\notin\bar C\}}\E_{X_r}[s^{T_D}]
\right]
\le
1-p+\frac{s-1}{1-s\rho}B_r
=:c(s).
\]
Hence
\[
G(s)\le s^r+s^rc(s)\,G(s).
\]
Since \(c(s)\to 1-p<1\) as \(s\downarrow1\), we may choose
\(s_0\in(1,\rho^{-1})\) so close to \(1\) that $\eta:=\sup_{1<s\le s_0}s^rc(s)<1.$ Then, for every \(1<s\le s_0\),
\[
G(s)\le \frac{s^r}{1-s^rc(s)}
\le \frac{s_0^r}{1-\eta}
=:G_0<\infty.
\]
Finally, for arbitrary \(x\in\mathcal X\), the strong Markov property at \(T_D\)
and \eqref{eq:TD-mgf} give, for \(1<s\le s_0\),
\[
\E_x[s^{T_{\bar C}}]
\le \E_x[s^{T_D}]\,G(s)
\le \frac{s}{1-s\rho}V(x)\,G_0
\le \frac{s_0}{1-s_0\rho}G_0\,V(x).
\]
Thus, with $K_{\mathrm{in}}:=\frac{s_0}{1-s_0\rho}G_0<\infty,$ we have
\[
\E_x[s^{T_{\bar C}}]\le K_{\mathrm{in}}V(x),
\qquad x\in\mathcal X,\ \ 1<s\le s_0.
\]
Integrating over the initial law of \(x_0\) yields
\[
\E[s^{T_{\bar C}}]\le K_{\mathrm{in}}\E[V(x_0)],
\qquad 1<s\le s_0,
\]
which proves Lemma \ref{lem:entrance-mgf}.
\subsection{Proof of Lemma~\ref{lem:iid_Mr}}\label{sec:iid_Mr}
Recall that 
\[
g:=\sum_{k=0}^\infty P^k h
\qquad\text{satisfies}\qquad
\|g(x)\|\lesssim V(x)^{1/r},
\]
where \(r>2\). Define
\[
H(x,x'):=g(x')-Pg(x).
\]
Then
\begin{equation}\label{eq:H-bound}
\|H(x,x')\|
\le \|g(x')\|+\|Pg(x)\|
\lesssim V(x')^{1/r}+V(x)^{1/r},
\end{equation}
and hence
\begin{equation}\label{eq:H-r-bound}
\|H(x,x')\|^r \lesssim V(x)+V(x').
\end{equation}
For \(i\ge1\) and \(j\ge0\), let
\[
B_{i,j}:=T_{i-1}+jm,
\]
so that the \(i\)-th cycle is decomposed into \(m\)-blocks indexed by
\(j=0,\dots,L_i-1\). For \(0\le s\le m\), set
\[
a_{i,j}^{(s)}
:=
\E\!\left[V(x_{B_{i,j}+s})\,\mathbf 1_{\{L_i>j\}}\right].
\]
We first show that there exist constants \(C<\infty\) and \(\eta\in(0,1)\) such
that
\begin{equation}\label{eq:cycle-V-bound}
a_{i,j}^{(s)}\le C\eta^j,
\qquad
i\ge1,\ j\ge0,\ 0\le s\le m.
\end{equation}
Indeed, since \(V\ge1\) is finite-valued and
\[
P^m(x,\cdot)\ge \beta \nu(\cdot),\qquad x\in \bar C,
\]
we have
\begin{equation}\label{eq:nuV-upper}
\beta \nu(V)\le P^mV(x)\le \lambda^mV(x)+\frac{L}{1-\lambda},
\qquad \forall x\in \bar C.
\end{equation}
It remains to justify that $\bar C$ contains at least one point at which $V$ is finite.
Suppose for contradiction that $V(x)=\infty$ for all $x\in\bar C$.
Since $\pi(V)<\infty$ by Lemma \ref{lem:pi(V)}, this would force $\pi(\bar C)=0$.
Now use accessibility of $\bar C$: for each $n\ge1$ define
\[
  A_n:=\{x\in\mathcal X:\ P^n(x,\bar C)>0\}.
\]
Accessibility implies $\bigcup_{n\ge1}A_n=\mathcal X$.
On the other hand, by invariance of $\pi$ we have, for every $n\ge1$,
\[
  \pi(\bar C)=\pi P^n(\bar C)
  =\int_{\mathcal X} P^n(x,\bar C)\pi(\d x).
\]
If $\pi(\bar C)=0$, then the nonnegativity of $P^n(x,\bar C)$ implies
$P^n(x,\bar C)=0$ for $\pi$-a.e.\ $x$, i.e.\ $\pi(A_n)=0$ for all $n\ge1$.
Consequently,
\[
  1=\pi(\mathcal X)
  =\pi\Big(\bigcup_{n\ge1}A_n\Big)
  \le \sum_{n\ge1}\pi(A_n)
  =0,
\]
a contradiction. Therefore, there exists $x^*\in\bar C$ such that $V(x^*)<\infty$.
Plugging $x=x^*$ into \eqref{eq:nuV-upper} yields
\[
  \nu(V)
  \ \le\ \frac{\lambda^m V(x^*) + \frac{L}{1-\lambda}}{\beta}
  \ <\ \infty.
\]
Thus
\[
a_{1,0}^{(0)}=\E[V(x_0)]<\infty,
\qquad
a_{i,0}^{(0)}=\nu(V)<\infty,\quad i\ge2,
\]
so \(\sup_{i\ge1}a_{i,0}^{(0)}<\infty\).

Next, using the Markov property at time \(B_{i,j}\), the drift condition, and
Lemma~\ref{lem:geom_tail},
\begin{align*}
a_{i,j+1}^{(0)}
=
\E\!\left[V(x_{B_{i,j+1}})\mathbf 1_{\{L_i>j+1\}}\right] \le
\E\left[\mathbf 1_{\{L_i>j\}}P^mV(x_{B_{i,j}})\right] &\le
\lambda^m a_{i,j}^{(0)}
+
\frac{L}{1-\lambda}\,\P(L_i>j) \\
&\le
\lambda^m a_{i,j}^{(0)}+C_0\rho^j.
\end{align*}
Since \(\lambda^m\in(0,1)\) and \(\rho\in(0,1)\), iterating this recursion yields
constants \(C_1<\infty\) and \(\eta\in(0,1)\) such that
\[
a_{i,j}^{(0)}\le C_1\eta^j,
\qquad i\ge1,\ j\ge0.
\]
Finally, for \(0\le s\le m\),
\begin{align*}
a_{i,j}^{(s)}
&=
\E\left[\mathbf 1_{\{L_i>j\}}\,P^sV(x_{B_{i,j}})\right] \le
\lambda^s a_{i,j}^{(0)}+\frac{L}{1-\lambda}\,\P(L_i>j)
\le C\eta^j,
\end{align*}
which proves \eqref{eq:cycle-V-bound}.

For \(i\ge1\), \(j\ge0\), and \(0\le s\le m-1\), define the tail of the
\(i\)-th cycle started at the within-block position \((j,s)\) by
\[
U_{i,j,s}
:=
\sum_{t=B_{i,j}+s}^{T_i-1}\xi_{t+1},
\qquad
\xi_{t+1}=H(x_t,x_{t+1}).
\]
In particular, $\widetilde M_i=U_{i,0,0}.$ Using Minkowski's inequality together with \eqref{eq:H-r-bound}, we obtain
\begin{align*}
\Bigl(\E\|U_{i,j,s}\|^r\Bigr)^{1/r}
\le
\sum_{\ell\ge j}\sum_{u=0}^{m-1}
\Bigl(
\E\bigl[\|\xi_{B_{i,\ell}+u+1}\|^r\mathbf 1_{\{L_i>\ell\}}\bigr]
\Bigr)^{1/r} \lesssim
\sum_{\ell\ge j}\sum_{u=0}^{m-1}
\Bigl(
a_{i,\ell}^{(u)}+a_{i,\ell}^{(u+1)}
\Bigr)^{1/r}.
\end{align*}
Hence, by \eqref{eq:cycle-V-bound},
\[
\sup_{i\ge1}\sup_{j\ge0}\sup_{0\le s\le m-1}\E\|U_{i,j,s}\|^r<\infty.
\]
Taking \(j=s=0\), we obtain
\begin{equation}\label{eq:block-r-moment}
\sup_{i\ge1}\E\|\widetilde M_i\|^r<\infty.
\end{equation}
We now turn to the remainder \(R_n\). By definition of \(K_n\), $T_{K_n-2}\le n<T_{K_n-1}<T_{K_n}.$ Thus \(n\) lies either at the regeneration time \(T_{K_n-1}\), in which case
\(R_n=\widetilde M_{K_n}\), or inside the \((K_n-1)\)-st cycle, in which case
there exist random indices \(J_n\ge0\) and \(0\le S_n\le m-1\) such that
\[
n=B_{K_n-1,J_n}+S_n, \qquad R_n=U_{K_n-1,J_n,S_n}+\widetilde M_{K_n}.
\]
Therefore, by Minkowski's inequality and \eqref{eq:block-r-moment},
\[
\sup_{n\ge1}\E\|R_n\|^r<\infty.
\]
Let \(\Phi\) denote the split chain of the \(m\)-skeleton, and let
\(\tau_0<\tau_1<\tau_2<\cdots\) be its regeneration times at the atom.
For \(i\ge1\), let \(\mathcal E_i\) be the entire original-time trajectory on
the \(i\)-th regeneration excursion, excluding the terminal regeneration time:
\[
\mathcal E_i:=\bigl(x_t\bigr)_{T_{i-1}\le t<T_i}.
\]
It is standard in the regeneration literature that the excursion sequence
\(\{\mathcal E_i\}_{i\ge1}\) is \(1\)-dependent. Moreover, for each \(i\ge1\),
the endpoint \(x_{T_i}\) is independent of the excursion \(\mathcal E_{i+2}\);
see, for example, \cite{Meyn12_book}. Because \(h\) is time-homogeneous, the block increment \(\widetilde M_i\)
depends only on the trajectory over the interval \([T_{i-1},T_i]\). Then, there exists a measurable map \(F\) such that
\[
\widetilde M_i = F\bigl(\mathcal E_i,x_{T_i}\bigr),\qquad i\ge1.
\]
Consequently, \(\widetilde M_i\) may depend on \(\widetilde M_{i+1}\) through
the shared boundary value \(x_{T_i}\), but if \(|j-i|\ge2\), then
\(\widetilde M_i\) and \(\widetilde M_j\) are measurable with respect to
collections of excursions separated by at least one full regeneration cycle.
Hence \(\{\widetilde M_i\}_{i\ge1}\) is \(1\)-dependent.

Finally, set \(\mathcal G_i:=\mathcal F_{T_i}\). Fix \(i\ge1\). Every stopped
value \(M_{n\wedge T_i}\) can be written as the sum of at most \(i-1\) complete
cycle increments together with one tail \(U_{j,\ell,s}\). Therefore, by
Minkowski's inequality and the uniform bounds established above,
\[
\sup_{n\ge0}\E\|M_{n\wedge T_i}\|^r<\infty.
\]
Since \(r>1\), the family \(\{M_{n\wedge T_i}:n\ge0\}\) is uniformly
integrable. Applying Lemma~\ref{lem:mds-tildeMi-proof} to the stopping times
\(T_{i-1}\le T_i\) yields
\begin{equation}\label{eq:martingaledifference}
\E[\widetilde M_i\mid \mathcal G_{i-1}]=0,
\qquad\text{and hence}\qquad
\E[\widetilde M_i]=0.
\end{equation}
This completes the proof.

\end{document}